\newcommand{\rar}{\rightarrow}
\newcommand{\C}{\mathbb{C}}
\newcommand{\cat}[1]{\mathcal{#1}}
\newcommand{\Aut}{\textnormal{Aut}}
\newcommand{\id}{\textnormal{id}}
\newcommand{\End}{\textnormal{End}}
\newcommand{\ev}{\textnormal{ev}}
\newcommand{\Vect}{\mathbf{Vect}}
\newcommand{\tn}[1]{\textnormal{#1}}
\newcommand{\cattens}[1]{\mathop{\boxtimes}\limits_{#1}}
\newcommand{\lincat}{\mathbf{LinCat}}
\newcommand{\Rep}{\mathbf{Rep}}
\newcommand{\bxc}[1]{\cattens{\cat{#1}}}
\newcommand{\raru}[1]{\mathop{\rar}\limits^{#1}}
\newcommand{\bx}{\boxtimes}
\newcommand{\tens}[1]{\mathop{\otimes}\limits_{#1}}
\newcommand{\rlaru}[1]{\mathop{\longrightarrow}\limits^{#1}}
\newcommand{\dcentcat}[1]{\cat{Z}(\cat{#1})}
\newcommand{\SA}{\cat{O}(\cat{A})}
\newcommand{\ZAXBT}{\dcentcat{A}\tn{-}\mathbf{XBT}}
\newcommand{\BFCA}{\mathbf{BFC}/\cat{A}}
\newcommand{\GXBT}{G\tn{-}\mathbf{XBT}}
\newcommand{\rt}{\cattens{\tn{red}}^\cat{A}}
\newcommand{\MC}[2]{\cat{Z}_2(\cat{#1},\cat{#2})}
\newcommand{\mme}{\tn{MME}}
\newcommand{\DE}{\mathbf{DE}}
\newcommand{\BTCA}{\mathbf{BTC}(\cat{A})}
\newcommand{\MME}[1]{\tn{MME}(\cat{#1},\cat{A})}
\newcommand{\etalchar}[1]{$^{#1}$}
\newtheorem{thm}{Theorem}
\newtheorem{cor}[thm]{Corollary}
\newtheorem{lem}[thm]{Lemma}
\newtheorem{prop}[thm]{Proposition}
\theoremstyle{definition}
\newtheorem{ex}[thm]{Example}
\newtheorem{df}[thm]{Definition}
\newtheorem{rmk}[thm]{Remark}
\newtheorem{notation}[thm]{Notation}
\title{The Reduced Tensor Product of Braided Tensor Categories containing a Symmetric Fusion Category}
\author{Thomas A. Wasserman}
\begin{document}
	\maketitle
	\begin{abstract}
		We constuct the reduced tensor product $\rt$: a symmetric monoidal structure on the 2-category $\BTCA$ of braided tensor categories containing a fixed symmetric fusion subcategory $\cat{A}$. The construction only depends on the braiding and monoidal structure of the categories involved. The main tool in the construction is an enriching 2-functor that is shown to give an equivalence between $\BTCA$ and a 2-category $\ZAXBT$ of so-called Drinfeld centre crossed braided tensor categories.
		
		As an application of the reduced tensor product we give a pairing between minimal modular extensions of braided tensor categories containing $\cat{A}$ as their transparent subcategory.
	\end{abstract}
	\tableofcontents

\section{Introduction}
\subsection{Aim}
This paper addresses the following problem: given two braided tensor categories containing a fixed symmetric fusion category $\cat{A}$, how does one form a tensor product of these categories that is braided, while condensing the two copies of $\cat{A}$ in both to a single subcategory $\cat{A}$ in the product?

As a starting point, one can form the balanced tensor product \cite{Etingof2009,Douglas2014} over $\cat{A}$, a categorification of the tensor product of modules over an algebra. The balanced tensor product gives a tensor category with the two copies of $\cat{A}$ condensed into one as desired. However, the resulting tensor category will not be braided unless the symmetric subcategory is transparent (has trivial double braiding with all other objects) in both tensor categories. The reduced tensor product\footnote{This term was used by Drinfeld \cite{Drinfeld2009a} to describe this product in the special case where the symmetric subcategory is the category of super vector spaces.} of braided tensor categories over a symmetric fusion subcategory presented here yields a braided tensor category, also when the symmetric subcategory is not transparent. Its construction is inspired by the balanced tensor product, and reduces to it when the symmetric subcategory is transparent.

\subsection{Motivation}
Motivation for this paper is work on finite gauge theory \cite{Lan2016,Lan2016a,Bruillard2016}. Summarising\footnote{A more extensive summary of the Physics background can be found in \cite[Introduction]{Wasserman2017e}.} this from a mathematical standpoint, certain physical systems are described by so-called minimal modular extensions. These are modular tensor categories containing a symmetric fusion category $\cat{A}$, associated to a braided tensor category $\cat{C}$ with M\"uger centre (or transparent subcategory) $\MC{\cat{A}}{\cat{C}}$ exactly equal to $\cat{A}$. Taking a tensor product of such physical systems does not correspond to the usual Deligne tensor product of linear categories, one would like the resulting category to contain just a single copy of $\cat{A}$. As alluded to above, the naive approach of taking a balanced tensor product over $\cat{A}$ does not yield a braided tensor category. The reduced tensor product presented here is as the appropriate tensor product in this setting, and corresponds to the product of minimal modular extensions constructed in \cite{Lan2016} . The construction done there relies on Tannaka duality and Ostrik's results on the correspondence between module categories and algebra objects \cite{Ostrik2003}. Our construction of the reduced tensor product uses the braiding and tensor products of the categories involved directly, and furthermore does not rely on a choice of fibre functor for $\cat{A}$.  

The construction done in this paper is additionally motivated by the desire to better understand braided tensor categories containing a symmetric fusion category $\cat{A}$ and their (de-)equivariantisation. In the particular case where $\cat{A}$ is Tannakian (has only positive twists) and equal to $\MC{\cat{C}}{\cat{C}}$, one can produce (see \cite{Bruguieres2000,Muger2004}) a modular tensor category $\cat{C}/\cat{A}$ called the modularisation. This category is constructed by first enriching $\cat{C}$ over $\cat{A}$ (by representing the action of $\cat{A}$ on $\cat{C}$), and then applying the fibre functor for $\cat{A}$ and idempotent completing. In the super-Tannakian case (with negative twists), this procedure is also understood, and yields a ``super-modular'' category. The intermediate step, before applying the fibre functor, is a braided $\cat{A}$-tensor category, which is ``modular'' in the sense that the braiding is non-degenerate. In the case where $\cat{A}=\Rep(G)$ and is not transparent in $\cat{C}$, the same procedure of enriching and then applying the fibre functor now yields a so-called $G$-crossed braided tensor category \cite{Drinfeld2009,Turaev2010a,KirillovJr.2002}. However, the intermediate step is not well-understood, it is still an $\cat{A}$-tensor category, but no longer braided. In this paper we fill this gap by explaining in what sense the intermediate category is a braided object.

\subsection{Overview of the construction}
To give some intuition for our construction of the reduced tensor product, let us first take a closer look at the balanced tensor product. One way of constructing the balanced tensor product over a symmetric fusion category $\cat{A}$ is by using the fact \cite{Douglas2014a} that we can enrich module categories over $\cat{A}$ to $\cat{A}$-enriched categories. For categories enriched over a symmetric monoidal category, there is a natural notion of enriched cartesian product. This is the category with as objects direct sums of pairs of objects from the categories in the product, and as hom-objects the tensor product in $\cat{A}$ of the hom-objects. The balanced tensor product of two module categories can then be formed by enriching both categories in this fashion, taking the enriched cartesian product, and then de-enriching the resulting $\cat{A}$-enriched category by applying the functor $\cat{A}(\mathbb{I}_\cat{A},-)$ to the hom-objects. Here $\mathbb{I}_\cat{A}$ denotes the monoidal unit of $\cat{A}$. If one is interested in semi-simple and idempotent complete module categories, this construction is followed by a Cauchy completion.

As we will discuss in this paper, the balanced tensor product takes braided tensor categories containing $\cat{A}$ to tensor categories, but not to braided tensor categories. In fact, this is already visible at the level of the enriched categories associated to braided tensor categories containing $\cat{A}$, these will be ``$\cat{A}$-tensor'', with monoidal structure that factors through the $\cat{A}$-enriched cartesian product, but not braided. Consequently one should have no hope that the enriched cartesian product of two categories obtained in this way is braided. This in turn means the de-enrichment will not be braided in general.

To address this problem, we introduce the novel idea of taking our enrichment further. We will construct for each braided tensor category containing $\cat{A}$ an associated $\dcentcat{A}$-enriched category, where $\dcentcat{A}$ denotes the Drinfeld centre of $\cat{A}$. We show that this enriched category carries in some sense a braided monoidal structure. Similarly to the classical $\cat{A}$-enriched case, this enrichment allows an inverse $\DE$. We then define the reduced tensor product in terms of the enriched cartesian product of the $\dcentcat{A}$-enriched categories.

Underpinning this construction is the fact that $\dcentcat{A}$ is a 2-fold monoidal category. Recall that the Drinfeld centre, as introduced by Drinfeld and first written down by Majid \cite{Majid1991}, gives a braided monoidal category $\dcentcat{M}$ associated to any monoidal category $\cat{M}$, where the objects are pairs consisting of an object of $\cat{M}$ and a half-braiding between tensoring on the right and tensoring on the left with that object. The monoidal structure $\otimes_c$ on $\dcentcat{M}$ is induced from the one on $\cat{M}$, in particular we tensor the objects in pairs together as objects in $\cat{M}$. When taking the Drinfeld centre of a symmetric fusion category, the Drinfeld centre carries another, symmetric, tensor product $\otimes_s$, constructed in \cite{Wasserman2017}. These two tensor products are laxly compatible \cite{Wasserman2017a}, making $\dcentcat{A}$ into a 2-fold monoidal category.

The composition in our $\dcentcat{A}$-enriched category factors through this symmetric tensor product. That is, the resulting category is enriched over $(\dcentcat{A},\otimes_s)$, and we will denote $(\dcentcat{A},\otimes_s)$ by $\dcentcat{A}_s$ for short. The monoidal structure, however, factors through the usual tensor product $\otimes_c$, a phenomenon that we will refer to as having a $\dcentcat{A}$-crossed tensor structure. We will additionally show that the categories obtained from braided tensor categories containing $\cat{A}$ by enriching are braided in the appropriate sense. These $\dcentcat{A}$-crossed braided categories are introduced in \cite{Wasserman2017b}. The appropriate notion of enriched cartesian product $\cattens{s}$ of such categories uses the symmetric tensor product $\otimes_s$ on hom-objects.

The reduced tensor product is then defined to be the category obtained by de-enriching the product $\cattens{s}$ of the $\dcentcat{A}_s$-enrichments of two braided tensor categories containing $\cat{A}$.

\subsection{Context}
By Tannaka duality \cite{Deligne1990,Deligne2002} we have that $\cat{A}$ is equivalent to the representation category of a finite (super-)group. In \cite{Wasserman2017b}, it is shown that the 2-category $\ZAXBT$ of $\dcentcat{A}$-crossed tensor categories, equipped with the enriched cartesian product, is, for $\cat{A}=\Rep(G)$, where $G$ is a finite group, equivalent to the 2-category $\GXBT$ of $G$-crossed braided tensor categories, equipped with the degree-wise Deligne product $\cattens{G}$, and the corresponding statement for the super-group case is also spelled out. 

Let us now focus on the case $\cat{A}=\Rep(G)$ for simplicity. In \cite{Drinfeld2009}, it is established that there is an equivalence, along mutually inverse functors called equivariantisation ($\mathbf{Eq}$) and de-equivariantisation ($\mathbf{DeEq}$), between the 2-categories $\BTCA$ and $\GXBT$. The enrichment procedure given here, together with the equivalence from \cite{Wasserman2017b}, gives a factorisation of these functors, and further shows monoidality of the steps in the factorisation. In summary, we have a commutative triangle
\begin{center}\label{trianglediag}
	\begin{tikzcd}[column sep= large]
	&			(\ZAXBT,\cattens{s}) \arrow[ddl,shift left,"\DE\overline{(-)}"]	\arrow[ddr,shift left,"\overline{ \overline{(-)}}"]		&\\
	& & \\
	(\BTCA,\rt) \arrow[ruu,shift left,"\underleftarrow{\underline{(-)}}"] \arrow[rr,shift left,"\mathbf{DeEq}"] & & (\GXBT,\cattens{G})\arrow[ll,shift left, "\mathbf{Eq}"]\arrow[luu,shift left,"\mathbf{Fix}"].
	\end{tikzcd}
\end{center}
of mutually inverse symmetric monoidal equivalences of 2-categories. The proof of commutativity of this triangle is beyond the scope of this work, and can be found in \cite{Wasserman2017e}. In this article we will concern ourselves with the left hand side of this triangle.

In the present work, we do not need to refer to Tannaka duality, and all our constructions are independent of a choice of fibre functor on $\cat{A}$. Indeed, our construction only depends on the monoidal structure and the braiding of the categories involved.

The bottom side of the above triangle is the algebraic counterpart to a construction in the study of Topological Quantum Field Theories (TQFTs) called orbifoldisation. This construction takes a TQFT with values in the 2-category of linear categories $\lincat$ defined on the once-extended three-dimensional bordism category of manifolds equipped with principal $G$-bundles ($G$-TQFTs) to an oriented once-extended three-dimensional TQFT. For the latter, a full classification result is known \cite{Bartlett2015a}: these TQFTs are determined by their value on the circle. This value is a linear category equipped with the structure of a modular tensor category. A similar classification result is conjectured for the former, but here it is only known that the values on the different principal $G$-bundles over $S^1$ combine into a so-called $G$-(multi)modular category \cite{Schweigert2018}. The converse, that every such TQFT arises from a $G$-modular category, is expected to be true, but no proof is known. Orbifoldisation of TQFTs is studied in detail in \cite{Schweigert2018}, and on the value of the circle is shown to correspond to the functor $\mathbf{Eq}$. This means that one possible strategy for addressing the classification of $G$-TQFTs is then to find a geometric counterpart to the algebraic inverse $\mathbf{DeEq}$ of $\mathbf{Eq}$. As is explained in \cite{Schweigert2018}, orbifoldisation is a two-step procedure. One first constructs an oriented TQFT with values in the bicategory of $\lincat$ bundles over spans of groupoids, this step is invertible. Then one takes sections of these bundles to produce an oriented $\lincat$-valued TQFT, this step is not known to be invertible. The above triangle should be an algebraic counterpart to this factorisation, and the author expects it to be helpful in addressing the classification conjecture for $G$-TQFTs.

\subsection{Organisation}
The paper is organised as follows. We start by explaining the enriching procedure, this is summarised in Theorem \ref{cunderlliscrossedtensor}. The next section proceeds to show that this construction has an inverse, this is Theorem \ref{bfcazaxbtev}. In the final section, we construct the reduced tensor product and show it indeed defines a symmetric monoidal structure (Theorem \ref{rttheorem}). After this, we compute the reduced tensor product in specific examples, and show how it defines a pairing between minimal modular extensions.

\subsection{Acknowledgements}
The author would like to thank Chris Douglas and Andr\'e Henriques for their guidance during the project. An earlier version of the manuscript benefited greatly from Michael M\"uger's and Ulrike Tillmann's comments. Thanks go to Marina Logares for further helpful comments.

The research presented here was made possible through financial support from the Engineering and Physical Sciences Research Council for the United Kingdom, the Prins Bernhard Cultuurfonds, the Hendrik Mullerfonds, the Foundation Vrijvrouwe van Renswoude and the Vreedefonds. The author has received support from the by the Centre for
Symmetry and Deformation at the University of Copenhagen (CPH-SYM-DNRF92) and Nathalie Wahl's European Research Council Consolidator Grant (772960).

\section{Enriching}\label{enrichingsection}
Throughout this paper, we will denote by $\cat{A}$ a fixed symmetric fusion category. This first part of the paper is devoted to enriching braided tensor categories containing $\cat{A}$ as a braided subcategory over the Drinfeld centre $\dcentcat{A}$ of $\cat{A}$. We want to do this construction functorially, so we will first set up a 2-category of braided tensor categories containing $\cat{A}$. After this, the enrichment is done in two steps. First we enrich over $\cat{A}$, and see why the resulting category is $\cat{A}$-tensor, but not braided. Then we pick appropriate half-braidings for the hom-objects, lifting them to objects of $\dcentcat{A}$. This is done in a way that ensures the resulting category is braided, in the sense of being a so-called $\dcentcat{A}$-crossed braided tensor category. We will recall the relevant definitions in the next subsection.

\subsection{Preliminaries}
In this section we will introduce the basic objects of study: braided tensor categories containing a fixed symmetric fusion subcategory, and define a 2-category of such categories. After this, we recall the definition of the Drinfeld centre of $\cat{A}$, and the two laxly compatible tensor products on this. Throughout this paper we will work over an algebraically closed field of characteristic zero.

\subsubsection{Braided tensor categories containing a symmetric fusion subcategory}
We assume the reader is familiar with the theory of tensor categories and fusion categories. To avoid any confusion, we will briefly recall the basic definitions here.

By a \emph{tensor category} we will mean a category enriched in the category $\Vect$ of \emph{finite} dimensional vector spaces, is abelian for this enrichment (in particular: it is a \emph{linear category}), idempotent complete, and carries a monoidal structure that factors through the Deligne tensor product of linear categories and is right exact in both slots. Such a category is called \emph{fusion} if it is rigid, semi-simple with finitely many isomorphism classes of simple objects, and has a simple unit object.

We will often suppress the tensor product symbol and write $cc'$ for $c\otimes c'$. 

\begin{df}
	Let $\cat{A}$ be a symmetric fusion category and let $\cat{C}$ be a braided tensor category. We say that $\cat{C}$ \emph{contains} $\cat{A}$ if $\cat{C}$ comes equipped with a braided tensor functor $\cat{A}\rightarrow \cat{C}$.
\end{df}

As $\cat{A}$ is semi-simple, any tensor functor on $\cat{A}$ is automatically faithful. We are not asking that $\cat{A}$ is embedded in $\cat{C}$, ie. the inclusion need not be full.

Throughout this paper $\cat{C}$ will be used to denote a braided tensor category containing $\cat{A}$.

\subsubsection{The 2-category of braided tensor categories containing $\cat{A}$}
In defining the 2-category $\BTCA$ of braided tensor categories containing $\cat{A}$, there are several choices to be made, we use the following definition:

\begin{df}\label{bfcdef}
	The \emph{2-category $\BTCA$ of braided tensor categories containing the symmetric fusion category $\cat{A}$} is the 2-category with
	\begin{itemize}
		\item objects: braided tensor categories $\cat{C}$ containing $\cat{A}$, such that tensoring by $\cat{A}$ is exact;
		\item morphisms: triples $(F,\mu_{-1},\mu_{0},\mu_1)$, where $(F,\mu_{-1},\mu_1)$ is a braided tensor functor (that is, $F:\cat{C}\rar \cat{C}'$ is a right exact linear functor, $\mu_{-1}\colon F(\mathbb{I}_\cat{C})\xrightarrow{\cong}\mathbb{I}_{\cat{C}'}$ an isomorphism compatible with the unitors and $\mu_1\colon F(-\otimes_{\cat{C}}-)\Rightarrow F(-)\otimes_{\cat{C}'}F(-)$ a natural isomorphism compatible with the associators), that preserves the inclusion of $\cat{A}$ up to a chosen monoidal natural isomorphism $\mu_{0}$;
		\item 2-morphisms: monoidal natural transformations $\eta$ between $(F,\mu_{-1},\mu_0,\mu_1)$ and $(G,\nu_{-1},\nu_0,\nu_1)$ that satisfy $\nu_{0}\circ\eta|_\cat{A}=\mu_{0}$.
	\end{itemize}
\end{df}

The goal of this paper is to define a symmetric monoidal structure on this 2-category.

\subsubsection{Enriched cartesian product}
We will need the definition of the enriched cartesian product of categories enriched and tensored over a symmetric category. By tensored we mean:

\begin{df}
	Let $\cat{V}$ be a symmetric finite tensor category, and let $\cat{U}$ be a $\cat{V}$-enriched category. Then \emph{a $\cat{V}$-tensoring for $\cat{U}$} is a monoidal functor $\cat{V}\rar \Aut(\cat{U})$ sending $v\mapsto v\cdot -$ such that
	\begin{equation}\label{tensoringchar}
	\cat{U}(v\cdot u, u') \cong \cat{V}(v, \cat{U}(u,u'))
	\end{equation}
	naturally.
\end{df}

Given a symmetric tensor category $\cat{V}$ with right exact monoidal structure there is a $\cat{V}$-enriched and tensored category $\underleftarrow{\cat{V}}$ obtained by finding representing objects $\underleftarrow{\cat{V}}(v,v')$ for $\cat{V}(-\otimes v,v')$. These objects will satisfy Equation \eqref{tensoringchar} by definition.

The enriched cartesian product is defined as follows, for more details see \cite[Section A.1.2]{Wasserman2017b}.

\begin{df}\label{enrichedcartprod}
	Let $\cat{V}$ be a symmetric fusion category. The \emph{enriched cartesian product $\cat{U}\bxc{V}\cat{W}$} of two $\cat{V}$-enriched and tensored categories $\cat{U}$ and $\cat{W}$ is the Cauchy completion of the $\cat{V}$-enriched category whose objects are symbols $u\boxtimes w$ with $u\in \cat{U}$ and $w\in \cat{W}$, and whose hom-objects are given by
	\begin{equation}
		\cat{U}{\bxc{V}\cat{W}}\left(u\boxtimes w,u'\boxtimes w'\right)=\cat{U}(u,u')\otimes_\cat{V}\cat{W}(w,w').
	\end{equation}
	Composition is given by first applying the symmetry in $\cat{V}$ and then the compositions in $\cat{U}$ and $\cat{W}$.
	
	The unit for $\bxc{V}$ is $\underleftarrow{\cat{V}}$, with unitor $\underleftarrow{\cat{V}}\bxc{V}\cat{U}\rar \cat{U}$ defined by $v\boxtimes u\mapsto v\cdot u$, where $\cdot$ denotes the $\cat{V}$-tensoring of $\cat{U}$.
\end{df}

\subsubsection{The Drinfeld centre of a symmetric fusion category}
We recall the definition of the Drinfeld centre of a monoidal category for convenience.
\begin{df}\label{drinfeldcenterdef}
	Let $\cat{M}$ be a monoidal category. The \emph{Drinfeld centre $\cat{Z}(\cat{M})$ of $\cat{M}$} is the braided monoidal category with objects pairs $(m,\beta)$ where $m$ is an object of $\cat{M}$ and $\beta$ is a natural transformation
	$$
	\beta\colon  - \otimes m \Rightarrow m\otimes -,
	$$
	called a half-braiding. The $\beta$ are further required to satisfy 
	\begin{equation}\label{halfbraidreq}
	\beta_{nn'}= ( \beta_{n}\otimes\id_{n'})\circ( \id_{n}\otimes\beta_{n'}),
	\end{equation}
	for all $n,n'\in\cat{M}$.
	
	The morphisms in $\cat{Z}(\cat{M})$ are those morphisms in $\cat{M}$ that commute with the half-braidings in the obvious way. The tensor product $\otimes_c$ is the one on $\cat{M}$ with consecutive half-braiding, and the braiding is the one specified by the half-braidings. 
	
	The Drinfeld centre comes with a monoidal \emph{forgetful functor} $\Phi\colon \dcentcat{A}\rar \cat{A}$, which forgets the half-braiding.
\end{df}

In the string diagram calculus for $\dcentcat{M}$ we will use
$$
\beta_z=
\hbox{
	\begin{tikzpicture}[baseline=(current  bounding  box.center)]
		\node (c) at (0.5,0){$z$};
		\node (a) at (1, 0){$m$};

		\coordinate (co) at (1,1);
		\coordinate (ao) at ( 0.5,1);
		\begin{knot}[clip width=4]
			\strand [thick] (a) to [out=90,in=-90] (ao);
			\strand [thick] (c) to [out=90,in=-90] (co);
		\end{knot}
	\end{tikzpicture}
}
$$
to depict the half-braiding of $(m,\beta)\in\dcentcat{M}$ with $z\in\dcentcat{M}$.

The braiding on the Drinfeld centre of a tensor category is non-degenerate, in the sense that no object other than the unit braids trivially with all other objects. In particular, if $\cat{R}$ is a spherical fusion category, then $(\dcentcat{R},\otimes_c,\beta)$ is a modular tensor category. The Drinfeld centre $\dcentcat{B}$ of a braided monoidal category $(\cat{B},\otimes, \beta)$ comes equipped with an inclusion $\cat{B}\hookrightarrow\dcentcat{B}$ taking $b\mapsto \beta_{-,b}$. 

It was shown in \cite{Wasserman2017} that the Drinfeld centre $\dcentcat{A}$ of a symmetric fusion category carries a second, symmetric, tensor product $\otimes_s$. To recall this, let us first a set of representatives $\SA$ of the isomorphism classes of simple objects of $\cat{A}$. Denote by $d_i$ the quantum dimension of the simple object $i$, and by $D=\sum_{i\in\cat{O}(\cat{A})}d_i^2$ the global dimension of $\cat{A}$. The symmetric tensor product is defined as follows:

\begin{df}[\cite{Wasserman2017}]\label{symtensdef}
	Let $z=(a,\beta)$ and $z'=(a',\beta')$ be objects of $\dcentcat{A}$ and let $\Pi_{z,z'}$ be the idempotent
	$$
	\Pi_{z,z'}=
	\hbox{
		\begin{tikzpicture}[baseline=(current  bounding  box.center)]
			\coordinate (west) at (-1,0);
			\coordinate (north) at (0,0.5);
			\coordinate (east) at (1,0);
			\coordinate (south) at (0,-0.4);
			\node (a) at (-0.5,-1.2) {$z$};
			\node (b) at (0.5,-1.2) {$z'$};
			\coordinate (ao) at (-0.5,1);
			\coordinate (bo) at (0.5,1);
			
			\begin{knot}[clip width=4]
				\strand [blue, thick] (west)
				to [out=90,in=-180] (north)
				to [out=0,in=90] (east)
				to [out=-90,in=0] (south)
				to [out=-180,in=-90] (west);
				\strand [thick] (a) to (ao);
				\strand [thick] (b) to (bo);
				\flipcrossings{1,4}
			\end{knot}
		\end{tikzpicture}
	}
	=\sum_{i\in\cat{O}(\cat{A})}\frac{d_i}{D}
	\hbox{
		\begin{tikzpicture}[baseline=(current  bounding  box.center)]
			\coordinate (west) at (-1,0);
			\coordinate (north) at (0,0.5);
			\coordinate (east) at (1,0);
			\coordinate (south) at (0,-0.4);
			\node (a) at (-0.5,-1.2) {$z$};
			\node (b) at (0.5,-1.2) {$z'$};
			\coordinate (ao) at (-0.5,1);
			\coordinate (bo) at (0.5,1);
			
			\node (i) at (-1.25,0){$i$};
			
			\begin{knot}[clip width=4]
				\strand [thick] (west)
				to [out=90,in=-180] (north)
				to [out=0,in=90] (east)
				to [out=-90,in=0] (south)
				to [out=-180,in=-90] (west);
				\strand [thick] (a) to (ao);
				\strand [thick] (b) to (bo);
				\flipcrossings{1,4}
			\end{knot}
		\end{tikzpicture}
	},
	$$
	where the last equation introduces the notation that an unlabelled ring in a string diagram represents a weighted sum over simple objects of $\cat{A}\subset\dcentcat{A}$.	Write $z\otimes_\Pi z'$ for the object associated to this idempotent, then the \emph{symmetric tensor product} of $z$ and $z'$ is defined to be
	$$
	z\otimes_s z'=(\Phi(z\otimes_\Pi z'), \mathfrak{b} ),
	$$ 
	where the half-braiding $\mathfrak{b}$ is the natural isomorphism with components
	\begin{equation}\label{symtenshalfbraid}
	\mathfrak{b}_a=
	\hbox{
		\begin{tikzpicture}[baseline=(current  bounding  box.center)]
			
			\node (cf) at (0,-0.5){$z\otimes_s z'$};
			\node (inc) at (-0.75,-0.5){$a$};
			
			\node (tr) at (0,0.1) {$\bigtriangledown$};
			\coordinate (trd) at (0,0);
			\coordinate (truw) at (-0.1,00.2);
			\coordinate (true) at (0.1,0.2);

			\node (tr1) at (0,2.2) {$\bigtriangleup$};
			\coordinate (trd1w) at (-0.1,2.12);
			\coordinate (trd1e) at (0.1,2.12);
			\coordinate (tru1) at (0,2.35);
			\node (c1) at (0,3) {};
			
			\coordinate (outg) at (0.75,2.8);
			
			\begin{knot}[clip width=4]
				\strand [thick] (cf) to (trd);
				\strand [thick] (true) to [out=70, in=-70] (trd1e);
				\strand [thick] (tru1) to (c1);
				\strand [thick] (inc)  to [out=90, in =-90] (outg);
				\strand [thick] (truw) to [out=110,in=-110] (trd1w);
			\end{knot}
		\end{tikzpicture}
	}.
	\end{equation}
	Here the triangles denote the inclusion and projection morphisms between $z\otimes_c z'$ and $z\otimes_\Pi z'$. The \emph{unit $\mathbb{I}_s\in \dcentcat{A}$ for $\otimes_s$} is the object $\sum_{i\in \SA} ii^*$, equipped with the half-braiding:
		\begin{equation}\label{unithalfbraid}
			\hbox{
				\begin{tikzpicture}[baseline=(current  bounding  box.center)]
					\node (i) at (0,0) {$\mathbb{I}_s$};
					\coordinate (ip) at (-0.1,0.27);
					\coordinate (o) at (0,3);
					\coordinate (op) at (-0.1,3);
					
					\node (a) at (-0.5,0){$a$};
					\coordinate (ao) at (0.5,3);

					\begin{knot}[clip width=4]
						\strand [thick,blue] (i) to (o);
						\strand [thick,blue] (ip) to (op);
						\strand [thick] (a) to [out=90,in=-90] (ao);
					\end{knot}
					
				\end{tikzpicture}
			}
			:=
			\sum_{i,j\in\cat{O}(\cat{A})}\sum_{\phi\in B(ai,j)}
			\hbox{
				\begin{tikzpicture}[baseline=(current  bounding  box.center)]
					\node (i) at (0,0) {$i$};
					\node (o) at (0,3){$j$};
					
					\node (id) at (0.5,0){$i^*$};
					\node (do) at (0.5,3){$j^*$};
					
					\node (a) at (-0.5,0){$a$};
					\node (phi) at (0,1) [draw]{$\phi$};
					
					\node (phid) at (0.5,2)[draw]{$\phi^*$};
					\node (ao) at (1,3){$a$};
					
					\begin{knot}[clip width=4]
						\strand [thick] (i) to (phi);
						\strand [thick] (phi) to (o);
						\strand [thick] (a) to [out=90,in=-110] (phi);
						\strand [thick] (id) to (phid);
						\strand [thick] (phid) to (do);
						\strand [thick] (phid) to [out=-60,in=-90] (ao);
					\end{knot}
					
				\end{tikzpicture}
			}.
		\end{equation}
		The double strand is used to denote the identity on $\mathbb{I}_s$, $B(ai,j)$ denotes a basis for $\cat{A}(ai,j)$, and $\phi^*$ is the adjoint of the dual $\phi^t \in \cat{A}(i,ai)$ with respect to composition along $i$ of $\phi \in \cat{A}(ai,j)$ (see \cite{Wasserman2017} for details). 
		
		The right unitor is built from evaluation morphisms
		\begin{equation}\label{rightunitor}
			\hbox{
				\begin{tikzpicture}[baseline=(current  bounding  box.center)]
					\node (inc) at (0,0.5){$ z\otimes_s\mathbb{I}_s$};
					\node (bo) at (0,3){$z$};
					\node (tr) at (0,1.1) {$\bigtriangledown$};
					\coordinate (trd) at (0,1);
					\coordinate (truw) at (0.15,1.2);
					\coordinate (true) at (-0.1,1.2);
					\coordinate (trum) at (0.1,1.2);
					
					\coordinate (ctrl) at (-0.175,2.3);
					
					\begin{knot}[clip width=4,clip radius=3pt]
						\strand [thick] (inc) to (trd);
						\strand [thick,blue] (trum) to [out=60, in=180] (ctrl) to [out=0,in=60] (truw);
						\strand [thick] (true) to [out=120,in =-90] (bo);
						\flipcrossings{2}
					\end{knot}
				\end{tikzpicture}
			}
			:=
			\sum\limits_{i\in\cat{O}(\cat{A})}
			\hbox{
				\begin{tikzpicture}[baseline=(current  bounding  box.center)]
					\node (inc) at (0,0.5){$z\otimes_s \mathbb{I}_s$};
					\node (bo) at (0,3){$z$};
					\node (il) at (0.35,1.4){$i$};
					\node (tr) at (0,1.1) {$\bigtriangledown$};
					\coordinate (trd) at (0,1);
					\coordinate (truw) at (0.15,1.2);
					\coordinate (true) at (-0.1,1.2);
					\coordinate (trum) at (0.1,1.2);
					
					\coordinate (ctrl) at (-0.175,2.3);
					
					\begin{knot}[clip width=4,clip radius=3pt]
						\strand [thick] (inc) to (trd);
						\strand [thick] (trum) to [out=60, in=180] (ctrl) to [out=0,in=60] (truw);
						\strand [thick] (true) to [out=120,in =-90] (bo);
						\flipcrossings{2}
					\end{knot}
				\end{tikzpicture}
			},
		\end{equation}
		where on the left hand side the double strand coming out of the inclusion denotes the identity on $\mathbb{I}_s$ (as above). The left unitor is obtained from this by reflecting in a vertical line.
\end{df}

For more details, see \cite{Wasserman2017}. The tensor product $\otimes_s$ is indeed symmetric, with symmetry coming from the symmetry in $\cat{A}$. It is distinct from $\otimes_c$, and in \cite{Wasserman2017a} it was shown that the two tensor products are laxly compatible, in the sense that there are natural transformations between $(-\otimes_c - )\otimes_s(-\otimes_c-)$ and $(-\otimes_s - )\otimes_c(-\otimes_s-)$, that are compatible with the associators, unitors, symmetry, and braiding. This makes $\dcentcat{A}$ into a braided two-fold monoidal category. We will write $\dcentcat{A}_s$ for the symmetric tensor category $(\dcentcat{A},\otimes_s)$. 

\subsubsection{$\dcentcat{A}$-crossed braided categories}
In \cite{Wasserman2017b} the two-fold monoidal structure is used to define the notion of a $\dcentcat{A}$-crossed braided tensor category. In short, a $\dcentcat{A}$-crossed tensor category is a category which has hom-objects in $\dcentcat{A}$, with a composition that factors through $\otimes_s$, and a monoidal structure that (on morphisms) factors through $\otimes_c$. A braiding for such a category is natural transformation between the two possible orders of taking the tensor product, where we take into account that changing the order for $\otimes_c$ requires using the braiding for $\otimes_c$. To recall the formal definition, we need:

\begin{df}\label{ZAprodsdef}
	Let $\cat{K}$ and $\cat{L}$ be $(\dcentcat{A},\otimes_s)$-enriched and tensored categories. We will denote by $\cat{K}\cattens{s}\cat{L}$ their \emph{$\dcentcat{A}_s$-enriched cartesian product} from Definition \ref{enrichedcartprod}.	Their \emph{convolution product $\cat{K}\cattens{c}\cat{L}$} is the category with pairs $k\boxtimes l$ for $k\in \cat{K}$ and $l\in\cat{L}$ as its objects and 
	$$
	\cat{K}\cattens{c}\cat{L}(k\boxtimes l, k'\boxtimes l')= \cat{K}(k,k')\otimes_c \cat{L}(l,l')
	$$
	as its morphisms. The \emph{braiding functor $B$ for $\cattens{c}$} is the functor
	\begin{center}
	\begin{tikzcd}[column sep= small, row sep = 0ex,/tikz/column 1/.append style={anchor=base west},/tikz/column 2/.append style={anchor=base east},/tikz/column 3/.append style={anchor=base west}]
		B\colon& \cat{K}\cattens{c}\cat{L}	\arrow[r]&\cat{L}\cattens{c}\cat{K}\\ 
		&k \boxtimes l 				\arrow[r,mapsto]& l \boxtimes k \\
		&\cat{K}\cattens{c}\cat{L}(k\boxtimes l, k'\boxtimes l')\arrow[r,"\beta"] &\cat{L}\cattens{c}\cat{K}(l\boxtimes k, l'\boxtimes k'),
	\end{tikzcd}	
	\end{center}
	where $\beta$ denotes the braiding for $\otimes_c$. The \emph{unit $\cat{A}_\cat{Z}$ for $\cattens{c}$} is the category $\cat{A}$ enriched over itself, viewed as a $\dcentcat{A}_s$ enriched category (via the inclusion $\cat{A}\subset \dcentcat{A}$). If we denote by $\cdot$ the action of $\dcentcat{A}_s$ on $\cat{K}$, the unitor $\cat{A}_\cat{Z}\cattens{c}\cat{K}\rar \cat{K}$ is determined by $a\boxtimes k\mapsto (a\otimes_c \mathbb{I}_s)\cdot k$. 
\end{df}

We will use $\cattens{s}$ to obtain the reduced tensor product. More details, including a proof that $\cat{A}_\cat{Z}$ is indeed the unit for $\cattens{c}$, can be found in \cite{Wasserman2017b}. We now have what we need to define $\dcentcat{A}$-crossed braided tensor categories.

\begin{df}\label{zacrossedtensordef}
	A $\dcentcat{A}$-crossed tensor category is a $\dcentcat{A}_s$-enriched and tensored category $\cat{K}$, together with a functor 
	$$
	\otimes_\cat{K}\colon \cat{K}\cattens{c}\cat{K} \rar \cat{K}
	$$
	and a functor
	$$
	\mathbb{I}_\cat{K}\colon \cat{A}_\cat{Z} \rar \cat{K},
	$$
	and natural transformations expressing associativity and unitality of $\otimes_\cat{K}$ in the usual way. A braiding for $(\cat{K},\otimes_{\cat{K}})$ is a monoidal natural isomorphism
	$$
	\beta\colon \otimes_{\cat{K}}\Rightarrow \otimes_\cat{K} \circ B. 
	$$
\end{df} 

\subsubsection{The 2-category of $\dcentcat{A}$-crossed braided categories}
The $\dcentcat{A}$-crossed braided categories can be organised into a symmetric monoidal 2-category. The morphisms and 2-morphisms in this category are:

\begin{df}\label{tensorfunctsnattrafos}
	Let $(\cat{K},\otimes_\cat{K},\mathbb{I}_\cat{K})$ and $(\cat{L},\otimes_\cat{L},\mathbb{I}_\cat{L})$ be $\dcentcat{A}$-crossed tensor categories. A \emph{$\dcentcat{A}$-crossed tensor functor} from $\cat{K}$ to $\cat{L}$ is a triple $(F,\mu_0,\mu_1)$ consisting of a $\dcentcat{A}_s$-enriched functor $F\colon \cat{K}\rar \cat{L}$, and $\dcentcat{A}_s$-enriched natural isomorphisms
	\begin{align*}
		\mu_0:& F\circ\mathbb{I}_\cat{K}\Rightarrow \mathbb{I}_\cat{L},\\
		\mu_1:& F(-\otimes_{\cat{K}}-)\Rightarrow F(-)\otimes_{\cat{L}}F(-).
	\end{align*}
	This data should satisfy the usual compatibility with the associators and the unitors. 
	
	If $\cat{K}$ and $\cat{L}$ carry braidings $\beta_K$ and $\beta_L$, respectively, then $(F,\mu_0,\mu_1)$ is called \emph{braided} if the following natural transformations agree:
	\begin{align*}
	F(-\otimes_{\cat{K}}-)\xRightarrow{F(\beta_K)} F(-\otimes_{\cat{K}}-)B\xRightarrow{\mu_1}& (-\otimes_\cat{L}-)(F\cattens{c}F)B,\\
	F(-\otimes_{\cat{K}}-)\xRightarrow{ F\cattens{c} F( \mu_1)} (-\otimes_{\cat{L}}-)(F\cattens{c}F) \xRightarrow{\beta_\cat{L}}& (-\otimes_\cat{L}-)B(F\cattens{c}F).
	\end{align*}
	
	A \emph{monoidal natural transformation} between two such functors $(F,\mu_0,\mu_1)$ and $(G,\nu_0,\nu_1)$ between $\cat{K}$ and $\cat{L}$ is a $\dcentcat{A}_s$-enriched natural transformation $\eta$ that makes the following diagrams commute
	\begin{center}
		\begin{tikzcd}
			F(-\otimes_{\cat{K}}-) \arrow[r,"\mu_1",Rightarrow]\arrow[d,"\eta\circ(-\otimes -)",Rightarrow]	& F(-)\otimes_{\cat{L}}F(-)\arrow[d,"\eta\otimes \eta",Rightarrow]\\
			G(-\otimes_\cat{K} -)	\arrow[r,"\nu_{1}",Rightarrow]								& G(-)\otimes G(-)
		\end{tikzcd}
		\quad
		\begin{tikzcd}[row sep=tiny]
			F\circ \mathbb{I}_\cat{K}	\arrow[dd,"\eta\circ\mathbb{I}_\cat{K}",Rightarrow] \arrow[rd,"\mu_0",Rightarrow]	&						\\
			&\mathbb{I}_\cat{L}\\
			G\circ \mathbb{I}_\cat{K}	\arrow[ru,Rightarrow,"\nu_0"']
		\end{tikzcd}.
	\end{center}
	Here the composite of $\eta$ with a functor is to be understood as whiskering.
\end{df}

For more details, we refer the reader to \cite{Wasserman2017b}. The 2-category is then:

\begin{df}\label{zaxbtdef}
	The \emph{symmetric monoidal 2-category $\ZAXBT$ of $\dcentcat{A}$-crossed braided tensor categories} is the 2-category with
	\begin{itemize}
		\item objects $\dcentcat{A}$-crossed braided tensor categories,
		\item morphisms braided $\dcentcat{A}$-crossed tensor functors,
		\item 2-morphisms monoidal natural transformations,
		\item monoidal structure $\cattens{s}$, with swap map $S$.
	\end{itemize}
\end{df}

\subsection{Enriching over a symmetric subcategory}
It is well known (see for example \cite{Ostrik2003}, see also \cite[Proposition 2.15]{Douglas2014a}) that from a module category over a fusion category one can obtain a category enriched over the acting category. We will recall this construction here, and spell out the special features this has when the acting category is a symmetric category.

\subsubsection{The enriched category}
This section applies to a tensor category with a tensor inclusion of a (symmetric) fusion category, we will only need the braiding later.

\begin{df}\label{enrich}
	Let $\cat{C}$ be a tensor category containing a symmetric fusion category $\cat{A}$. The \emph{left-associated $\cat{A}$-enriched category $\underleftarrow{\cat{C}}$} has the same objects as $\cat{C}$ and the hom-object $\underleftarrow{\cat{C}}(c,c')$ is defined by
	\begin{equation}\label{enrichedhomdef}
	\cat{A}(a,\underleftarrow{\cat{C}}(c,c'))=\cat{C}(ac,c').
	\end{equation}
	An $\cat{A}$-point $f\colon a \rar \underleftarrow{\cat{C}}(c,c')$ of $\underleftarrow{{\cat{C}}}(c,c)'$ will be denoted by $f\colon c\rar_{a} c'$, and is called a \emph{morphism of degree $a$} from $c$ to $c'$. We will refer to morphisms mapped to each other by the isomorphism from Equation \eqref{enrichedhomdef} as \emph{mates}. Given $f\colon c\rar_a c'$ we will write $\bar{f}\colon ac\rar c'$ for its mate, and the mate of $g\colon ac\rar c'$ will be denoted by $\underline{g}\colon c\rar_a c'$.
	
	The composition morphisms, 
	$$
	\circ\colon  \underleftarrow{\cat{C}}(c',c'')\otimes\underleftarrow{\cat{C}}(c,c')\rar \underleftarrow{\cat{C}}(c,c''),
	$$
	are defined by observing that we have the following string of canonical isomorphisms
	\begin{align}\label{stringofiso}
	\begin{split}
	\cat{A}(a,\underleftarrow{\cat{C}}(c',c'')\otimes\underleftarrow{\cat{C}}(c,c'))&\cong \cat{A}(\underleftarrow{\cat{C}}(c',c'')^*\otimes a,\underleftarrow{\cat{C}}(c,c'))\\
	&\cong \cat{C}(\underleftarrow{\cat{C}}(c',c'')^*\otimes ac,c')\\
	&\cong \cat{C}(ac,\underleftarrow{\cat{C}}(c',c'')\otimes c')\\
	&\raru{\tn\ev}\cat{C}(ac,c'')\\
	&\cong \cat{A}(a,\underleftarrow{\cat{C}}(c,c'')).
	\end{split}
	\end{align}
	Here $\tn{ev}\colon \underleftarrow{\cat{C}}(c,c')\cdot c \rar c'$ is the unit of the adjunction from \eqref{enrichedhomdef}.

	Similarly, we define the \emph{right-associated $\cat{A}$-enriched category} $\underrightarrow{\cat{C}}$ by representing $a\mapsto\cat{C}(ca,c')$.
\end{df}

Note that the mate $\bar{f}$ of $f\colon c\rar_a c'$ is a morphism in $\cat{C}$. In terms of mates and the composition in $\cat{C}$, the composition of $f\colon c\rar_a c'$ and $f'\colon c'\rar_{a'} c''$ in $\underleftarrow{\cat{C}}$ is given by
\begin{equation}\label{matescomp}
f'\circ f= \underline{\bar{f}'(\id_{a'}\otimes \bar{f})},
\end{equation}
which in string diagrams reads as
\begin{equation}\label{matescompstring}
\hbox{
	\begin{tikzpicture}[baseline=(current  bounding  box.center)]
	\node (ap) at (-0.5,0){$a'$};
	\node (a) at (0,0){$a$};
	\node (c) at (0.5,0){$c$};
	
	\node (f) at (0.25,1.2)[draw,minimum width=20pt,minimum height=10pt,thick]{$\bar{f}$};
	\node (fp) at (-0.25,2.6)[draw,minimum width=20pt,minimum height=10pt,thick]{$\bar{f}'$};
	
	\node (cp) at (-0.25,3.5){$c''$};
	
	\begin{knot}[clip width=4]
	\strand [thick] (ap)
	to [out=90,in=-110] (fp);
	\strand [thick] (a)
	to [out=90,in=-110] (f);
	\strand [thick] (c)
	to [out=90,in=-75] (f);
	\strand [thick] (f)
	to [out=90,in=-60] (fp);
	\strand [thick] (fp)
	to [out=90,in=-90] (cp);
	\end{knot}
	\end{tikzpicture}
}.
\end{equation}

The object $\underleftarrow{{\cat{C}}}(c,c')$ can be represented as follows. Pick a set of representatives $\SA$ of the isomorphism classes of simple objects in $\cat{A}$. Then 
$$
\underleftarrow{{\cat{C}}}(c,c')\cong \bigoplus_{i\in\SA} \cat{C}(ic,c')i,
$$
as is easily checked using Equation \eqref{enrichedhomdef}. We also have (see \cite[Lemma A.13]{Wasserman2017b}) a canonical isomorphism
\begin{equation}\label{aintohomiso}
		a  \underleftarrow{\cat{C}}(c,c')\xrightarrow{\cong} \underleftarrow{\cat{C}}(c,ac').
\end{equation}

\begin{rmk}
	Both $\underleftarrow{\cat{C}}$ and $\underrightarrow{\cat{C}}$ are tensored over $\cat{A}$. For $\underrightarrow{\cat{C}}$, the tensoring induces a functor $\cat{A}^{\tn{mop}}\rar \End(\underrightarrow{\cat{C}})$, where $\cat{A}^\tn{mop}$ denotes the monoidal opposite of $\cat{A}$.
\end{rmk}

The $\cat{A}$-product $\bxc{A}$ (see Definition \ref{enrichedcartprod}) of an $\cat{A}$-enriched category $\underleftarrow{\cat{C}}$ obtained from Definition \ref{enrich} with itself has some nice features. First of all, the action of $\cat{A}$ on $\underleftarrow{\cat{C}}$ by tensoring on the left gives an action of $\cat{A}$ on $\underleftarrow{\cat{C}}\bxc{A}\underleftarrow{\cat{C}}$ defined by
\begin{equation}\label{tensoronprod}
	a (c\boxtimes c') = (ac)\boxtimes c'\cong c\boxtimes (ac'),
\end{equation}
for $a\in \cat{A}$ and $c,c\in\underleftarrow{\cat{C}}$, and this action is an $\cat{A}$-tensoring for the $\cat{A}$ enrichment: it admits natural isomorphisms similar to Equation \eqref{enrichedhomdef}. For more details see \cite[Proposition A.18]{Wasserman2017b}.

Corresponding to the product $f_1\boxtimes f_2$ of $f_1\colon c_1\rar_{a_1}c_1'$ and $f_2\colon c_2\rar_{a_2}c_2'$ there is, by using the $\cat{A}$-tensoring on the product $\underleftarrow{\cat{C}}\bxc{A}\underleftarrow{\cat{C}}$ and the tensor product in $\cat{C}$, a map $\bar{f}_1\otimes_\cat{C}\bar{f}_2$. It is tempting to represent this in string diagrams as
\begin{equation*}
\begin{tikzpicture}[baseline=(current  bounding  box.center)]
\node (a1) at (-1,-1){$a_1$};
\node (c1) at (-0.5,-1){$c_1$};
\node (a2) at (0.5,-1){$a_2$};
\node (c2) at (1,-1){$c_2$};

\node (f1) at (-0.75,0)[draw,minimum width=20pt,minimum height=10pt,thick]{$\bar{f}_1$};
\node (f2) at (0.75,0)[draw,minimum width=20pt,minimum height=10pt,thick]{$\bar{f}_2$};

\node (c1p) at (-0.75,1){$c_1'$};
\node (c2p) at (0.75,1){$c_2'$};

\begin{knot}
\strand[thick] (a1)
to [out=90,in=-105] (f1);
\strand[thick] (c1)
to [out=90,in=-75] (f1);
\strand[thick] (a2)
to [out=90,in=-105] (f2);
\strand[thick] (c2)
to [out=90,in=-75] (f2);

\strand[thick] (f1)
to [out=90,in=-90] (c1p);
\strand[thick] (f2)
to [out=90,in=-90] (c2p);
\end{knot}
\end{tikzpicture}.
\end{equation*}
Care should be taken, however, that, by Equation \eqref{tensoronprod}, the position of the $a$'s is immaterial. To avoid confusion we will therefore always keep the objects of $\cat{A}$ to the left when we are dealing with left enrichments. In drawing string diagrams this does mean that we need to cross $\cat{A}$-strands past $\cat{C}$-strands. To emphasise such crossings are not actual braidings in $\cat{C}$, we will draw them unresolved as follows
\begin{equation}\label{problemdiagrams}
\hbox{
	\begin{tikzpicture}[baseline=(current  bounding  box.center)]
	\node (a1) at (-1.5,-1){$a_1$};
	\node (a2) at (-1,-1){$a_2$};
	\node (c1) at (-0.5,-1){$c_1$};
	\node (c2) at (0.25,-1){$c_2$};

	\node (f1) at (-0.75,0.5)[draw,minimum width=20pt,minimum height=10pt,thick]{$\bar{f}_1$};
	\node (f2) at (0.25,0.5)[draw,minimum width=20pt,minimum height=10pt,thick]{$\bar{f}_2$};
	
	\node (c1p) at (-0.75,1.25){$c_1'$};
	\node (c2p) at (0.25,1.25){$c_2'$};
	
	\begin{knot}[clip width=4]
	\strand[thick] (a1)
	to [out=90,in=-105] (f1);
	\strand[thick] (c1)
	to [out=90,in=-75] (f1);
	\strand[thick] (c2)
	to [out=90,in=-75] (f2);
	\strand[thick] (f1)
	to [out=90,in=-90] (c1p);
	\strand[thick] (f2)
	to [out=90,in=-90] (c2p);
	\end{knot}
	\draw[thick] (a2)
	to [out=90,in=-105] (f2);
	\end{tikzpicture}
}
.
\end{equation}

When considering a morphism $f\colon c_1\bx c_2 \rar_a c_1'\bx c_2'$, we will give a string diagram presentation by first picking a factorisation $(t,f_1,f_2)$
\begin{equation}\label{factoraprodmorph}
f\colon a\raru{t} a_1a_2\raru{f_1f_2} \underleftarrow{\cat{C}}(c_1,c_1')\underleftarrow{\cat{C}}(c_2,c_2'),
\end{equation}
and then using the tensor isomorphism to find mates for $f_1$ and $f_2$. There are many different choices of factorisation for a given $f$. In terms of the triples,  we have the equivalence relation
$$
(t,f_1\circ g_1,f_2\circ g_2)\sim(g_1g_2\circ t, f_1,f_2),
$$
for $g_i:a_i'\rar a_i$ for $i=1,2$, with $t:a\rar a_1'a_2'$ and $f_i: a_ic_i\rar c_i'$ with $i=1,2$. A factorisation $(t,f_1,f_2)$ can be presented in string diagrams by
\begin{equation}\label{morphinaprod}
	\hbox{
		\begin{tikzpicture}[baseline=(current  bounding  box.center)]
			\node (c1) at (-0.5,-1.5){$c_1$};
			\node (c2) at (0.25,-1.5){$c_2$};
			\node (a) at (-1.25,-1.5){$a$};
			
			\coordinate (merge) at (-1.25,-1.25);
			
			\node (f1) at (-0.75,0.25)[draw,minimum width=20pt,minimum height=10pt,thick]{$\bar{f}_1$};
			\node (f2) at (0.25,0.25)[draw,minimum width=20pt,minimum height=10pt,thick]{$\bar{f}_2$};
			
			\node (c1p) at (-0.75,1.25){$c_1'$};
			\node (c2p) at (0.25,1.25){$c_2'$};
			
			\begin{knot}[clip width=4]
				\strand[thick] (f1)
				to [out=-90,in=90] (merge)
				to [out=-90,in=90] (a)
				to [out=90,in=-90] (merge)
				to [out=90,in=-110] (f2);
				\strand[thick] (c2)
				to [out=90,in=-75] (f2);
				\strand[thick] (f1)
				to [out=90,in=-90] (c1p);
				\strand[thick] (f2)
				to [out=90,in=-90] (c2p);
				\flipcrossings{2}
			\end{knot}
			\draw[thick](c1)
			to [out=90,in=-75] (f1);
		\end{tikzpicture}
	}.
\end{equation}
Here the trivalent vertex represents the morphism $t\colon a\rar a_1a_2$ from Equation \eqref{factoraprodmorph}. 

\subsubsection{Functors between the associated $\cat{A}$-categories}
We want to extend $\cat{C}\mapsto \underleftarrow{{\cat{C}}}$ to a 2-functor, so far we have only defined it on the objects of $\BTCA$. 

\begin{df}\label{associatedAfunctor}
	Let $(F,\mu_{-1},\mu_0,\mu_1)\colon \cat{C}\rar \cat{C}'$ be a morphism in $\BTCA$, then \emph{the associated $\cat{A}$-enriched functor} 
	$$
	\underleftarrow{F}\colon \underleftarrow{\cat{C}}\rar \underleftarrow{\cat{C}}',
	$$
	is the functor which acts as $F$ on objects. On morphisms, we define the morphisms $\underleftarrow{F}_{c,c'}$ in $\cat{A}$ by observing that the composite
	$$
	\cat{C}(ac,c')\xrightarrow{F_{ac,c'}} \cat{C}'(F(ac),F(c'))\cong\cat{C}'(aF(c),F(c')),
	$$
	gives for each $c,c'\in\cat{C}$ a natural transformation from $\cat{C}(-c,c')\colon \cat{A}\rar \Vect$ to $\cat{C}'(-F(c),F(c'))\colon \cat{A}\rar \Vect$. In this composite the last isomorphism is induced by the composite of $\mu_1$ and $\mu_{0}$. Using the Yoneda embedding, the natural transformation defined in this way induces a morphism
	$$
	\underleftarrow{F}_{c,c'}\colon \underleftarrow{\cat{C}}(c,c')\rar \underleftarrow{\cat{C}}'(Fc,Fc').
	$$
	This morphism takes the mate $\bar{f}\colon ac\rar c'$ for a morphism $f\colon c\rar_a c'$ to $\overline{\underleftarrow{F}(f)}\colon aF(c)\xrightarrow{\cong} F(ac)\xrightarrow{\bar{f}} F(c')$, where the first map is the composite of $\mu_1$ and $\mu_{0}$. 
\end{df}

We need to check that the functor $\underleftarrow{F}$ defined in this way is indeed a $\cat{A}$-enriched functor:

\begin{lem}
	The map $\underleftarrow{F}$ defined above respects composition.
\end{lem}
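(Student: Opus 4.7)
The plan is to verify the identity $\underleftarrow{F}(f'\circ f)=\underleftarrow{F}(f')\circ\underleftarrow{F}(f)$ by passing to mates in $\cat{C}'$, since an $\cat{A}$-morphism is determined by its mate via the adjunction in \eqref{enrichedhomdef}. So I would fix composable arrows $f\colon c\rar_a c'$ and $f'\colon c'\rar_{a'}c''$ in $\underleftarrow{\cat{C}}$ and unpack both sides into mates $a'a F(c)\rar F(c'')$ in $\cat{C}'$.

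First, using the composition formula \eqref{matescomp} in $\underleftarrow{\cat{C}}'$, the mate of $\underleftarrow{F}(f')\circ\underleftarrow{F}(f)$ is
$$a'a F(c)\xrightarrow{\id_{a'}\otimes \overline{\underleftarrow{F}(f)}} a'F(c')\xrightarrow{\overline{\underleftarrow{F}(f')}} F(c''),$$
which by Definition \ref{associatedAfunctor} expands into a string containing an inner copy of $\mu_1$ (and $\mu_0$) converting $aF(c)\cong F(ac)$, followed by $F(\bar f)$, and then an outer copy converting $a'F(c')\cong F(a'c')$ followed by $F(\bar f')$. On the other side, using formula \eqref{matescomp} in $\underleftarrow{\cat{C}}$ first and then Definition \ref{associatedAfunctor}, the mate of $\underleftarrow{F}(f'\circ f)$ is
$$a'a F(c)\xrightarrow{\cong} F(a'ac)\xrightarrow{F(\id_{a'}\otimes \bar f)} F(a'c')\xrightarrow{F(\bar f')} F(c''),$$
where now the single isomorphism $a'aF(c)\cong F(a'ac)$ is built from an iterated $\mu_1$ (and $\mu_0$).

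The equality of the two expressions therefore reduces to commutativity of the diagram comparing these two paths from $a'aF(c)$ to $F(a'c')$. I would split this into two pieces: first, the naturality of $\mu_1$ applied to the morphism $\id_{a'}\otimes \bar f\colon a'\otimes ac\rar a'\otimes c'$ in $\cat{C}$ forces the square
$$\begin{tikzcd}
F(a')\otimes F(ac)\arrow[r,"\id\otimes F(\bar f)"]\arrow[d,"\mu_1^{-1}"]& F(a')\otimes F(c')\arrow[d,"\mu_1^{-1}"]\\
F(a'\otimes ac)\arrow[r,"F(\id_{a'}\otimes \bar f)"]& F(a'\otimes c')
\end{tikzcd}$$
to commute; second, the monoidality/coherence axiom for $(F,\mu_{-1},\mu_1)$ guarantees that the iterated tensorator $F(a'ac)\cong F(a')\otimes F(ac)\cong F(a')\otimes F(a)\otimes F(c)$ agrees (modulo associators) with $F(a'ac)\cong F(a'a)\otimes F(c)\cong F(a')\otimes F(a)\otimes F(c)$, and the monoidality of $\mu_0$ identifies the outputs consistently with $a'aF(c)$.

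The main obstacle is purely bookkeeping: tracking the interleaving of $\mu_0$, $\mu_1$ and the associators of $\cat{C}'$ so that the single iterated isomorphism $a'aF(c)\cong F(a'ac)$ in one description really coincides with the two-step decomposition $a'aF(c)\cong a'F(ac)\cong F(a'c')$ appearing in the other, once $F(\id_{a'}\otimes\bar f)$ is commuted across. Both ingredients—naturality of the tensorator and the coherence of $\mu_0,\mu_1$—are built into the definition of a (braided) tensor functor, so no further input is needed.
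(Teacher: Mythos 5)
Your proposal is correct and follows essentially the same route as the paper's proof: both sides are unpacked into their mates in $\cat{C}'$, and the comparison reduces to naturality of the structure isomorphism $\mu_1$ (to commute $F(\bar f)$ past the tensorator) together with the coherence of $\mu_0,\mu_1$ identifying the iterated isomorphism $a'aF(c)\cong F(a'ac)$ with the two-step decomposition. Your write-up is slightly more explicit about the coherence bookkeeping, but the argument is the same.
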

\begin{proof}
	We need to show that for all $c,c',c''\in\cat{C}$:
	\begin{center}
		\begin{tikzcd}
		\underleftarrow{\cat{C}}(c,c')\underleftarrow{\cat{C}}(c',c'')\arrow[r,"\circ"] \arrow[d,"\underleftarrow{F}_{c,c'}\otimes \underleftarrow{F}_{c',c''}"]	&	\underleftarrow{\cat{C}}(c,c'') \arrow[d,"\underleftarrow{F}_{c,c''}"]\\
		\underleftarrow{\cat{C}}'(Fc,Fc')\underleftarrow{\cat{C}}'(Fc',Fc'') \arrow[r,"\circ"] & \underleftarrow{\cat{C}}'(Fc,Fc'').
		\end{tikzcd}
	\end{center}
	On mates for $f\colon c\rar_{a} c'$ and $f'\colon c'\rar_{a'}c''$, the top route computes as
	$$
	a'aFc \xrightarrow{\cong} F(a'ac)\xrightarrow{\bar{f}'\circ (\id_{a'}\otimes \bar{f})} F(c''),
	$$
	whereas the bottom route becomes
	$$
	a'aFc \xrightarrow{\cong} a'F(ac) \xrightarrow{\id_{a'}F(\bar{f})} a'F(c') \xrightarrow{\cong} F(a'c') \xrightarrow{F(\bar{f}')} F(c''). 
	$$
	Using the fact that the structure isomorphisms for $F$ are natural, we can exchange the middle two morphisms to get
	$$
	a'a F(c) \xrightarrow{\cong} F(a'ac) \xrightarrow{F(\bar{f}'\circ (\id_{a} \otimes \bar{\underleftarrow{F}(f)}))} F(c''),
	$$
	where we have also used the fact that $F$ preserves composition, and that the monoidality isomorphisms for $aF(c)$ and $a'F(ac)$ compose to the monoidality isomorphism for $F(a'ac)$.
\end{proof}

For natural transformations, we use the following.

\begin{df}
		Let $\kappa\colon F\Rightarrow G$ be a 2-morphism between two morphisms in $\BTCA$ between $\cat{C}$ and $\cat{C}'$. Then the \emph{associated $\cat{A}$-enriched natural transformation $\underleftarrow{{\kappa}}\colon \underleftarrow{{F}}\Rightarrow \underleftarrow{{G}}$} is given by the mate to $\kappa$.
\end{df}

As we have added additional morphisms when defining $\underleftarrow{\cat{C}}$, we need to check this definition makes sense:

\begin{lem}\label{assnattrafonatural}
	The associated $\cat{A}$-enriched natural transformation $\underleftarrow{{\kappa}}$ for a 2-morphism $\kappa:(F,\mu_{-1},\mu_{0},\mu_1)\Rightarrow(G,\nu_{-1},\nu_{0},\nu_1)$ is indeed natural.
\end{lem}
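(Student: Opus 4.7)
The plan is to reduce the enriched naturality of $\underleftarrow{\kappa}$---the commutativity in $\cat{A}$ of the square with legs $\underleftarrow{\kappa}_{c'}\circ\underleftarrow{F}_{c,c'}$ and $(-\circ\underleftarrow{\kappa}_c)\circ\underleftarrow{G}_{c,c'}$---to an identity of morphisms in $\cat{C}'$. By the representing property of $\underleftarrow{\cat{C}}'(F(c),G(c'))$ in Equation~\eqref{enrichedhomdef}, two morphisms in $\cat{A}$ landing there agree as soon as their composites with the mating isomorphism $\cat{A}(a,-)\cong\cat{C}'(a(-),-)$ agree for every $a\in\cat{A}$. Equivalently, it suffices to verify commutativity at the level of mates of an arbitrary morphism $f\colon c\rar_{a}c'$.

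Next I would spell out both mates explicitly. Using Definition~\ref{associatedAfunctor} together with the fact that the mate of $\underleftarrow{\kappa}_{c'}$ is $\kappa_{c'}$ itself, the top leg sends the mate of $f$ to
$$aF(c)\xrightarrow{\cong}F(ac)\xrightarrow{F(\bar{f})}F(c')\xrightarrow{\kappa_{c'}}G(c'),$$
where the first isomorphism is built from $\mu_{0}^{-1}$ and $\mu_{1}^{-1}$. Using Equation~\eqref{matescomp} for the composition of mates, the bottom leg sends it to
$$aF(c)\xrightarrow{\id_{a}\otimes\kappa_{c}}aG(c)\xrightarrow{\cong}G(ac)\xrightarrow{G(\bar{f})}G(c'),$$
with the second isomorphism built analogously from $\nu_{0}^{-1}$ and $\nu_{1}^{-1}$.

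Ordinary naturality of $\kappa$ applied to $\bar{f}\colon ac\rar c'$ turns $\kappa_{c'}\circ F(\bar{f})$ into $G(\bar{f})\circ\kappa_{ac}$, so the claim reduces to the commuting square $\kappa_{ac}\circ\mu_{1}^{-1}(\mu_{0}^{-1}\otimes\id)=\nu_{1}^{-1}(\nu_{0}^{-1}\otimes\id)\circ(\id_{a}\otimes\kappa_{c})$ in $\cat{C}'$. Monoidality of $\kappa$ rewrites the left-hand side as $\nu_{1}^{-1}((\kappa_{a}\circ\mu_{0}^{-1})\otimes\kappa_{c})$, after which the defining condition $\nu_{0}\circ\kappa|_{\cat{A}}=\mu_{0}$ on 2-morphisms of $\BTCA$ (Definition~\ref{bfcdef}) identifies $\kappa_{a}\circ\mu_{0}^{-1}$ with $\nu_{0}^{-1}$ and closes the square. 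The main obstacle is purely organisational---tracking the directions of $\mu_{-1},\mu_{0},\mu_{1}$ (and their $\nu$-counterparts) through the two mating identifications; the mathematical content is delivered by exactly the two conditions built into a 2-morphism of $\BTCA$, namely the monoidality of $\kappa$ and its compatibility with the inclusion of $\cat{A}$.
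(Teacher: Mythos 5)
Your proposal is correct and follows essentially the same route as the paper's proof: reduce to an identity of mates in $\cat{C}'$, apply ordinary naturality of $\kappa$ to $\bar{f}$, then monoidality of $\kappa$, and finally the condition $\nu_{0}\circ\kappa|_{\cat{A}}=\mu_{0}$ to identify $\kappa_{a}\circ\mu_{0}^{-1}$ with $\nu_{0}^{-1}$. The paper carries out exactly this chain of three rewrites in string-diagram form rather than in equations, but the content and the order of the steps coincide.
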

\begin{proof}
	In shorthand notation, we need to check that for any $f\colon c\rar_{a}d$ we have that $\underleftarrow{{G}}(f)\underleftarrow{{\kappa}}_c=\underleftarrow{{\kappa}}_d\underleftarrow{{F}}(f)$. In string diagrams, the left hand side is, in terms of mates,
	$$
	\hbox{
		\begin{tikzpicture}[baseline=(current  bounding  box.center)]
		
		\node (a) at (-0.5,0){$a$};
		\node (Fc) at (0.5,0){$Fc$};
		
		\node (k) at (0.5,0.65)[draw]{$\kappa_c$};
		\node (n0) at (-0.5,1.25)[draw]{$\nu_0^{-1}$};
		
		\node (n1) at (0.5,2.25)[draw,minimum width=1]{$\nu_1^{-1}$};
		
		\node (Gf) at (0.5,3.5)[draw]{$Gf$};
		
		\coordinate (out) at (0.5,4);

		\begin{knot}[clip width=4]
			\strand [thick] (Fc) to (k);
			\strand [thick] (a) to (n0);
			\strand [thick] (n0) to[out=90,in=-90] (n1.-110);
			\strand [thick] (k) to[out=90,in=-90] (n1.-70);
			\strand [thick] (n1.110) to[out=90,in=-90] (Gf.-110);
			\strand [thick] (n1.70) to[out=90,in=-90] (Gf.-70);
			\strand [thick] (Gf) to (out);
		\end{knot}
	\end{tikzpicture}
	},
	$$
	whereas the right hand side gives
	$$
	\hbox{
	\begin{tikzpicture}[baseline=(current  bounding  box.center)]
	
	\node (a) at (-0.5,0){$a$};
	\node (Fc) at (0.5,0){$Fc$};

	\node (n0) at (-0.5,0.6)[draw]{$\mu_0^{-1}$};
	
	\node (n1) at (0.5,1.75)[draw,minimum width=1]{$\mu_1^{-1}$};
	
	\node (Gf) at (0.5,2.75)[draw]{$Ff$};
	
	\node (kap) at (0.5,3.5)[draw]{$\kappa_{d}$};
	
	\coordinate (out) at (0.5,4);

	\begin{knot}[clip width=4]
		\strand [thick] (Fc) to[out=90,in=-90] (n1.-70);
		\strand [thick] (a) to (n0);
		\strand [thick] (n0) to[out=90,in=-90] (n1.-110);
		\strand [thick] (n1.110) to[out=90,in=-90] (Gf.-110);
		\strand [thick] (n1.70) to[out=90,in=-90] (Gf.-70);
		\strand [thick] (Gf) to[out=90,in=-90] (kap);
		\strand [thick] (kap) to (out);
	\end{knot}
	\end{tikzpicture}
	}
	=
	\hbox{
		\begin{tikzpicture}[baseline=(current  bounding  box.center)]
		
		\node (a) at (-0.5,0){$a$};
		\node (Fc) at (0.5,0){$Fc$};

		\node (n0) at (-0.5,0.6)[draw]{$\mu_0^{-1}$};
		
		\node (n1) at (0.5,1.75)[draw,minimum width=1]{$\mu_1^{-1}$};
		
		\node (kap) at (0.5,2.75)[draw]{$\kappa_{ac}$};
		
		\node (Gf) at (0.5,3.5)[draw]{$Gf$};
		
		\coordinate (out) at (0.5,4);

		\begin{knot}[clip width=4]
			\strand [thick] (Fc) to[out=90,in=-90] (n1.-70);
			\strand [thick] (a) to (n0);
			\strand [thick] (n0) to[out=90,in=-90] (n1.-110);
			\strand [thick] (n1.110) to[out=90,in=-90] (kap.-110);
			\strand [thick] (n1.70) to[out=90,in=-90] (kap.-70);
			\strand [thick] (kap.110) to[out=90,in=-90] (Gf.-110);
			\strand [thick] (kap.70) to[out=90,in=-90] (Gf.-70);
			\strand [thick] (Gf) to (out);
		\end{knot}
		\end{tikzpicture}
	}
	=
	\hbox{
		\begin{tikzpicture}[baseline=(current  bounding  box.center)]
		
		\node (a) at (-0.5,0){$a$};
		\node (Fc) at (0.5,0){$Fc$};
		
		\node (k) at (0.5,0.65)[draw]{$\kappa_c$};
		\node (n0) at (-0.5,0.6)[draw]{$\mu_0^{-1}$};
		
		\node (ka) at (-0.5,1.25)[draw]{$\kappa_a$};
		
		\node (n1) at (0.5,2.25)[draw,minimum width=1]{$\nu_1^{-1}$};
		
		\node (Gf) at (0.5,3.5)[draw]{$Gf$};
		
		\coordinate (out) at (0.5,4);

		\begin{knot}[clip width=4]
			\strand [thick] (Fc) to (k);
			\strand [thick] (a) to (n0);
			\strand [thick] (n0) to (ka);
			\strand [thick] (ka) to[out=90,in=-90] (n1.-110);
			\strand [thick] (k) to[out=90,in=-90] (n1.-70);
			\strand [thick] (n1.110) to[out=90,in=-90] (Gf.-110);
			\strand [thick] (n1.70) to[out=90,in=-90] (Gf.-70);
			\strand [thick] (Gf) to (out);
		\end{knot}
	\end{tikzpicture}
}
	=
	\hbox{
		\begin{tikzpicture}[baseline=(current  bounding  box.center)]
		
		\node (a) at (-0.5,0){$a$};
		\node (Fc) at (0.5,0){$Fc$};
		
		\node (k) at (0.5,0.65)[draw]{$\kappa_c$};
		\node (n0) at (-0.5,1.25)[draw]{$\nu_0^{-1}$};
		
		\node (n1) at (0.5,2.25)[draw,minimum width=1]{$\nu_1^{-1}$};
		
		\node (Gf) at (0.5,3.5)[draw]{$Gf$};
		
		\coordinate (out) at (0.5,4);

		\begin{knot}[clip width=4]
			\strand [thick] (Fc) to (k);
			\strand [thick] (a) to (n0);
			\strand [thick] (n0) to[out=90,in=-90] (n1.-110);
			\strand [thick] (k) to[out=90,in=-90] (n1.-70);
			\strand [thick] (n1.110) to[out=90,in=-90] (Gf.-110);
			\strand [thick] (n1.70) to[out=90,in=-90] (Gf.-70);
			\strand [thick] (Gf) to (out);
		\end{knot}
	\end{tikzpicture}
	},
	$$
	where we have used naturality of $\kappa$, monoidality of $\kappa$, and the relation $\nu_{0}\kappa|_\cat{A}=\mu_{0}$, consecutively.
\end{proof}

\subsubsection{Enriched monoidal structure}
	In this section we start making use of the braiding on $\cat{C}$, a braided tensor category containing $\cat{A}$. The tensor product on $\cat{C}$ together with the braiding between the objects of $\cat{A}$ and those of $\cat{C}$ induces an associated $\cat{A}$-monoidal structure on $\underleftarrow{\cat{C}}$ (and similarly on $\underrightarrow{\cat{C}}$). This $\cat{A}$-monoidal structure is defined as follows.
	
\begin{df}\label{inducedatensor}
	 The \emph{induced $\cat{A}$-tensor product on $\underleftarrow{\cat{C}}$} is given by $\otimes_\cat{C}$ on objects. On morphisms it is given by the map
	\begin{equation}\label{proofgoal2}
	\mathop{\otimes}\limits_{\underleftarrow{\cat{C}}}\colon  \underleftarrow{\cat{C}}(c_1,c_1')\tens{\cat{A}}\underleftarrow{\cat{C}}(c_2,c_2')\rar \underleftarrow{\cat{C}}(c_1c_2,c_1'c_2'),
	\end{equation}
	which is obtained as follows. Given $f\colon c_1 \boxtimes c_2 \rar_a c'_1 \boxtimes c'_2$, pick a factorisation $(t,f_1,f_2)$ with $t\colon a \rar a_1 a_2$ and $f_i \colon c_i \rar_{a_i} c_i'$ for $i=1,2$. Map $f_1\otimes f_2$ along the composite
	\begin{align}
	\begin{split}\label{monfirstdef}
	\cat{A}(a_1,\underleftarrow{\cat{C}}(c_1,c_1'))\tens{\Vect} \cat{A}(a_2,\underleftarrow{\cat{C}}(c_2,c_2'))	&\cong \cat{C}(a_1c_1,c_1')\tens{\Vect}\cat{C}(a_2c_2,c_2')\\
	&\rlaru{\otimes_\cat{C}} \cat{C}(a_1c_1a_2c_2,c_1'c_2')\\
	&\rlaru{(\beta_{a_2,c_1})^*}\cat{C}(a_1a_2c_1c_2,c_1'c_2'),
	\end{split}
	\end{align}
	where in the first line we used the tensor product of the tensor structure on $\underleftarrow{\cat{C}}$ with itself, the monoidal structure in $\cat{C}$ in the second line and the braiding between $a_2$ and $c_1$ in the last line. The image of $f\colon c_1 \boxtimes c_2 \rar_a c'_1 \boxtimes c'_2$ under the map \eqref{proofgoal2} is defined to be the mate of the image of $f_1 \otimes f_2$ precomposed with $t \otimes \id_{c_1c2}$.
\end{df}

	Explicitly, this translates to the following. Let $f_1\colon c_1\rar_{a_1} c_1'$ and $f_2\colon c_2\rar_{a_2} c_2'$, following the above recipe we find:
	\begin{equation}\label{matestens}
	f_{1}\tens{\underleftarrow{\cat{C}}}f_2=\underline{\bar{f_1}\tens{\cat{C}}\bar{f_2}(\id_{a_1}\tens{\cat{C}} \beta_{a_2,c_1} \tens{\cat{C}} \id_{c_2})}.
	\end{equation}
	In string diagrams, this becomes
	\begin{equation}\label{matestensstring}
	\hbox{
		\begin{tikzpicture}[baseline=(current  bounding  box.center)]
		\node (C) at (0,1)[draw,minimum width=20pt,minimum height=10pt,thick]{$\bar{f}_1$};
		\node (D) at (1.5,1)[draw,minimum width=20pt,minimum height=10pt,thick]{$\bar{f}_2$};
		\node (b3) at (0,-0.5){$a_1$};
		\node (b4) at (0.5,-0.5){$a_2$};
		\node (b5) at (1,-0.5){$c_1$};
		\node (b6) at (1.5,-0.5){$c_2$};
		\node (c1) at (0,2){$c_1'$};
		\node (c2) at (1.5,2){$c_2'$};
		\begin{knot}[clip width=4]
		\strand [thick] (c1)
		to (C);
		\strand [thick] (c2)
		to (D);
		\strand [thick] (C)
		to [out=-90,in=90] (b3);
		\strand [thick] (D.-120)
		to [out=-90,in=90] (b4);
		\strand [thick] (C.-55)
		to [out=-90,in=90] (b5);
		\strand [thick] (D)
		to [out=-90,in=90] (b6);
		\end{knot}
		\end{tikzpicture}
	}.
	\end{equation}

\begin{rmk}
	The constructions up to this point only assume that $\cat{C}$ is a tensor category equipped with a central functor $\cat{A}\rar \dcentcat{C}\rar \cat{C}$. 
\end{rmk}

\begin{lem}\label{indeedatensor}
	The categories $\underleftarrow{\cat{C}}$ and $\underrightarrow{\cat{C}}$ are $\cat{A}$-monoidal for the monoidal structure from Definition \ref{inducedatensor}.
\end{lem}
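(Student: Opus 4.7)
The plan is to verify the axioms of an $\cat{A}$-monoidal structure for the bifunctor defined in \ref{inducedatensor}: (i) that $\otimes_{\underleftarrow{\cat{C}}}$ is well-defined on the $\cat{A}$-product, (ii) that it is an $\cat{A}$-enriched bifunctor, and (iii) that there are associator and unit isomorphisms satisfying the pentagon and triangle. On objects $\otimes_{\underleftarrow{\cat{C}}}=\otimes_\cat{C}$, so the associator and unitors of $\underleftarrow{\cat{C}}$ will be taken to be the images under mating of those of $\cat{C}$; the real work is in (i) and (ii).

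For $\cat{A}$-balancedness, I would check that the formula \eqref{matestens} descends along the coequaliser defining $\bxc{A}$: given $a\colon a'\to a_2$ one must match $f_1\otimes(a\cdot f_2)$ with $(a\cdot f_1)\otimes f_2$ after a straightforward identification of sources. Comparing the two string diagrams obtained from \eqref{matestensstring}, the difference is a single $a$-strand moving between a crossing over $c_1$ and its non-crossing position on the second factor, and this is precisely the naturality of $\beta_{-,c_1}$ in its first slot.

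For enriched functoriality, I would verify the interchange identity
\[
(f_1'\otimes_{\underleftarrow{\cat{C}}} f_2')\circ(f_1\otimes_{\underleftarrow{\cat{C}}} f_2)=(f_1'\circ f_1)\otimes_{\underleftarrow{\cat{C}}}(f_2'\circ f_2),
\]
by unpacking both sides with \eqref{matescompstring} and \eqref{matestensstring} into diagrams built from $\bar f_i, \bar f_i'$, $\otimes_\cat{C}$, and $\cat{A}$-over-$\cat{C}$ braidings. The two diagrams differ in how $a_1',a_2'$ are routed past $c_1$ and $a_2$: using naturality of $\beta_{-,c_1}$ to slide $a_2$ through $\bar f_2$, and then the half-braiding axiom \eqref{halfbraidreq} (equivalently the hexagon) to split $\beta_{a_1'a_2',c_1}$ into $\beta_{a_1',c_1}$ and $\beta_{a_2',c_1}$, brings the left-hand side into the right-hand form. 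Preservation of identities is immediate since the braidings in \eqref{matestens} act by identities in that case. I expect this interchange check to be the main obstacle, since it requires careful diagrammatic bookkeeping of the $\beta$-crossings.

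Once (i) and (ii) are in place, I would produce the associator and unitors of $\underleftarrow{\cat{C}}$ as the mates in $\cat{A}$ of the associator and unitors in $\cat{C}$ (their source and target are already respected on objects). The pentagon and triangle identities in $\underleftarrow{\cat{C}}$ then reduce, via Definition \ref{enrich} and \eqref{matescomp}, to the corresponding identities in $\cat{C}$: these only relate morphisms whose mates involve $\cat{C}$-associators/unitors and no $\beta$'s, so no further braiding computation is needed. The case of $\underrightarrow{\cat{C}}$ is strictly dual, using the inverse braiding of $\cat{A}$-objects past $\cat{C}$-objects in the analogue of \eqref{matestens}.
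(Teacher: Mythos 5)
Your overall route is the same as the paper's: the substantive check is the interchange law, carried out by unpacking both sides on mates via \eqref{matescompstring} and \eqref{matestensstring} and observing that the two string diagrams differ only by sliding an $\cat{A}$-strand through a box, which is naturality of the braiding in $\cat{C}$; the balancing check you add in step (i) is a reasonable (and in the paper implicit) supplement, and your use of \eqref{halfbraidreq} to split $\beta_{a_1'a_2',c_1}$ is consistent with how the diagrams in the paper are drawn. The one step you gloss over is the coherence data: it is not enough that the pentagon and triangle identities hold (these are equations between degree-$\mathbb{I}_\cat{A}$ morphisms and do reduce verbatim to $\cat{C}$, as you say). You must also verify that the associator is a \emph{natural} isomorphism of $\cat{A}$-enriched functors, i.e.\ natural against the extra morphisms of degree $a\neq\mathbb{I}_\cat{A}$; the relevant naturality squares compare $(f_1\otimes f_2)\otimes f_3$ with $f_1\otimes(f_2\otimes f_3)$ and their mates \emph{do} contain $\beta$-crossings, so your claim that ``no further braiding computation is needed'' is not quite accurate as stated. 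The gap is easily filled — by \eqref{enrichedhomdef} every such morphism has a mate $ac\rar c'$ in $\cat{C}$, and ordinary naturality of the associator of $\cat{C}$ applied to these mates (together with the fact that the two bracketings of the $\beta$-rearrangement agree by coherence) gives the required enriched naturality — but it is a check the paper makes explicitly and your write-up should too.
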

\begin{proof}
	We will only provide a proof for $\underleftarrow{\cat{C}}$, the case of $\underrightarrow{\cat{C}}$ is similar. We need to prove the structure above satisfies the interchange law, i.e. that the proposed $\cat{A}$-monoidal structure is indeed a functor. Checking functoriality boils down to checking that the following diagram commutes
	\begin{center}
		\begin{tikzcd}
		\underleftarrow{\cat{C}}(c_1\bx c_2,c_1'\bx c_2')\otimes\underleftarrow{\cat{C}}(c_1'\bx c_2',c_1''\bx c_2'')\arrow[d,"\tens{\underleftarrow{\cat{C}}}\bxc{A}\tens{\underleftarrow{\cat{C}}}"]\arrow[r,"\circ_{\left(\underleftarrow{\cat{C}}\bxc{A}\underleftarrow{\cat{C}}\right)}"]&\underleftarrow{\cat{C}}(c_1,c_1'')\otimes\underleftarrow{\cat{C}}(c_2,c_2'')\arrow[d,"\mathop{\otimes}\limits_{\underleftarrow{\cat{C}}}"]\\
		\underleftarrow{\cat{C}}(c_1c_2,c_1' c_2')\otimes\underleftarrow{\cat{C}}(c_1' c_2',c_1'' c_2'')\arrow[r,"\circ_{\underleftarrow{\cat{C}}}"]& \underleftarrow{\cat{C}}(c_1 c_2,c_1''c_2'').
		\end{tikzcd}
	\end{center}
	
	We will do this by checking that the precomposition of the two routes in this diagram with
	$$
	f_1\otimes f_2 \otimes f'_1\otimes f_2'\colon  a_1a_2a_1'a_2'\rar \underleftarrow{\cat{C}}(c_1,c_1')\underleftarrow{\cat{C}}(c_2,c_2')\underleftarrow{\cat{C}}(c_1',c_1'')\underleftarrow{\cat{C}}(c_2',c_2'')
	$$
	are the same. This will be the case if and only if their mates are equal. Using Equations \eqref{matescompstring} and \eqref{matestensstring} we see that we need to check
	\begin{align}
	\hbox{
		\begin{tikzpicture}[baseline=(current  bounding  box.center)]
		\node (A) at (-1,2.5)[draw,minimum width=20pt,minimum height=10pt,thick]{$\bar{f}'_1$};
		\node (B) at (1.5,2.5)[draw,minimum width=20pt,minimum height=10pt,thick]{$\bar{f}_2'$};
		\node (C) at (0,1)[draw,minimum width=20pt,minimum height=10pt,thick]{$\bar{f}_1$};
		\node (D) at (1.5,1)[draw,minimum width=20pt,minimum height=10pt,thick]{$\bar{f}_2$};
		\node (b1) at (-1,-0.5){$a_1'$};
		\node (b2) at (-0.5,-0.5){$a_2'$};
		\node (b3) at (0,-0.5){$a_1$};
		\node (b4) at (0.5,-0.5){$a_2$};
		\node (b5) at (1,-0.5){$c_1$};
		\node (b6) at (1.5,-0.5){$c_2$};
		\begin{knot}[clip width=4]
		\strand [thick] (B.-120)
		to [out=-90,in=90] (-0.5,1.4)
		to [out=-90,in=90] (b2);
		\strand [thick] (A)
		to [out=-60,in=90] (C);
		\strand [thick] (C)
		to [out=-90,in=90] (b3);
		\strand [thick] (-1,3)
		to  (A)
		to  [out=-90,in=90] (b1);
		\strand [thick] (D.-120)
		to [out=-90,in=90] (b4);
		\strand [thick] (C.-55)
		to [out=-90,in=90] (b5);
		\strand [thick] (1.5,3)
		to (B)
		to [out=-90,in=90] (D);
		\strand [thick] (D)
		to [out=-90,in=90] (b6);
		\end{knot}
		\end{tikzpicture}
	}
	=
	\hbox{
		\begin{tikzpicture}[baseline=(current  bounding  box.center)]
		\node (A) at (-1,2.5)[draw,minimum width=20pt,minimum height=10pt,thick]{$\bar{f}'_1$};
		\node (B) at (1.5,2.5)[draw,minimum width=20pt,minimum height=10pt,thick]{$\bar{f}_2'$};
		\node (C) at (-0.5,1)[draw,minimum width=20pt,minimum height=10pt,thick]{$\bar{f}_1$};
		\node (D) at (1.5,1)[draw,minimum width=20pt,minimum height=10pt,thick]{$\bar{f}_2$};
		\node (b1) at (-1,-0.5){$a_1'$};
		\node (b2) at (-0.5,-0.5){$a_2'$};
		\node (b3) at (0,-0.5){$a_1$};
		\node (b4) at (0.5,-0.5){$a_2$};
		\node (b5) at (1,-0.5){$c_1$};
		\node (b6) at (1.5,-0.5){$c_2$};
		\begin{knot}[clip width=4]
		\strand [thick] (B.-120)
		to [out=-90,in=90] (b2);
		\strand [thick] (A)
		to [out=-60,in=90] (C);
		\strand [thick] (C.-120)
		to [out=-90,in=90] (b3);
		\strand [thick] (-1,3)
		to  (A)
		to  [out=-90,in=90] (b1);
		\strand [thick] (D.-120)
		to [out=-90,in=90] (b4);
		\strand [thick] (C.-55)
		to [out=-90,in=90](b5);
		\strand [thick] (1.5,3)
		to (B)
		to [out=-90,in=90] (D);
		\strand [thick] (D)
		to [out=-90,in=90] (b6);
		\end{knot}
		\end{tikzpicture}
	},\end{align}
	and this equation holds by naturality of the braiding in $\cat{C}$.

	The associators in $\cat{C}$ will descend to morphisms in $\underleftarrow{\cat{C}}$ and still satisfy the pentagon equations. We have to convince ourselves that these morphisms define a natural isomorphism, with respect to the extra morphisms in the enriched hom-objects $\underleftarrow{\cat{C}}(c,c')$ for $c,c'\in\cat{C}$. Looking at Definition \ref{enrich}, all these extra morphisms are just morphisms $ac\rar c'$ for some $a\in\cat{A}$. Using the pentagon equations on these morphisms, this means the associators from $\cat{C}$ will also be natural for these extra morphisms. 
\end{proof}

\subsubsection{A second $\cat{A}$-monoidal structure}
Since we made a choice to use $\beta$ rather than $\beta^{-1}$ in Definition \ref{inducedatensor}, we also have:

\begin{df}\label{othertensor}
	We define $-\bar{\otimes} -\colon \underleftarrow{\cat{C}}\bxc{A}\underleftarrow{\cat{C}}\rar \underleftarrow{\cat{C}}$, by taking it to be $-\otimes_\cat{C} -$ on objects and on morphisms the map from Definition \ref{inducedatensor} with $\beta_{a_2,c_1}$ replaced by $\beta^{-1}_{c_1,a_2}$.
\end{df}

The proof that this indeed specifies an $\cat{A}$-monoidal structure is analogous to the proof of Lemma \ref{indeedatensor}. In string diagrams for the mates of $f_1\colon c_1\rar_{a_1} c_1'$ and $f_2\colon c_2\rar_{a_2} c_2'$ this monoidal structure gives:
\begin{equation}\label{matesothertensstring}
\hbox{
	\begin{tikzpicture}[baseline=(current  bounding  box.center)]
	\node (C) at (0,1)[draw,minimum width=20pt,minimum height=10pt,thick]{$\bar{f}_1$};
	\node (D) at (1.5,1)[draw,minimum width=20pt,minimum height=10pt,thick]{$\bar{f}_2$};
	\node (b3) at (0,-0.5){$a_1$};
	\node (b4) at (0.5,-0.5){$a_2$};
	\node (b5) at (1,-0.5){$c_1$};
	\node (b6) at (1.5,-0.5){$c_2$};
	\node (c1) at (0,2){$c_1'$};
	\node (c2) at (1.5,2){$c_2'$};
	\begin{knot}[clip width=4]
	\strand [thick] (c1)
	to (C);
	\strand [thick] (c2)
	to (D);
	\strand [thick] (C)
	to [out=-90,in=90] (b3);
	\strand [thick] (D.-120)
	to [out=-90,in=90] (b4);
	\strand [thick] (C.-55)
	to [out=-90,in=90] (b5);
	\strand [thick] (D)
	to [out=-90,in=90] (b6);
	\flipcrossings{1}
	\end{knot}
	\end{tikzpicture}
}.
\end{equation}

This second monoidal structure will be useful below when studying how the braiding on $\cat{C}$ behaves on $\underleftarrow{\cat{C}}$.

\subsection{Braiding for the associated $\cat{A}$-enriched category}
In the previous section, we only used the half-twists $\beta_{a,c}$ for $a\in \cat{A}$ and $c\in\cat{C}$, and the braiding in $\cat{A}$, this amounts to the data of a central functor $\cat{A}\rar \dcentcat{C}\rar\cat{C}$. From here onward, we will use that $\cat{C }$ is itself braided, and that $\cat{A}$ is a symmetric subcategory of $\cat{C}$.

\subsubsection{A problem with the braiding}
Naively, one might expect $\underleftarrow{\cat{C}}$ to be braided if $\cat{C}$ is, with braiding induced by the braiding in $\cat{C}$. We now pause to show that $\underleftarrow{\cat{C}}$ is not a braided $\cat{A}$-monoidal category. The failure of the braiding on $\cat{C}$ to induce a braiding on $\underleftarrow{\cat{C}}$ will motivate the next step in our construction, where we further enrich $\underleftarrow{\cat{C}}$ to a category enriched over the Drinfeld centre of $\cat{A}$.

When attempting to lift the braiding of $\cat{C}$ to a braiding on $\underleftarrow{\cat{C}}$, one encounters the following problem: the braiding will no longer be natural with respect to the additional morphisms. We will show that the naturality diagram
	\begin{center}
		\begin{tikzcd}[column sep=large]
		c_1c_2\arrow[r,"\beta_{c_1,c_2}"]\arrow[d,"f_1\otimes f_2", "a_1a_2"' very near end]& c_2c_1 \arrow[d,"f_2\otimes f_1", "a_2a_1"' very near end]\\
		c_1'c_2'\arrow[r,"\beta_{c_1',c_2'}"']& c_2'c_1'
		\end{tikzcd}
	\end{center}
	fails to commute in general. Its failure to commute can be seen as follows. In terms of mates, the naturality diagram becomes the outside of:
	\begin{center}
		\begin{tikzcd}[column sep=large]
		a_1a_2c_1c_2	\arrow[r,"\beta_{a_1,a_2}\otimes\beta_{c_1,c_2}"]		\arrow[d,"\beta_{a_2,c_1}"]				& a_2a_1c_2c_1 \arrow[d,"\beta_{a_1,c_2}"]\\
		a_1c_1a_2c_2	\arrow[r,"\beta_{a_1c_1,a_2c_2}"]						\arrow[d,"\bar{f}_1\otimes \bar{f}_2"]	& a_2c_2a_1c_1 \arrow[d,"\bar{f}_2\otimes \bar{f}_1"]\\
		c_1'c_2'		\arrow[r,"\beta_{c_1',c_2'}"']																	& c_2'c_1'
		\end{tikzcd}.
	\end{center}
	Here the braiding $\beta_{a_1,a_2}$ in the top row comes from the switch map for the $\cat{A}$-product that was implicit in the previous diagram. The map in the middle will help us understand the failure of commutativity. Note that, by naturality of the braiding in $\cat{C}$, the lower square of the diagram does commute. It therefore suffices to consider the top square, in string diagrams the top and bottom routes read
	\begin{equation}\label{failuremonodromy}
	\hbox{
		\begin{tikzpicture}[baseline=(current  bounding  box.center)]
		\node (a1) at (-0.5,-0.5){$a_1$};
		\node (a2) at (0,-0.5){$a_2$};
		\node (a3) at (0.5,-0.5){$c_1$};
		\node (a4) at (1,-0.5){$c_2$};
		
		\node (b1) at (-0.5,2){$a_2$};
		\node (b2) at (0,2){$c_2$};
		\node (b3) at (0.5,2){$a_1$};
		\node (b4) at (1,2){$c_1$};
		
		\begin{knot}[clip width=4]
		\strand [thick] (b4)
		to [out=-90,in=90] (1,1)
		to [out=-90,in=90] (a3);
		\strand [thick] (b3)
		to [out=-90,in=90] (0,1)
		to [out=-90,in=90] (a1);
		\strand [thick] (b2)
		to [out=-90,in=90] (0.5,1)
		to [out=-90,in=90] (a4);
		\strand [thick] (b1)
		to [out=-90,in=90] (-.5,1)
		to [out=-90,in=90] (a2);
		
		\end{knot}
		\end{tikzpicture}
	}
	\tn{ and }
	\hbox{
		\begin{tikzpicture}[baseline=(current  bounding  box.center)]
		\node (a1) at (-0.5,-0.5){$a_1$};
		\node (a2) at (0,-0.5){$a_2$};
		\node (a3) at (0.5,-0.5){$c_1$};
		\node (a4) at (1,-0.5){$c_2$};

		\node (b1) at (-0.5,2){$a_2$};
		\node (b2) at (0,2){$c_2$};
		\node (b3) at (0.5,2){$a_1$};
		\node (b4) at (1,2){$c_1$};
		
		\begin{knot}[clip width=4]
		\strand [thick] (b4)
		to [out=-90,in=90](0,0.5)
		to [out=-90,in=90] (a3);
		\strand [thick] (b3)
		to [out=-90,in=90] (-0.5,0.5)
		to [out=-90,in=90] (a1);
		\strand [thick] (b2)
		to [out=-90,in=90] (1,0.5)
		to [out=-90,in=90] (a4);
		\strand [thick] (b1)
		to [out=-90,in=90] (0.5,0.5)
		to [out=-90,in=90] (a2);
		\flipcrossings{3}
		\end{knot}
		\end{tikzpicture}
	},
	\end{equation}
	respectively. We see that these diagrams differ from each other by a precomposition with the \emph{braiding monodromy} $\beta_{c_1,a_2}\beta_{a_2,c_1}$ between $a_2$ and $c_1$. 

\subsubsection{Braiding between the two monoidal structures}
In this section, we show how the braiding on $\cat{C}$ can be used instead to relate the two different monoidal structures $\otimes$ (Definition \ref{inducedatensor}) and $\bar{\otimes}$ (Definition \ref{othertensor}) on $\underleftarrow{\cat{C}}$. We will do this in two ways, the first is along a functor that encodes the braiding monodromy between objects of $\cat{A}$ and objects of $\cat{C}$, the second is showing that the braiding gives a natural isomorphism between $\otimes^{\beta}$ and $\otimes \circ S$, where $S$ is the swap functor for $\cattens{\cat{A}}$. Later, we will use these results to show we can enrich $\cat{C}$ to a braided object in some sense.

\begin{df}\label{betasquaredinv}
	The \emph{inverse monodromy functor} $\beta^{-2}\colon \underleftarrow{\cat{C}}\bxc{A}\underleftarrow{\cat{C}}\rar \underleftarrow{\cat{C}}\bxc{A}\underleftarrow{\cat{C}}$ is defined as follows. The functor $\beta^{-2}$ is the identity on objects. On morphisms, 
	we take mates for $f\colon c_1\bx c_2\rar_a c_1'\bx c_2'$ factored into the tensor product between $f_1\colon c_1 \rar_{a_1} c_1'$ and $f_2\colon c_2\rar_{a_2} c_2'$, with $a_1a_2=a$, and assign
	\begin{equation}\label{inversemonodromymorph}
	\hbox{
		\begin{tikzpicture}[baseline=(current  bounding  box.center)]
		\node (a1) at (-1,-2)[minimum width=20pt,minimum height=10pt]{$a_1$};
		\node (c1) at (0.5,-2)[minimum width=20pt,minimum height=10pt]{$c_1$};
		\node (a2) at (-0.5,-2)[minimum width=20pt,minimum height=10pt]{$a_2$};
		\node (c2) at (1,-2)[minimum width=20pt,minimum height=10pt]{$c_2$};
		
		\node (f1) at (-0.75,1.5)[draw,minimum width=20pt,minimum height=10pt,thick]{$\bar{f}_1$};
		\node (f2) at (0.75,1.5)[draw,minimum width=20pt,minimum height=10pt,thick]{$\bar{f}_2$};
		
		\node (cp1) at (-0.75,2.5){$c_1'$};
		\node (cp2) at (0.75,2.5){$c'_2$};
		
		\begin{knot}[clip width=4]
		\strand [thick] (a1)
		to [out=90,in=-110] (f1);
		\strand [thick] (c1)
		to [out=90,in=-75] (f1);
		\strand [thick] (c2)
		to [out=90,in=-75] (f2);
		\strand [thick] (f1)
		to [out=90,in=-90] (cp1);
		\strand [thick] (f2)
		to [out=90,in=-90] (cp2);
		\end{knot}
		\draw [thick] (a2)
		to [out=90,in=-110] (f2);
		\end{tikzpicture}
	}
	\mapsto
		\hbox{
		\begin{tikzpicture}[baseline=(current  bounding  box.center)]
		\node (a1) at (-1,-2)[minimum width=20pt,minimum height=10pt]{$a_1$};
		\node (a2) at (-0.5,-2)[minimum width=20pt,minimum height=10pt]{$a_2$};
		\node (c1) at (0.5,-2)[minimum width=20pt,minimum height=10pt]{$c_1$};
		\node (c2) at (1,-2)[minimum width=20pt,minimum height=10pt]{$c_2$};
		\coordinate (upper) at (-0.5,1);
		\coordinate (firstcross) at (0,0.5);
		\coordinate (secondcross) at (-0.3,-0.3);
		\coordinate (unit) at (0,-1.5);
		\coordinate (backup) at (0.5,0);
		
		\node (f1) at (-0.75,2)[draw,minimum width=20pt,minimum height=10pt,thick]{$\bar{f}_1$};
		\node (f2) at (0.75,2)[draw,minimum width=20pt,minimum height=10pt,thick]{$\bar{f}_2$};
		
		\node (cp1) at (-0.75,3){$c_1'$};
		\node (cp2) at (0.75,3){$c'_2$};
		
		\begin{knot}[clip width=4]
		\strand [thick] (a1)
		to [out=90,in=-110] (f1);
		\strand [thick] (c1)
		to [out=90,in=-75] (f1);
		\strand [thick] (c2)
		to [out=90,in=-75] (f2);
		\strand [thick] (f1)
		to [out=90,in=-90] (cp1);
		\strand [thick] (f2)
		to [out=90,in=-90] (cp2);
		\strand [thick] (a2)
		to [out=90,in=-180](upper)
		to [out=0,in=90](firstcross)
		to [out=-90,in=90] (secondcross)
		to [out=-90,in=-180] (unit);
		\flipcrossings{2}
		\end{knot}
		\draw[thick] (unit)
		to [out=0,in=-90](backup)
		to [out=90,in=-110] (f2);
		\end{tikzpicture}
	}.
	\end{equation}
	We remind the reader of the convention discussed around Equation \eqref{problemdiagrams}, and emphasise that the double braiding in this diagram really is a double braiding, whereas the unresolved crossings indicate a crossing used to bring all objects of $\cat{A}$ to the left. To justify that this assignment really defines a morphism $\beta^{-2}(f)\colon a\rar \underleftarrow{\cat{C}}\cattens{A}\underleftarrow{\cat{C}}(c_1\boxtimes c_2, c_1'\boxtimes c_2')$ we compare with Equation \eqref{morphinaprod}. We can interpret the right hand side of Equation \eqref{inversemonodromymorph} as the mate for the tensor product of a morphism $a_1a_2a_2^*\rar \underleftarrow{\cat{C}}(c_1,c_1')$ and $f_2\colon c_2\rar_{a_2}c_2'$, precomposed with the with trivalent vertex $a_1a_2\rar a_1a_2a_2^*a_2$ coming from the coevaluation for $a_2^*$. 	
\end{df}

\begin{lem}
	The assignment $\beta^{-2}$ is an autofunctor of $\underleftarrow{\cat{C}}\bxc{A}\underleftarrow{\cat{C}}$.
\end{lem}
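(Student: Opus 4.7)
The plan is to verify the three functoriality conditions in turn: well-definedness on the equivalence classes of factorisations, preservation of identities, compatibility with composition, and then exhibit an inverse.

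First I would check well-definedness. A morphism in $\underleftarrow{\cat{C}}\bxc{A}\underleftarrow{\cat{C}}$ only determines the triple $(t,f_1,f_2)$ from Equation \eqref{factoraprodmorph} up to the equivalence $(t,f_1\circ g_1,f_2\circ g_2)\sim(g_1g_2\circ t,f_1,f_2)$. The recipe in \eqref{inversemonodromymorph} slides $a_2$ around using only the double braiding in $\cat{C}$ and the coevaluation for $a_2^*$, both of which are natural in $a_2$. Thus absorbing a morphism $g_2\colon a_2'\to a_2$ into $t$ versus into $\bar{f}_2$ produces the same result, and similarly for $g_1$, so $\beta^{-2}$ is well-defined on equivalence classes. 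Preservation of the identity follows by taking $t$ to be the canonical isomorphism and $f_i=\id_{c_i}$: the double braiding involves only the unit, which acts trivially.

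The main step is compatibility with composition. Given composable morphisms $(t,f_1,f_2)$ and $(t',f_1',f_2')$ with $f_i\colon c_i\to_{a_i}c_i'$ and $f_i'\colon c_i'\to_{a_i'}c_i''$, the composite in $\underleftarrow{\cat{C}}\bxc{A}\underleftarrow{\cat{C}}$ can be factored (using Equation \eqref{matescomp} in each slot) over the tensor product with total $\cat{A}$-label $a_1'a_1\otimes a_2'a_2$ after reshuffling the $\cat{A}$-strands. Applying $\beta^{-2}$ to the composite inserts a single double braiding moving the combined strand $a_2'a_2$ past $c_1$. Applying $\beta^{-2}$ to each factor separately and then composing inserts one double braiding of $a_2$ past $c_1$ and a second one of $a_2'$ past $c_1$, which stack naturally along the $c_1$-strand. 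Using naturality of the braiding in $\cat{C}$, together with the fact (from Equation \eqref{halfbraidreq} applied to the symmetric braiding viewed as a half-braiding in $\dcentcat{A}$) that $\beta^{\pm 2}_{a_2'a_2,c_1}=(\beta^{\pm 2}_{a_2',c_1}\otimes\id)\circ(\id\otimes\beta^{\pm 2}_{a_2,c_1})$ in a suitable sense, these two diagrams coincide. The hard part of the argument is tracking how the unresolved $\cat{A}$-crossings (which merely reorder tensor factors) interact with the genuine double braidings so that naturality applies; this is essentially a string-diagram calculation.

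Finally, invertibility: the same construction with the opposite sign of double braiding, using $\beta^{2}$ instead of $\beta^{-2}$ (that is, replacing the double-braided crossing in Equation \eqref{inversemonodromymorph} by its inverse), defines a functor in the same way, and the composites in either order reduce to the identity by cancelling inverse double braidings on the $c_1$-strand and then using the snake identity to erase the coevaluation-evaluation pair of $a_2$ with $a_2^*$. This shows $\beta^{-2}$ is an autofunctor, as required. The main obstacle is the bookkeeping in the composition step, since one must carefully distinguish the unresolved $\cat{A}$-strand crossings from the honest braidings in $\cat{C}$ when invoking naturality.
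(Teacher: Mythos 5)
Your proposal matches the paper's proof in essence: the paper likewise notes that invertibility is immediate from using the opposite crossings, and reduces the lemma to checking that $\beta^{-2}$ preserves composition via a string-diagram computation whose only input is naturality of the braiding in $\cat{C}$ (your extra checks of well-definedness on factorisation classes and of identity preservation are sound and implicit in the paper). One small imprecision: applying $\beta^{-2}$ to the second factor initially inserts the double braiding of $a_2'$ past $c_1'$, not past $c_1$; it only becomes a monodromy with $c_1$ after sliding it through $\bar{f}_1$ by naturality (and using transparency of $a_2'$ against $a_1$), which is exactly the step the paper's diagrams carry out.
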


\begin{proof}
	If $\beta^{-2}$ is indeed a functor, then it is clearly invertible with inverse given by using the opposite crossings in Equation \eqref{inversemonodromymorph}. So we need to check that $\beta^{-2}$ preserves composition. That is, we need to check that the following diagram commutes:
	\begin{center}
		\begin{tikzcd}
		\underleftarrow{\cat{C}}(c_1',c_1'')\underleftarrow{\cat{C}}(c_2',c_2'')\underleftarrow{\cat{C}}(c_1,c_1')\underleftarrow{\cat{C}}(c_2,c_2')	\arrow[d,"\beta^{-2}\otimes \beta^{-2}"]	\arrow[r,"\circ_{\underleftarrow{\cat{C}}\cattens{a}\underleftarrow{\cat{C}}}"]	& \underleftarrow{\cat{C}}(c_1,c_1'')\underleftarrow{\cat{C}}(c_2,c_2'')	\arrow[d,"\beta^{-2}"]	\\
		\underleftarrow{\cat{C}}(c_1',c_1'')\underleftarrow{\cat{C}}(c_2',c_2'')\underleftarrow{\cat{C}}(c_1,c_1')\underleftarrow{\cat{C}}(c_2,c_2')						\arrow[r,"\circ_{\underleftarrow{\cat{C}}\cattens{a}\underleftarrow{\cat{C}}}"]	& \underleftarrow{\cat{C}}(c_1,c_1'')\underleftarrow{\cat{C}}(c_2,c_2'').
		\end{tikzcd}
	\end{center}
		Recall that composition in terms of mates is given by Equation \eqref{matescompstring}. In terms of mates for $f_1\colon c_1\rar_{a_1} c_1'$, $f_1'\colon c_1'\rar_{a_1'} c_1''$, $f_2\colon c_2\rar_{a_2} c_2'$ and $f_2'\colon c_2'\rar_{a_2} c_2''$, the top route becomes
	\begin{equation*}
	\hbox{
		\begin{tikzpicture}[baseline=(current  bounding  box.center)]
		\node (a1p) at (-2.5,0.7){$a_1'$};
		\node (a1) at (-2,0.7){$a_1$};
		\node (c1) at (0,0.7){$c_1$};
		\node (a2p) at (-1.5,0.7){$a_2'$};
		\node (a2) at (-1,0.7){$a_2$};
		\node (c2) at (0.5,0.7){$c_2$}; 
		
		\node (f1) at (-1.25,4)[draw,minimum width=20pt,minimum height=10pt,thick]{$\bar{f}_1$};
		\node (f1p) at (-1.5,5)[draw,minimum width=20pt,minimum height=10pt,thick]{$\bar{f}_1'$};
		
		\node (f2) at (0.25,4)[draw,minimum width=20pt,minimum height=10pt,thick]{$\bar{f}_2$};
		\node (f2p) at (-0.25,5)[draw,minimum width=20pt,minimum height=10pt,thick]{$\bar{f}_2'$};
		
		\node (c1pp) at (-1.5,6){$c_1''$};
		\node (c2pp) at (-0.25,6){$c_2''$};
		
		\begin{knot}[clip width=4]
		\strand [thick] (a1p)
		to [out=90,in=-105] (f1p);
		\strand [thick] (a1)
		to [out=90,in=-105] (f1);
		\strand [thick] (c1)
		to [out = 90, in =-75] (f1);
		\strand [thick] (f1)
		to [out=90,in=-75] (f1p);
		\strand [thick] (f1p)
		to [out=90,in=-90] (c1pp);
		\strand [thick] (a2p) to [out=90,in=-90] (-1.25,2.5) to [out=90,in=-180] (-1,3.2) to [out=0,in=0] (-0.5,2) to [out=180,in=180] (-0.5,1) to [out=0,in=-105] (f2p);
		\strand [thick] (a2) to [out=90,in=-90] (-1.1,2) to [out=90,in=180] (-0.75,2.9) to [out=0,in=0] (-0.6,2.2) to [out=180,in=180] (-0.45,0.9)	to [out=0,in=-105] (f2);
		\strand [thick] (c2)
		to [out=90,in=-75] (f2);
		\strand [thick] (f2)
		to (f2p);
		\strand [thick] (f2p)
		to [out=90,in=-90] (c2pp);
		\flipcrossings{2,4}
		\end{knot}
		\end{tikzpicture}
	},
	\end{equation*}
	while the bottom route becomes, first applying $\beta^{-2}$ then the composition
	$$
	\hbox{
	\begin{tikzpicture}[baseline=(current  bounding  box.center)]
	\node (a1) at (-1,-2)[minimum width=20pt,minimum height=10pt]{$a_1'$};
	\node (a2) at (-0.6,-2)[minimum width=20pt,minimum height=10pt]{$a_2'$};
	\node (c1) at (1.5,-2)[minimum width=20pt,minimum height=10pt]{$c_1'$};
	\node (c2) at (2,-2)[minimum width=20pt,minimum height=10pt]{$c_2'$};
	\coordinate (upper) at (-0.8,1.2);
	\coordinate (firstcross) at (-0.2,0.9);
	\coordinate (secondcross) at (-0.5,-0.2);
	\coordinate (unit) at (-0.3,-1.6);
	\coordinate (backup) at (0.5,0);
	
	\node (f1) at (-0.75,2)[draw,minimum width=20pt,minimum height=10pt,thick]{$\bar{f}_1'$};
	\node (f2) at (0.25,2)[draw,minimum width=20pt,minimum height=10pt,thick]{$\bar{f}_2'$};
	
	\node (cp1) at (-0.75,3.8){$c_1''$};
	\node (cp2) at (0.25,3.8){$c''_2$};
	
	\node (sa1) at (0,-2)[minimum width=20pt,minimum height=10pt]{$a_1$};
	\node (sa2) at (0.4,-2)[minimum width=20pt,minimum height=10pt]{$a_2$};
	\node (sc1) at (2.5,-2)[minimum width=20pt,minimum height=10pt]{$c_1$};
	\node (sc2) at (3,-2)[minimum width=20pt,minimum height=10pt]{$c_2$};
	\coordinate (supper) at (1.1,1);
	\coordinate (sfirstcross) at (1.8,0.9);
	\coordinate (ssecondcross) at (1.5,-0.2);
	\coordinate (sunit) at (0.8,-1.5);
	\coordinate (sbackup) at (0.5,0);
	
	\node (sf1) at (1.25,2)[draw,minimum width=20pt,minimum height=10pt,thick]{$\bar{f}_1$};
	\node (sf2) at (2.25,2)[draw,minimum width=20pt,minimum height=10pt,thick]{$\bar{f}_2$};
	
	\node (scp1) at (1.25,3.8){$c_1'$};
	\node (scp2) at (2.25,3.8){$c'_2$};
	
	\begin{knot}[clip width=4]
	\strand [thick] (a1)
	to [out=90,in=-110] (f1);
	\strand [thick] (c1)
	to [out=90,in=-75] (f1);
	\strand [thick] (c2)
	to [out=90,in=-75] (f2);
	\strand [thick] (f1)
	to [out=90,in=-90] (cp1);
	\strand [thick] (f2)
	to [out=90,in=-90] (cp2);
	\strand [thick] (a2)
	to [out=90,in=-90](upper)
	to [out=90,in=90](firstcross)
	to [out=-90,in=90] (secondcross)
	to [out=-90,in=-90] (unit);
	\strand [thick] (sc1)
	to [out=90,in=-75] (sf1);
	\strand [thick] (sc2)
	to [out=90,in=-75] (sf2);
	\strand [thick] (sf1)
	to [out=90,in=-90] (scp1);
	\strand [thick] (sf2)
	to [out=90,in=-90] (scp2);
	\strand [thick] (supper)
	to [out=90,in=90](sfirstcross)
	to [out=-90,in=70] (ssecondcross);
	\flipcrossings{2,4}
	\end{knot}
	\draw[thick] (unit)
	to [out=90,in=-90] (f2.-110);
	\draw [thick] (sa1)
	to [out=90,in=-110] (sf1);
	\draw [thick] (sa2) to [out=90,in=-90] (supper);
	\draw [thick] (ssecondcross)
	to [out=-110,in=90] (sunit) to [out=-90,in=-90] (sf2.-110);
	\end{tikzpicture}
	}
	\mapsto
		\hbox{
		\begin{tikzpicture}[baseline=(current  bounding  box.center)]
		\node (a1p) at (-2.5,0.7){$a_1'$};
		\node (a1) at (-2,0.7){$a_1$};
		\node (c1) at (0,0.7){$c_1$};
		\node (a2p) at (-1.5,0.7){$a_2'$};
		\node (a2) at (-0.75,0.7){$a_2$};
		\node (c2) at (0.5,0.7){$c_2$}; 
		
		\node (f1) at (-1.25,3.15)[draw,minimum width=20pt,minimum height=10pt,thick]{$\bar{f}_1$};
		\node (f1p) at (-1.5,6)[draw,minimum width=20pt,minimum height=10pt,thick]{$\bar{f}_1'$};
		
		\node (f2) at (0.25,4)[draw,minimum width=20pt,minimum height=10pt,thick]{$\bar{f}_2$};
		\node (f2p) at (-0.25,6)[draw,minimum width=20pt,minimum height=10pt,thick]{$\bar{f}_2'$};
		
		\node (c1pp) at (-1.5,7){$c_1''$};
		\node (c2pp) at (-0.25,7){$c_2''$};
		
		\begin{knot}[clip width=4]
			\strand [thick] (a1p)
			to [out=90,in=-105] (f1p);
			\strand [thick] (c1)
			to [out = 90, in =-75] (f1);
			\strand [thick] (f1)
			to [out=90,in=-75] (f1p);
			\strand [thick] (f1p)
			to [out=90,in=-90] (c1pp);
			\strand [thick] (a2p) to [out=90,in=-120] (-1.6,4.8) to [out=60,in=90] (-1,4.5) to [out=-90,in=30] (-1.2,4) to [out=-150,in=110] (-1.25,1.3);
			\strand [thick] (a2) to [out=90,in=-90] (-1.1,1.5) to [out=90,in=180] (-0.9,2.5) to [out=0,in=90] (-0.8,1.6);
			\strand [thick] (c2)
			to [out=90,in=-75] (f2);
			\strand [thick] (f2)
			to (f2p);
			\strand [thick] (f2p)
			to [out=90,in=-90] (c2pp);
			\flipcrossings{2,4}
		\end{knot}
			\draw [thick] (a1) to [out=90,in=-105] (f1);
			\draw [thick] (-0.8,1.6) to [out=-90,in=120] (-0.5,1.1) to [out=-60,in=-90] (f2.-110);
			\draw [thick] (-1.25,1.3) to [out=-70,in=-90] (f2p.-110);
	\end{tikzpicture}
	}.
	$$
	Using the naturality of the braiding, we see that the string diagrams corresponding to the top and bottom routes are indeed equal. 
\end{proof}

The inverse monodromy functor can be used to obtain the two monoidal structures on $\underleftarrow{\cat{C}}$ from each other:

\begin{lem}\label{otimesbetaisotimesafterbeta}
	The functor $-\bar{\otimes} -\colon\underleftarrow{\cat{C}}\cattens{\cat{A}}\underleftarrow{\cat{C}}\rar \underleftarrow{\cat{C}}$ is equal to the functor obtained by precomposing $-\otimes_{\underleftarrow{\underline{\cat{C}}}}-$ with $\beta^{-2}$.
\end{lem}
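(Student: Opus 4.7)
On objects, both functors act as $-\otimes_\cat{C} -$, since $\beta^{-2}$ is the identity on objects (Definition \ref{betasquaredinv}) and both enriched tensor products were defined to agree with $\otimes_\cat{C}$ on objects (Definitions \ref{inducedatensor} and \ref{othertensor}). The content of the lemma is therefore agreement on morphisms, which I will check at the level of mates using the string-diagram presentations already established.

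Let $f\colon c_1\boxtimes c_2\rar_a c_1'\boxtimes c_2'$ be factored as $(t,f_1,f_2)$ with $f_i\colon c_i\rar_{a_i}c_i'$ as in Equation \eqref{factoraprodmorph}. Following Equation \eqref{inversemonodromymorph}, applying $\beta^{-2}$ inserts an inverse double braiding of the $a_2$-strand around the $c_1$-strand, i.e.\ the endomorphism $\bigl(\beta_{c_1,a_2}\,\beta_{a_2,c_1}\bigr)^{-1}$ of $a_2\otimes c_1$; then applying $\otimes_{\underleftarrow{\cat{C}}}$ according to Equation \eqref{matestensstring} composes this with the positive crossing $\beta_{a_2,c_1}$ before feeding into $\bar f_1\otimes_\cat{C}\bar f_2$. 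On the other hand, Equation \eqref{matesothertensstring} describes the mate of $f_1\otimes^\beta f_2$ as the same configuration with a single inverse crossing $\beta_{c_1,a_2}^{-1}$ in place of those two features.

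The proof thus reduces to the elementary identity
\[
\beta_{a_2,c_1}\circ\bigl(\beta_{c_1,a_2}\,\beta_{a_2,c_1}\bigr)^{-1}=\beta_{c_1,a_2}^{-1}
\]
in $\cat{C}$, applied locally to the $a_2/c_1$ portion of the diagram. Naturality of the braiding in $\cat{C}$ handles reconciling the remaining unresolved crossings and commuting the $\bar f_i$ past them. The main bookkeeping obstacle is the convention around Equation \eqref{problemdiagrams} that $\cat{A}$-strands are kept to the left via unresolved crossings, together with the need to check independence of the chosen factorisation $(t,f_1,f_2)$; the latter is automatic, however, since $\beta^{-2}$ and both tensor products are defined by canonical isomorphisms and therefore descend to the equivalence classes recalled after Equation \eqref{factoraprodmorph}. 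No input beyond the braiding on $\cat{C}$ and duality in $\cat{A}$ is required.
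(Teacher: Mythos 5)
Your proposal is correct and follows essentially the same route as the paper: both arguments note the functors agree on objects by definition and then reduce the comparison on morphisms, via mates and the string-diagram presentations of Equations \eqref{matestensstring}, \eqref{matesothertensstring} and \eqref{inversemonodromymorph}, to the observation that the positive crossing $\beta_{a_2,c_1}$ composed with the inverse monodromy $(\beta_{c_1,a_2}\beta_{a_2,c_1})^{-1}$ is the single inverse crossing $\beta_{c_1,a_2}^{-1}$. The only difference is cosmetic: you state this braid identity algebraically (and add a remark on independence of the factorisation), whereas the paper exhibits it directly as an equality of string diagrams.
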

\begin{proof}
	The functors agree on objects by definition, so we only need to check the functors agree on morphisms. Let $f_1\colon c_1\rar_{a_1} c_1'$ and $f_2\colon c_2\rar_{a_2} c_2'$ be morphisms in $\underleftarrow{\cat{C}}$. The image of their mates under $-\bar{\otimes} -$ is shown in Equation \eqref{matesothertensstring}. Applying the composite of $\beta^{-2}$ and $-\otimes -$ to these mates is given, in string diagrams, by
	\begin{equation}
		\hbox{
		\begin{tikzpicture}[baseline=(current  bounding  box.center)]
		\node (a1) at (-1,-0.5)[minimum width=20pt,minimum height=10pt]{$a_1$};
		\node (a2) at (-0.5,-0.5)[minimum width=20pt,minimum height=10pt]{$a_2$};
		\node (c1) at (0.5,-0.5)[minimum width=20pt,minimum height=10pt]{$c_1$};
		\node (c2) at (1,-0.5)[minimum width=20pt,minimum height=10pt]{$c_2$};
		\coordinate (upper) at (-0.2,1.2);
		\coordinate (firstcross) at (-0.1,0.5);
		\coordinate (unit) at (0,-0.1);
		\coordinate (backup) at (0.5,0);
		
		\node (f1) at (-0.75,2)[draw,minimum width=20pt,minimum height=10pt,thick]{$\bar{f}_1$};
		\node (f2) at (0.75,2)[draw,minimum width=20pt,minimum height=10pt,thick]{$\bar{f}_2$};
		
		\node (cp1) at (-0.75,3){$c_1'$};
		\node (cp2) at (0.75,3){$c'_2$};
		
		\begin{knot}[clip width=4]
		\strand [thick] (a1)
		to [out=90,in=-110] (f1);
		\strand [thick] (c1)
		to [out=90,in=-75] (f1);
		\strand [thick] (c2)
		to [out=90,in=-75] (f2);
		\strand [thick] (f1)
		to [out=90,in=-90] (cp1);
		\strand [thick] (f2)
		to [out=90,in=-90] (cp2);
		\strand [thick] (a2)
		to [out=90,in=-180](upper)
		to [out=0,in=90](firstcross)
		to [out=-90,in=-180] (unit) to [out=0,in=-110] (f2);
		\flipcrossings{2,3}
		\end{knot}
		\end{tikzpicture}
	}
	,
	\end{equation}
	which is indeed equal to Equation \eqref{matesothertensstring}.
\end{proof}

We will now show that the braiding is a natural transformation between these two monoidal structures on $\underleftarrow{\cat{C}}$, after we compose one with the switch map.

\begin{lem}\label{braidingbetaotimes}
	The braiding in $\cat{C}$ induces a natural isomorphism between the functors $-\bar{\otimes} -\colon \underleftarrow{\cat{C}}\bxc{A}\underleftarrow{\cat{C}}\rar\underleftarrow{\cat{C}}$ and the composite of $-\otimes -\colon \underleftarrow{\cat{C}}\bxc{A}\underleftarrow{\cat{C}}\rar \underleftarrow{\cat{C}}$ with the switch map for the $\cat{A}$-product. This isomorphism satisfies the hexagon equations.
\end{lem}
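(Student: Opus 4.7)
The plan is to take the natural isomorphism to be, componentwise, the braiding $\beta_{c_1,c_2}\colon c_1\otimes c_2 \to c_2 \otimes c_1$ of $\cat{C}$, viewed as a morphism in $\underleftarrow{\cat{C}}$ with $\cat{A}$-degree $\mathbb{I}_\cat{A}$. Invertibility is then immediate from invertibility of $\beta$ in $\cat{C}$, so the work is to verify naturality and the hexagons.

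For naturality, given $f_1\colon c_1\rar_{a_1}c_1'$ and $f_2\colon c_2\rar_{a_2}c_2'$, I would compare the two routes around
\begin{center}
\begin{tikzcd}
c_1c_2 \arrow[r,"\beta_{c_1,c_2}"] \arrow[d,"f_1\otimes^\beta f_2"'] & c_2c_1 \arrow[d,"(-\otimes-)\circ S(f_1\boxtimes f_2)"] \\
c_1'c_2' \arrow[r,"\beta_{c_1',c_2'}"'] & c_2'c_1'
\end{tikzcd}
\end{center}
by passing to mates in $\cat{C}$, so that both sides become morphisms $a_1a_2c_1c_2 \to c_2'c_1'$. Using Equation \eqref{matesothertensstring} for the mate of $f_1\otimes^\beta f_2$ (post-composed with $\beta_{c_1',c_2'}$), and the analogous mate formula of Equation \eqref{matestensstring} for $f_2\otimes f_1$ (pre-composed with the switch $S$, which inserts a crossing of the $\cat{A}$-strands $a_1$ and $a_2$ via the symmetric braiding of $\cat{A}$), both string diagrams are explicit. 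They differ only in (i) the order of $\cat{A}$-strands, which is immaterial because $\cat{A}$ is symmetric (and all such reorderings agree with the convention surrounding Equation \eqref{problemdiagrams}), and (ii) whether $\beta_{c_1',c_2'}$ is applied at the top or whether $\beta_{c_1,c_2}$ is applied at the bottom; these become equal upon applying naturality of $\beta$ in $\cat{C}$ to the morphisms $\bar{f}_1$ and $\bar{f}_2$.

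For the hexagon equations, I would similarly pass to mates. Both sides of each hexagon become, after unravelling $\otimes^\beta$ via Lemma \ref{otimesbetaisotimesafterbeta} (so $\otimes^\beta = \otimes\circ \beta^{-2}$) and the definitions of $\otimes$ and $S$, equalities of morphisms in $\cat{C}$ whose only genuine content is the hexagon axioms for the braiding of $\cat{C}$ itself. All additional crossings that appear are between objects of $\cat{A}$, or between an object of $\cat{A}$ and an object of $\cat{C}$ arising from the $\cat{A}$-enrichment; the former are controlled by the symmetry of $\cat{A}$, and the latter cancel in pairs by the same trick as in the definition of $\beta^{-2}$ in Equation \eqref{inversemonodromymorph}.

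The main obstacle is the bookkeeping of $\cat{A}$-strands: one has to be disciplined about keeping $\cat{A}$-objects to the left (per the convention around Equation \eqref{problemdiagrams}), and recognise which crossings are $\cat{C}$-braidings (which matter) versus $\cat{A}$-reorderings (which do not, thanks to $\cat{A}$ being symmetric, not merely braided). Once this is done, the underlying algebra is just naturality and the hexagon for $\beta$ in $\cat{C}$.
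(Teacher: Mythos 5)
Your proposal is correct and takes essentially the same route as the paper: the isomorphism is the braiding $\beta_{c_1,c_2}$ in $\cat{A}$-degree $\mathbb{I}_\cat{A}$, naturality is checked on mates by comparing the string diagrams for Equations \eqref{matesothertensstring} and \eqref{matestensstring} (with the switch) and invoking naturality of $\beta$ in $\cat{C}$, and the hexagons are inherited from those of $\cat{C}$. The one point the paper makes explicit that you should too is that the left vertical arrow contributes $\beta_{a_2,c_1}^{-1}$ (from $\otimes^\beta$) while the right contributes $\beta_{c_2,a_1}$, and it is precisely this matching of inverse versus direct crossings that lets the two diagrams coincide after sliding $\beta$ past the $\bar{f}_i$.
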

\begin{proof}
	We want to show the diagram
	\begin{center}
		\begin{tikzcd}[column sep=large]
		c_1c_2\arrow[r,"\beta_{c_1,c_2}"]\arrow[d,"f_1\bar{\otimes} f_2", "a_1a_2"' very near end]& c_2c_1 \arrow[d,"f_2\otimes f_1", "a_2a_1"' very near end]\\
		c_1'c_2'\arrow[r,"\beta_{c_1',c_2'}"']& c_2'c_1'
		\end{tikzcd}
	\end{center}
	commutes for all $f_1\colon c_1\rar_{a_1} c_1'$ and $f_2\colon c_2\rar_{a_2} c_2'$. In terms of the mates, this diagram becomes
	\begin{center}
		\begin{tikzcd}[column sep=large]
		a_1a_2c_1c_2	\arrow[r,"\beta_{a_1,a_2}\otimes\beta_{c_1,c_2}"]		\arrow[d,"\beta_{a_2,c_1}^{-1}"]				& a_2a_1c_2c_1 \arrow[d,"\beta_{c_2,a_1}"]\\
		a_1c_1a_2c_2															\arrow[d,"\bar{f}_1\otimes \bar{f}_2"]	& a_2c_2a_1c_1 \arrow[d,"\bar{f}_2\otimes \bar{f}_1"]\\
		c_1'c_2'		\arrow[r,"\beta_{c_1',c_2'}"']																	& c_2'c_1'
		\end{tikzcd}.
	\end{center}
	Writing this in terms of string diagrams
	\begin{equation*}
	\hbox{
		\begin{tikzpicture}[baseline=(current  bounding  box.center)]
		
		\node (a1) at (-1.5,-2){$a_1$};
		\node (a2) at (-0.75,-2){$a_2$};
		\node (c1) at (0,-2){$c_1$};
		\node (c2) at (0.75,-2){$c_2$}; 
		
		\node (f1) at (-1.25,0)[draw,minimum width=20pt,minimum height=10pt,thick]{$\bar{f}_1$};
		\node (f2) at (0.25,0)[draw,minimum width=20pt,minimum height=10pt,thick]{$\bar{f}_2$};
		
		\node (c1p) at (0.25,2){$c_1'$};
		\node (c2p) at (-1.25,2){$c_2'$};
		
		\begin{knot}[clip width=4]
		\strand [thick] (a1)
		to [out=90,in=-105] (f1); 
		\strand [thick] (a2)
		to [out=90,in=-105] (f2);
		\strand [thick] (c1)
		to [out=90,in=-75] (f1);
		\strand [thick] (c2)
		to [out=90,in=-75] (f2);
		\strand [thick] (f1)
		to [out=90,in=-90] (0.25,1.5)
		to [out=90,in=-90] (c1p);
		\strand [thick] (f2)
		to [out=90,in=-90] (-1.25,1.5)
		to [out=90,in=-90] (c2p);
		\flipcrossings{1}
		\end{knot}
		\end{tikzpicture}
	}=
	\hbox{
		\begin{tikzpicture}[baseline=(current  bounding  box.center)]
		
		\node (a1) at (-1.5,-2){$a_1$};
		\node (a2) at (-0.75,-2){$a_2$};
		\node (c1) at (0,-2){$c_1$};
		\node (c2) at (0.75,-2){$c_2$}; 
		
		\node (f1) at (0.5,1)[draw,minimum width=20pt,minimum height=10pt,thick]{$\bar{f}_1$};
		\node (f2) at (-1.25,1)[draw,minimum width=20pt,minimum height=10pt,thick]{$\bar{f}_2$};
		
		\node (c1p) at (0.5,2){$c_1'$};
		\node (c2p) at (-1.25,2){$c_2'$};
		
		\begin{knot}[clip width=4]
		\strand [thick] (a1)
		to [out=90,in=-105] (f1); 
		\strand [thick] (a2)
		to [out=90,in=-90] (-1.5,0)
		to [out=90,in=-105] (f2);
		\strand [thick] (c1)
		to [out=90,in=-90] (0.75,0)
		to [out=90,in=-75] (f1);
		\strand [thick] (c2)
		to [out=90,in=-75] (f2);
		\strand [thick] (f1)
		to [out=90,in=-90] (c1p);
		\strand [thick] (f2)
		to [out=90,in=-90] (c2p);
		\end{knot}
		\end{tikzpicture}
	}.
	\end{equation*}
	The hexagon equations follow from the hexagon equations for the braiding in $\cat{C}$.
\end{proof}

\subsection{Towards $\dcentcat{A}$-crossed braided categories}
Recall that our goal is to give a tensor product of braided tensor categories containing $\cat{A}$. To achieve this, we want a construction that produces from such categories enriched categories that are braided, and will take a product of these that takes braided categories to braided categories. To ensure that our construction gives a braided object, we will enrich our category $\underleftarrow{\cat{C}}$ further to a $\dcentcat{A}$-enriched category $\underleftarrow{\underline{\cat{C}}}$, where we take care to encode the braiding monodromy from Equation \eqref{failuremonodromy} into the half-braidings we pick on our hom-objects. As the swap map for the convolution product of $\dcentcat{A}$-enriched categories uses these half-braidings, we can use this to cancel the failure of the naturality of the braiding.

\subsubsection{The $\dcentcat{A}_s$-enrichment}\label{dcentcataenrichment}

We will now show that the $\cat{A}$-enrichment from the previous sections can be lifted to an enrichment over $\dcentcat{A}_s=(\dcentcat{A},\otimes_s)$. We will need to define an enriched hom-functor with values in $\dcentcat{A}$, the composition, and the identity morphisms. We will denote the resulting $\dcentcat{A}_s$-enriched category by $\underleftarrow{\underline{\cat{C}}}$.

The first step towards enriching $\cat{C}$ over $\dcentcat{A}_s$ is: 

\begin{df}\label{halfbraidequip}
	Let $\cat{C}$ be a braided tensor category containing $\cat{A}$ as a braided subcategory. The \emph{$\dcentcat{A}_s$-enriched hom-object $\underleftarrow{\underline{\cat{C}}}(c,c')$} between $c, c' \in \cat{C}$ is defined as follows. We set
	$$
	\underleftarrow{\underline{\cat{C}}}(c,c')=(\underleftarrow{\cat{C}}(c,c'),\mathfrak{b}),
	$$
	where the half-braiding $\mathfrak{b}$ is defined by:
	\begin{equation}\label{halfbraidingonhomobjects}
	a\underleftarrow{\cat{C}}(c,c')\xrightarrow{\cong}\underleftarrow{\cat{C}}(c,ac')\xrightarrow{(\beta^{-1}_{a,c'}\beta_{c',a}^{-1})_*} \underleftarrow{\cat{C}}(c,ac')\xrightarrow{\cong}a\underleftarrow{\cat{C}}(c,c')\xrightarrow{s}\underleftarrow{\cat{C}}(c,c')a.
	\end{equation}
	Here $s$ denotes the symmetry in $\cat{A}$. 
\end{df}

For this definition to make sense, we need to show that the half-braiding $\mathfrak{b}$ is monoidal (cf. Equation \eqref{halfbraidreq}):

\begin{lem}
	The half-braiding $ -\otimes_{\cat{A}}\underleftarrow{\cat{C}}(c,c')\Rightarrow\mathfrak{b}\colon \underleftarrow{\cat{C}}(c,c')\otimes_{\cat{A}}- $ is a monoidal natural isomorphism between functors $\cat{A}\rar \cat{A}$. 
\end{lem}
\begin{proof}
	Using that $a=\underleftarrow{\cat{C}}(\mathbb{I},a)$, we can unpack the half-braiding from Equation \eqref{halfbraidingonhomobjects} in terms of the mates for $\underline{\id_a}\colon \mathbb{I}\rar_{a} a$ and $f_2\colon c\rar_{a'} c'$ as
	\begin{equation}\label{braidingonmates}
	\hbox{
		\begin{tikzpicture}[baseline=(current  bounding  box.center)]
		
		\node (a1) at (-1.5,0){$a$};
		\node (a2) at (-0.75,0){$a'$};
		\node (c1) at (0,0){$c$};

		\coordinate (f1) at (-1.5,1.5);
		\node (f2) at (-0.25,1.5)[draw,minimum width=20pt,minimum height=10pt,thick]{$\bar{f}_2$};
		
		\node (a) at (-1.5,3){$a$};
		\node (c2p) at (-0.25,3){$c'$};
		
		\begin{knot}[clip width=4]
		\strand [thick] (a1)
		to [out=90,in=-90] (f1); 
		\strand [thick] (a2)
		to [out=90,in=-105] (f2);
		\strand [thick] (c1)
		to [out=90,in=-75] (f2);
		\strand [thick] (f1)
		to [out=90,in=-90] (a);
		\strand [thick] (f2)
		to [out=90,in=-90] (c2p);
		\end{knot}
		\end{tikzpicture}
	}
	\mapsto
	\hbox{
		\begin{tikzpicture}[baseline=(current  bounding  box.center)]
		
		\node (a1) at (-1.5,0){$a$};
		\node (a2) at (-0.4,0){$a'$};
		\node (c1) at (0.4,0){$c$};

		\coordinate (f1) at (-0.6,2.8);
		\node (f2) at (-0.0,1.5)[draw,minimum width=20pt,minimum height=10pt,thick]{$\bar{f}_2$};
		
		\node (a) at (0.4,3.5){$a$};
		\node (c2p) at (-1.4,3.5){$c'$};
		
		\coordinate (upper) at (-0.8,1.3);
		\coordinate (unit) at (0,0.3);
		\coordinate (uctrl) at (0.4,1.25);

		\begin{knot}[clip width=4]
		\strand [thick] (a1)
		to [out=90,in=-180] (f1) to [out=0,in=90] (upper)
		to [out=-90, in=-180] (unit); 
		\strand [thick] (a2)
		to [out=90,in=-105] (f2);
		\strand [thick] (c1)
		to [out=90,in=-75] (f2);
		\strand [thick] (f2)
		to [out=90,in=-90] (c2p);
		\flipcrossings{1,3}
		\end{knot}
		\draw[thick] (unit)
		to [out=0,in=-90] (uctrl)
		to [out=90,in=-90] (a);
		\end{tikzpicture}
	}
	=
	\hbox{
		\begin{tikzpicture}[baseline=(current  bounding  box.center)]
		
		\node (a1) at (-1.5,0){$a$};
		\node (a2) at (-0.85,0){$a'$};
		\node (c1) at (0.4,0){$c$};

		\coordinate (f1) at (-0.6,2);
		\node (f2) at (-1.4,2.6)[draw,minimum width=20pt,minimum height=10pt,thick]{$\bar{f}_2$};
		
		\node (a) at (0.4,3.5){$a$};
		\node (c2p) at (-1.4,3.5){$c'$};

		\coordinate (unit) at (-0.2,0.3);
		\coordinate (uctrl) at (0.4,1.25);

		\begin{knot}[clip width=4]
		\strand [thick] (a1)
		to [out=90,in=-180] (f1); 
		\strand [thick] (a2)
		to [out=90,in=-105] (f2);
		\strand [thick] (c1)
		to [out=90,in=-75] (f2);
		\strand [thick] (f1)
		to [out=0, in=-180] (unit);
		\strand [thick] (f2)
		to [out=90,in=-90] (c2p);
		\flipcrossings{2,3}
		\end{knot}
		\draw[thick] (unit)
		to [out=0,in=-90] (uctrl)
		to [out=90,in=-90] (a);
		\end{tikzpicture},
	}
	\end{equation}
	where in the equality we have used the naturality of the braiding that the fact that objects of $\cat{A}$ are transparent to each other.The last diagram is the mate to the tensor product of a morphism $aa'a^*\rar\underleftarrow{\cat{C}}(c, c')$ and $\underline{\id}_a\colon \mathbb{I}\rar a$. 
	Now one uses that the braiding monodromy between $a,a'\in \cat{A}$ and $c\in\cat{C}$ has the property
	$$
			\hbox{
			\begin{tikzpicture}[baseline=(current  bounding  box.center)]
			
			\node (a1) at (-0.6,-0.5){$a$};
			\node (a2) at (-0.3,-0.5){$a'$};
			\node (c1) at (0,-0.5){$c$};

			\coordinate (ao1) at (-0.6,2.5);
			\coordinate (ao2) at (-0.3,2.5);
			\coordinate (co) at (0,2.5);
			
			\begin{knot}[clip width=4]
			\strand [thick] (c1) to (co);
			\strand [thick] (a1) to [out=90,in=-90] (0.3,1) to [out=90,in=-90] (ao1);
			\strand [thick] (a2) to [out=90,in=-90] (0.6,1) to [out=90,in=-90] (ao2);
			\flipcrossings{1,3}
			\end{knot}
		\end{tikzpicture}
	}
	=
		\hbox{
		\begin{tikzpicture}[baseline=(current  bounding  box.center)]
		
		\node (a1) at (-0.6,-0.5){$a$};
		\node (a2) at (-0.3,-0.5){$a'$};
		\node (c1) at (0,-0.5){$c$};

		\coordinate (ao1) at (-0.6,2.5);
		\coordinate (ao2) at (-0.3,2.5);
		\coordinate (co) at (0,2.5);
		
		\begin{knot}[clip width=4,clip radius=2pt]
			\strand [thick] (c1) to (co);
			\strand [thick] (a1)to [out=90,in=-90] (-0.6,1) to [out=90,in=-90] (0.3,1.5) to [out=90,in=-90] (-0.6,2) to [out=90,in=-90] (ao1);
			\strand [thick] (a2) to [out=90,in=-90] (-0.3,0) to [out=90,in=-90] (0.3,0.6)to [out=90,in=-90] (-0.3,1) to [out=90,in=-90] (ao2);
			\flipcrossings{3,1}
		\end{knot}
	\end{tikzpicture}
},
	$$
	as the objects in $\cat{A}$ are transparent to each other.
\end{proof}

	What we have shown so far is that every hom-object can be viewed as an object in the Drinfeld centre of $\cat{A}$. The next step is to define a composition morphism. This composition morphism will factor through the symmetric tensor product $\otimes_s$ for $\dcentcat{A}$, see Definition \ref{symtensdef}.

\begin{df}\label{compzadef}
	Let $c,c',c''\in\cat{C}$, and let $\underleftarrow{\underline{\cat{C}}}(c,c')=(\underleftarrow{\cat{C}}(c,c'),\mathfrak{b}')$ and $\underleftarrow{\underline{\cat{C}}}(c',c'')=(\underleftarrow{\cat{C}}(c',c''),\mathfrak{b}'')$ denote the lifts of the $\cat{A}$-enriched hom-objects to $\dcentcat{A}$ from Definition \ref{halfbraidequip}. Then we define the \emph{composition morphism for the $\dcentcat{A}_s$-enrichment} to be the composite
	\begin{equation}\label{ZAcomp}
	\Phi(\underleftarrow{\underline{\cat{C}}}(c',c'')\otimes_s\underleftarrow{\underline{\cat{C}}}(c,c'))\hookrightarrow \underleftarrow{\cat{C}}(c',c'')\underleftarrow{\cat{C}}(c,c') \raru{\circ} \underleftarrow{\cat{C}}(c,c''),
	\end{equation}
	on the underlying objects in $\cat{A}$.
\end{df}

In order for this definition to make sense, we need the composite from Equation \eqref{ZAcomp} to define a morphism in $\dcentcat{A}$:

\begin{lem}
	The composition from Equation \eqref{ZAcomp} defines a morphism in $\dcentcat{A}$.
\end{lem}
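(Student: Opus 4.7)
The plan is to verify the half-braiding compatibility square by computing both routes on mates of morphisms, similar to the calculation done in the proof of the previous lemma. What needs to be checked is that the composite in Equation \eqref{ZAcomp}, when viewed as a morphism of the underlying objects in $\cat{A}$, intertwines the half-braiding $\mathfrak{b}$ on $\Phi(\underleftarrow{\underline{\cat{C}}}(c',c'')\otimes_s \underleftarrow{\underline{\cat{C}}}(c,c'))$ (induced from the symmetric tensor product in $\dcentcat{A}$) with the half-braiding on $\underleftarrow{\underline{\cat{C}}}(c,c'')$ from Definition \ref{halfbraidequip}.

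First I would write out, on a test morphism $a \otimes \underleftarrow{\cat{C}}(c',c'') \otimes \underleftarrow{\cat{C}}(c,c') \to \underleftarrow{\cat{C}}(c,c'')a$, what the two routes around the naturality square look like as string diagrams. Using Equation \eqref{braidingonmates} for the half-braidings on each hom-object, the half-braiding on the codomain $\underleftarrow{\underline{\cat{C}}}(c,c'')$ contributes an inverse monodromy $\beta_{c'',a}^{-1}\beta_{a,c''}^{-1}$ (together with a symmetry swap), acting on the mate of a composite $\bar{f}' \circ (\id \otimes \bar{f})$ as in Equation \eqref{matescompstring}. On the other route, we first apply the half-braiding from the symmetric tensor product in $\dcentcat{A}$ and then compose; here the $\otimes_s$-structure (recalled from \cite{Wasserman2017}) means that the relevant half-braiding on the underlying $\cat{A}$-object of $\underleftarrow{\underline{\cat{C}}}(c',c'')\otimes_s \underleftarrow{\underline{\cat{C}}}(c,c')$ is the iterated half-braiding built from those on each factor.

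Given these presentations, the verification reduces to a string-diagram identity. The inverse double braidings $\beta_{c'',a}^{-1}\beta_{a,c''}^{-1}$ on the codomain side can be decomposed, using naturality of the braiding in $\cat{C}$ past the mates $\bar{f}$ and $\bar{f}'$, into a product of inverse double braidings of $a$ past $c'$ and past $c''$. Meanwhile, the two separate half-braidings appearing from the two factors contribute exactly these two inverse double braidings. The symmetry swaps in $\cat{A}$ on both sides agree because $\cat{A}$ is symmetric and its objects are transparent to one another, so the orderings of the $\cat{A}$-strands can be freely rearranged.

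The main obstacle is bookkeeping: one must carefully match the symmetry swaps produced by $\otimes_s$ against those produced by iterating $\mathfrak{b}$ on the codomain, and be careful that the intermediate strand $c'$ (which appears in the mate factorisation \eqref{matescompstring} but is not an object of $\cat{A}$) does not create an extra double braiding. This is precisely where the transparency of $\cat{A}$ inside $\cat{C}$ and the symmetry of $\cat{A}$ are used; once the string diagram for the composite is redrawn with $a$ pulled past the $c'$-strand via naturality of the braiding, the two routes coincide termwise.
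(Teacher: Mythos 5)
There is a genuine gap, and it lies in your identification of the half-braiding on the source object. You assert that the half-braiding on the underlying $\cat{A}$-object of $\underleftarrow{\underline{\cat{C}}}(c',c'')\otimes_s\underleftarrow{\underline{\cat{C}}}(c,c')$ is ``the iterated half-braiding built from those on each factor''. That is the half-braiding for $\otimes_c$, not for $\otimes_s$: by \cite[Equation \ref{SThalfbraid}]{Wasserman2017} the half-braiding on a symmetric tensor product is given by the half-braiding of \emph{one} factor together with the symmetry of $\cat{A}$ on the other (and on the subobject cut out by the idempotent it does not matter which factor you choose). Concretely, here it is $s\circ(\id\otimes(\beta_{a,c''}^{-1}\circ\beta_{c'',a}^{-1})_*)$, i.e.\ only the inverse monodromy with $c''$ appears. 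With this correct description the lemma is almost immediate, which is the paper's route: the target half-braiding is post-composition with the same inverse monodromy with $c''$, the symmetry $s$ can be ignored since it is natural, and the remaining square commutes by associativity of composition in $\cat{C}$ (composition followed by post-composition with the monodromy equals post-composition followed by composition). No string-diagram manipulation of the mates is needed.

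Your proposed repair of the mismatch also does not go through. You claim the single inverse double braiding $\beta_{c'',a}^{-1}\beta_{a,c''}^{-1}$ on the codomain ``can be decomposed, using naturality of the braiding past $\bar{f}$ and $\bar{f}'$, into a product of inverse double braidings of $a$ past $c'$ and past $c''$''. Naturality converts the monodromy of $a$ with $c''$ into the monodromy of $a$ with $a'c'$, which by transparency of $\cat{A}$ equals the single monodromy with $c'$ --- an equality, not a factorisation into a product of two monodromies. Had the source carried the consecutive (iterated) half-braiding, the composition map would in fact \emph{fail} to intertwine it with the target half-braiding: one would be left with an uncancelled extra monodromy. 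That the composition only respects the one-factor half-braiding is precisely the reason it factors through $\otimes_s$ rather than $\otimes_c$, so the distinction you elide is the entire content of the lemma.
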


\begin{proof}
	We need to check that the morphism commutes with the braiding. That is, we need to show that the outside of the diagram
	$$
	\begin{tikzcd}
	a \Phi(\underleftarrow{\underline{\cat{C}}}(c',c'')\otimes_s\underleftarrow{\underline{\cat{C}}}(c,c')) \arrow[r,hook] \arrow[d,"\mathfrak{b}_a"'] 	& a \underleftarrow{\cat{C}}(c',c'')\underleftarrow{\cat{C}}(c,c') \arrow[r,"\circ"]\arrow[d,"s\circ(\id \otimes(\beta_{a,c''}^{-1}\circ\beta_{c'',a}^{-1})_*)"'] 	& a\underleftarrow{\cat{C}}(c,c'')\arrow[d,"s\circ (\beta_{a,c''}^{-1}\circ\beta_{c'',a}^{-1})_*"]\\
	\Phi(\underleftarrow{\underline{\cat{C}}}(c',c'')\otimes_s\underleftarrow{\underline{\cat{C}}}(c,c'))a  \arrow[r,hook]					& \underleftarrow{\cat{C}}(c',c'')\underleftarrow{\cat{C}}(c,c') a \arrow[r,"\circ"]																& \underleftarrow{\cat{C}}(c,c'')a
	\end{tikzcd}
	$$
	commutes. Here, the leftmost square is the definition of the half-braiding $\mathfrak{b}_a$ Equation \eqref{symtenshalfbraid} on the symmetric tensor product $\underleftarrow{\underline{\cat{C}}}(c',c'')\otimes_s\underleftarrow{\underline{\cat{C}}}(c,c')$ in terms of the half-braiding on $\underleftarrow{\underline{\cat{C}}}(c',c'')$ (Equation \eqref{halfbraidingonhomobjects}). Note that we have used monoidality of the symmetry in $\cat{A}$ to compose two instances of the symmetry here, and suppressed the isomorphisms between $a\underleftarrow{\cat{C}}(c',c'')$ and $\underleftarrow{\cat{C}}(c',ac'')$, here, as well as in the rightmost morphism. As the leftmost square commutes by definition, it suffices to check that the rightmost square commutes. For this we can ignore the symmetry $s$, as this is a natural transformation, hence commutes with the morphism $\circ$ in $\cat{A}$. The top route is then a composition followed by precomposition, while the bottom is precomposition followed by composition, so they agree by associativity of the composition in $\cat{C}$. 
\end{proof}

The associativity of this $\dcentcat{A}_s$-enriched composition is immediate from the associativity of the composition in $\underleftarrow{\cat{C}}$. To finish setting up the structure of the $\dcentcat{A}_s$-enriched category $\underleftarrow{\underline{\cat{C}}}$, we need to provide, for each $c\in\cat{C}$, an identity morphism $1_c\colon \mathbb{I}_s\rar \underleftarrow{\underline{\cat{C}}}(c,c)$, where $\mathbb{I}_s$ denotes the unit object for $\otimes_s$ as introduced in Definition \ref{symtensdef}. (Recall that the underlying object in $\cat{A}$ of $\mathbb{I}_s$ is $\oplus_{i\in\SA}i^*i$.) We then need to check that this morphism indeed specifies an identity in the sense that the following diagram commutes
\begin{equation}\label{unitmorphequation}
\begin{tikzcd}
\underleftarrow{\underline{\cat{C}}}(c,c') \otimes_s \mathbb{I}_s \arrow[r,"\rho"] \arrow[d,"\id\otimes_s1_c "']		&	\underleftarrow{\underline{\cat{C}}}(c',c)\\
\underleftarrow{\underline{\cat{C}}}(c,c')\otimes_s \underleftarrow{\underline{\cat{C}}}(c,c) \arrow[ru,"\circ"]&,
\end{tikzcd}
\end{equation}
where $\rho$ denotes the right unitor for $\otimes_s$, as well as the corresponding diagram for the left unitor.

\begin{df}\label{unitzadef}
	The \emph{$\dcentcat{A}$-identity morphism} $1_c\colon \mathbb{I}_s\rar \underleftarrow{\underline{\cat{C}}}(c,c)$ for $c\in\cat{C}$ is the mate for the morphism
	$$
		\hbox{
		\begin{tikzpicture}[baseline=(current  bounding  box.center)]
		\coordinate (i) at (-0.6,0.25);
		\node (id) at (-0.55,0){$\mathbb{I}_s$};
		\node (c) at (0,0){$c$};
		
		\coordinate (ctrl) at (0.25,1.25);
		
		\node (cp) at (0,2.2){$c$};
		
		\begin{knot}[clip width=4]
			\strand [thick, blue] (i) to [out=90,in=100] (ctrl) to [out=-80,in=90] (id);
			\strand [thick] (c) to (cp);
			\flipcrossings{2}
		\end{knot}
	\end{tikzpicture}
}:=
	\sum\limits_{i\in \SA}
	\hbox{
		\begin{tikzpicture}[baseline=(current  bounding  box.center)]
		\node (i) at (-0.75,0){$i$};
		\node (id) at (-0.5,0){$i^*$};
		\node (c) at (0,0){$c$};
		
		\coordinate (ctrl) at (0.25,1.2);
		
		\node (cp) at (0,2){$c$};
		
		\begin{knot}[clip width=4]
			\strand [thick] (i) to [out=90,in=140] (ctrl) to [out=-40,in=90] (id);
			\strand [thick] (c) to (cp);
			\flipcrossings{2}
		\end{knot}
	\end{tikzpicture}
}.
$$
\end{df}

We need to check that this indeed specifies a morphism in $\dcentcat{A}$, and that it satisfies Equation \eqref{unitmorphequation}.

\begin{lem}
	The $\dcentcat{A}$-identity morphism is a morphism in $\dcentcat{A}$, that is
	$$
	\begin{tikzcd}
	a \mathbb{I}_s \arrow[r,"1_c"]\arrow[d,"\beta_{a,\mathbb{I}_s}"]	&	a \underleftarrow{\underline{\cat{C}}}(c,c) \arrow[d,"\mathfrak{b}_{a}"]\\
	\mathbb{I}_s a \arrow[r,"1_c"]										&	 \underleftarrow{\underline{\cat{C}}}(c,c) a,
	\end{tikzcd}
	$$	
	commutes.
\end{lem}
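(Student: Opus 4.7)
The plan is to apply the forgetful functor $\Phi\colon \dcentcat{A}\rar\cat{A}$ and check the resulting square of morphisms in $\cat{A}$ commutes, which I do summand-by-summand using the decomposition $\mathbb{I}_s=\bigoplus_{i\in\SA}i^*i$. On each summand, the two composites are morphisms $a i^* i \rar \underleftarrow{\cat{C}}(c,c)\otimes a$ in $\cat{A}$, which I compare in terms of their mates in $\cat{C}$, i.e.\ morphisms $a i^* i c \rar ac$.

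For the right route, I take the mate of $1_c$ on the $i$-summand as given by Definition \ref{unitzadef}: it is the composite obtained by capping off $i$ against $i^*$ after sending the $i$-strand once around the $c$-strand. Tensoring by $\id_a$ on the left keeps the $a$-strand straight. Applying $\mathfrak{b}_a$ then, by the computation of Equation \eqref{braidingonmates} (using $a'=i^*i$ and $f_2=(1_c)_i$, so $\bar{f}_2$ is the mate of the $i$-summand of $1_c$), post-composes this with the braiding monodromy $\beta^{-1}_{a,c}\beta_{c,a}^{-1}$ of $a$ past $c$, together with a symmetry in $\cat{A}$ that relocates the $a$-strand to the right; the latter is invisible on the mate-level.

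For the left route, I use that the half-braiding of $\mathbb{I}_s$ from \cite[Definition \ref{STunitdef}]{Wasserman2017} is, on each summand $i^*i$, the symmetry $s_{a,i^*i}$ in $\cat{A}$, since $\mathbb{I}_s$ is built from objects of the symmetric subcategory and inherits the symmetric braiding. Composing with $(1_c)_i$ after the swap produces a mate which, in string diagrams, consists of the $i$-loop around $c$ from Definition \ref{unitzadef} preceded by a symmetry swap $s_{a,i^*i}$ acting on the $a$- and $i^*i$-strands, with no nontrivial braiding of $a$ against $c$.

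The key step is to verify that these two mates in $\cat{C}(ai^*ic,ac)$ coincide. My approach is to use naturality of the braiding in $\cat{C}$ to slide the $a$-strand of the right-route diagram through the $i$-loop encoded in $(1_c)_i$. Because $a$, $i$, $i^*$ all lie in the symmetric fusion subcategory $\cat{A}\subset\cat{C}$, the braiding of $a$ past $i$ and past $i^*$ is the symmetry $s$, so pushing $a$ across the cap produces only a symmetry $s_{a,i^*i}$; however, because the $i$-loop encircles the $c$-strand, sliding $a$ fully through it additionally incurs the double braiding $\beta_{a,c}^{-1}\beta_{c,a}^{-1}$. Thus the $a$-monodromy picked up from $\mathfrak{b}_a$ on the right-hand side is exactly the obstruction to moving $a$ across the loop on the left-hand side, and the two sides agree. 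The main obstacle will be keeping track of the handedness of crossings so that the monodromy from $\mathfrak{b}_a$ matches the one created by traversing the $i$-loop; this is really a consistency check between the sign conventions in Definitions \ref{halfbraidequip} and \ref{unitzadef}, and once those are aligned the equality follows by naturality of the braiding and transparency of objects of $\cat{A}$ to each other.
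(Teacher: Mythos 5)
Your overall strategy (pass to the underlying morphisms in $\cat{A}$ via $\Phi$, decompose $\mathbb{I}_s=\bigoplus_{i\in\SA}i^*i$, and compare mates in $\cat{C}$) matches the paper's, and your account of the right-hand route is correct. The gap is in your treatment of the left-hand route: the half-braiding $\beta_{a,\mathbb{I}_s}$ is \emph{not} the symmetry $s_{a,i^*i}$ on each summand. An object of $\dcentcat{A}$ whose underlying object lies in $\cat{A}$ need not carry the symmetric half-braiding, and $\mathbb{I}_s$ is the prime example: by \cite[Equation \eqref{STunithalfbraid}]{Wasserman2017} its half-braiding past $a$ is the ``cloaking'' one, defined by summing over a resolution of the identity on $a\otimes i$ by basis morphisms $\phi$ and their duals. (Were the half-braiding symmetric, $\mathbb{I}_s$ would just be the object $\bigoplus_i i^*i$ of $\cat{A}\subset\dcentcat{A}$, which is not the unit for $\otimes_s$; for $\cat{A}=\Rep(G)$ the unit is the functions on $G$ with conjugation action, whose half-braiding is twisted by the group action.) The paper's proof consists precisely of unpacking this nontrivial half-braiding into its $\phi$-summands and checking, using transparency of $\cat{A}$ to itself, that the sum reassembles into the $a$--$c$ monodromy appearing in the top route.

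Your attempted reconciliation in the final paragraph cannot repair this. As you have set things up, the bottom route is a morphism in which the $a$-strand has zero linking with the $c$-strand, while the top route contains the monodromy $\beta^{-1}_{a,c}\beta^{-1}_{c,a}$. Naturality of the braiding and isotopy preserve this linking: sliding the $a$-strand past the $i$--$i^*$ cap lets you relocate a monodromy loop, but it cannot create one out of a diagram that has none. So if $\beta_{a,\mathbb{I}_s}$ really were the symmetry, the square would simply fail to commute whenever $a$ is not transparent to $c$ --- equivalently, $1_c$ is \emph{not} a morphism in $\dcentcat{A}$ out of $(\bigoplus_i i^*i,s)$. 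The monodromy on the top route has to be produced on the bottom route by $\beta_{a,\mathbb{I}_s}$ itself, and verifying that the cloaking half-braiding does exactly this is the actual content of the lemma.
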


\begin{proof}
	Recalling that the braiding $\mathfrak{b}_{a}$ was computed in terms of mates in Equation \eqref{braidingonmates}, the top and bottom routes compute as
	$$
	\hbox{
		\begin{tikzpicture}[baseline=(current  bounding  box.center)]
		\coordinate (i) at (-0.6,-1.75);
		\node (id) at (-0.5,-2){$\mathbb{I}_s$};
		\node (c) at (0,-2){$c$};
		
		\coordinate (ctrl) at (0.25,-0.5);
		\coordinate (tra) at (0,-1.3);
		\node (cp) at (0,2){$c$};
		
		\node (a) at (-1.25,-2){$a$};
		\coordinate (ac1) at (0.25,1);
		\coordinate (ac2) at (-0.75,-1);
		\coordinate (ac3) at (0.5, -1.2);
		\coordinate (ao) at (0.5,2);
		
		\begin{knot}[clip width=4, clip radius = 4pt]
			\strand [thick,blue] (i) to [out=90,in=80] (ctrl) to [out=-100,in=90] (id);
			\strand [thick] (c) to (cp);
			\strand [thick] (a) to [out=90,in=90] (ac1) to [out=-90,in=90] (ac2) to [out=-90,in=-90] (ac3) to [out=90,in=-90] (ao);
			\flipcrossings{1,4}
		\end{knot}
	\end{tikzpicture}
}
\quad\tn{and }
\hbox{
	\begin{tikzpicture}[baseline=(current  bounding  box.center)]
	\coordinate (i) at (-0.6,-1.75);
	\node (id) at (-0.5,-2){$\mathbb{I}_s$};
	\node (c) at (0,-2){$c$};
	
	\coordinate (ctrl) at (0.25,1);

	\coordinate (tra) at (0,-0.75);
	\node (cp) at (0,2){$c$};
	
	\node (a) at (-1.25,-2){$a$};
	\coordinate (ac1) at (0.25,-0.5);
	\coordinate (ac2) at (-0.25,-1);
	\coordinate (ac3) at (0.5, -0.5);
	\coordinate (ao) at (0.5,2);
	
	\begin{knot}[clip width=4]
	\strand [thick,blue] (i) to [out=90,in=80] (ctrl) to [out=-100,in=90] (id);
	\strand [thick] (c) to (tra);
	\strand [thick] (tra) to (cp);
	\strand [thick] (a) to [out=90,in=-90] (ac3) to [out=90,in=-90] (ao);
	\flipcrossings{1}
	\end{knot}
	\end{tikzpicture}
},
$$
respectively. The latter has summands (using the definition of the half-braiding on $\mathbb{I}_s$ from Equation \eqref{unithalfbraid})
$$
\hbox{
	\begin{tikzpicture}[baseline=(current  bounding  box.center)]
	\node (i) at (-0.7,-2){$i$};
	\node (id) at (-0.4,-2){$i^*$};
	\node (c) at (0,-2){$c$};
	
	\coordinate (ctrl) at (0.25,1);
	
	\node (cp) at (0,2){$c$};
	
	\node (a) at (-1,-2){$a$};
	\coordinate (ac1) at (0.25,-0.5);
	\coordinate (ac2) at (-0.25,-1);
	\coordinate (ac3) at (0.5, -1.2);
	\coordinate (ao) at (0.5,2);
	
	\coordinate (tra) at (0,-0.75);
	
	\node (phi) at (-0.9,-1)[draw]{$\phi$};
	\node (phid) at (-0.4,-0.4)[draw]{$\phi^*$};
	
	\begin{knot}[clip width=4]
	\strand [thick] (phi) to [out=90,in=80] (ctrl) to [out=-100,in=90] (phid);
	\strand [thick] (c) to (tra);
	\strand [thick] (tra) to (cp);
	\strand [thick] (a) to [out=90,in=-110] (phi);
	\strand [thick] (i) to [out=90,in=-70] (phi);
	\strand [thick] (id) to [out=90,in=-110] (phid);
	\strand [thick] (phid) to [out=-70,in=-90] (ac3) to [out=90,in=-90]  (ao);
	\flipcrossings{1,3,4,5,7}
	\end{knot}
	\end{tikzpicture}
}
=
\hbox{
	\begin{tikzpicture}[baseline=(current  bounding  box.center)]
	\node (i) at (-0.7,-2){$i$};
	\node (id) at (-0.4,-2){$i^*$};
	\node (c) at (0,-2){$c$};
	
	\coordinate (ctrl) at (0.25,1);
	\coordinate (ctrl2) at (0.4,1.1);
	
	\node (cp) at (0,2){$c$};
	
	\node (a) at (-1,-2){$a$};
	\coordinate (ac1) at (-0.4,-0.25);
	\coordinate (ac2) at (-0.25,-0.75);
	\coordinate (ac3) at (0.5, -1.2);
	\coordinate (ao) at (0.5,2);
	
	\coordinate (tra) at (0,-0.85);
	
	\node (phi) at (-0.9,-1)[draw]{$\phi$};
	\node (phid) at (-0.9,-0.2)[draw]{$\phi^t$};
	
	\begin{knot}[clip width=4,clip radius=3pt]
	\strand [thick] (phi) to [out=90,in=-90] (phid);
	\strand [thick] (phid) to [out=70,in=80] (ctrl) to [out=-100,in=90] (ac1) to [out=-90,in=90] (id);
	\strand [thick] (c) to (tra);
	\strand [thick] (tra) to (cp);
	\strand [thick] (a) to [out=90,in=-110] (phi);
	\strand [thick] (i) to [out=90,in=-70] (phi);
	\strand [thick] (phid) to [out=110,in=80](ctrl2) to [out=-100,in=90] (ac2) to [out=-90,in=-90] (ac3) to [out=90,in=-90]  (ao);
	\flipcrossings{1,4,5,7}
	\end{knot}
	\end{tikzpicture}
},
$$
where the $\phi$ give a resolution of the identity on $ai$. The terms specified by this last diagram sum to the top route, remembering that the objects in $\cat{A}$ are transparent to each other.
\end{proof}

\begin{lem}
	The identity morphisms satisfy the triangle equality from Equation \eqref{unitmorphequation}.
\end{lem}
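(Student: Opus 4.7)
My plan is to verify the triangle by testing on generic morphisms and reducing to a diagrammatic identity in $\cat{C}$. By the adjunction defining $\underleftarrow{\cat{C}}(c,c')$ in Equation \eqref{enrichedhomdef}, it suffices to precompose both routes with an arbitrary morphism $a \rar \underleftarrow{\cat{C}}(c,c')$, i.e. with the mate $\bar{f} : ac \rar c'$ of some $f : c \rar_a c'$, and compare the resulting morphisms of the form $a \otimes \mathbb{I}_s \otimes c \rar c'$ in $\cat{C}$.

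The bottom route unwinds as follows. Applying $\id \otimes_s 1_c$ plants, on each simple summand $i^* i$ of $\mathbb{I}_s$, the coevaluation-like morphism of Definition \ref{unitzadef}, whose mate pairs $i$ and $i^*$ across $c$. Post-composing with the $\dcentcat{A}_s$-composition of Definition \ref{compzadef} amounts, after the $\otimes_s$-inclusion and by Equation \eqref{matescomp}, to ordinary composition in $\underleftarrow{\cat{C}}$. Using the transparency of $\cat{A}$ to itself together with naturality of the braiding in $\cat{C}$, the $i, i^*$ strands untangle from the $a$ strand and collapse to a single evaluation $i^* i \rar \mathbb{I}_\cat{A}$ paired against $\bar{f}$, so that the sum over $i \in \SA$ reproduces the insertion of an identity on $\underleftarrow{\cat{C}}(c,c')$ tensored against $\mathbb{I}_s \rar \mathbb{I}_\cat{A}$.

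The top route is the right unitor $\rho$ for $\otimes_s$ applied to $\underleftarrow{\underline{\cat{C}}}(c,c')$, followed by $\bar f$. From the construction of $\rho$ in \cite{Wasserman2017}, its action on each summand $i^* i \subset \mathbb{I}_s$ is by the same evaluation, paired against the half-braiding of $\underleftarrow{\underline{\cat{C}}}(c,c')$. Matching the two routes then reduces to showing that the half-braiding $\mathfrak{b}$ from Equation \eqref{halfbraidingonhomobjects} contributes trivially against the $i^* i$ summand. The main obstacle is this bookkeeping step: one must check that the double braiding $\beta^{-1}_{i^* i, c'} \beta^{-1}_{c', i^* i}$ inserted by $\mathfrak{b}$ cancels against the braidings implicit in projecting $\underleftarrow{\underline{\cat{C}}}(c,c') \otimes \mathbb{I}_s$ onto $\underleftarrow{\underline{\cat{C}}}(c,c') \otimes_s \mathbb{I}_s$. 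This uses transparency of $\cat{A}$ to itself and the explicit form of the half-braiding on $\mathbb{I}_s$ recalled in the previous lemma. The corresponding triangle for the left unitor is handled by the symmetric calculation.
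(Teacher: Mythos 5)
Your proposal is correct and follows essentially the same route as the paper: both verify the triangle by precomposing with a generic morphism into $\underleftarrow{\underline{\cat{C}}}(c,c')$, unwinding the two routes in terms of mates and string diagrams, and observing that the double braidings contributed by the half-braiding $\mathfrak{b}$ cancel against those built into the $\otimes_s$-unitor, using transparency of $\cat{A}$ to itself. The paper phrases this cancellation as simplifying ``a double symmetry between $z$ and the summand of the strand'' and works with test objects $z\in\dcentcat{A}$ rather than $a\in\cat{A}$, but this is an inessential difference.
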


\begin{proof}
	The unitor for $\dcentcat{A}_s$ is given in Equation \eqref{rightunitor}. Let $z\in\dcentcat{A}$ and let $f\colon z\rar \underleftarrow{\underline{\cat{C}}}(c,c')$ be a morphism. The mate for the image of $f$ under $\rho$ is:
	$$
	\hbox{
		\begin{tikzpicture}[baseline=(current  bounding  box.center)]
		
		\node (cf) at (0,-0.5){$z\otimes_s\mathbb{I}_s$};
		
		\node (inc) at (0.7,-0.5){$c$};
		
		\node (tr) at (0,0.1) {$\bigtriangledown$};
		\coordinate (trd) at (0,0);
		\coordinate (truw) at (-0.1,00.2);
		\coordinate (true) at (0.1,0.2);
		\coordinate (truee) at (0.05,0.2);
		
		\coordinate (rc) at (0.9,1.2);
		
		\node (c1) at (0.7,2.1)[draw]{$\bar{f}$};
		
		\coordinate (outg) at (0.7,2.7);
		
		\begin{knot}[clip width=4,clip radius=3pt]
			\strand [thick] (cf) to (trd);
			\strand [thick] (truw) to [out=110,in=-110] (c1);
			\strand [thick] (inc)  to [out=90, in=-90] (c1);
			\strand [thick] (c1) to [out=90,in=-90] (outg);
			\strand [thick,blue] (true) to [out=70, in=-90] (rc) to [out=90,in=70] (truee);
			\flipcrossings{1}
		\end{knot}
	\end{tikzpicture}
} ,
$$
where we simplified a double symmetry between $z$ and the summand of the strand, coming from the definition of the braiding on $\underleftarrow{\underline{\cat{C}}}(c,c')$.
On the other hand, the bottom route is the composite of $f$ with $1_c$, which in terms of mates is represented by the same diagram. This shows that the identity morphism indeed satisfies Equation \eqref{unitmorphequation}.
\end{proof}

We have now gathered the ingredients to define:

\begin{df}
	Let $\cat{C}$ be a braided tensor category and let $\cat{A}$ be a symmetric subcategory of $\cat{C}$. Then the \emph{left associated $\dcentcat{A}_s$-enriched category $\underleftarrow{\underline{\cat{C}}}$ for $\cat{C}$} is the $\dcentcat{A}_s$-enriched category with objects those of $\cat{C}$, hom-objects from Definition \ref{halfbraidequip}, composition from Definition \ref{compzadef} and identity morphisms from Definition \ref{unitzadef}.
\end{df}

\subsubsection{$\dcentcat{A}_s$-Tensoring}

The category $\underleftarrow{\underline{\cat{C}}}$ produced above is also tensored over $\dcentcat{A}_s$:

\begin{prop}\label{zatensoring}
	Let $\cat{C}$ be a braided tensor category containing $\cat{A}$. Then for all $c,c'\in \cat{C}$ and $(a,\beta)\in \dcentcat{A}$, the subobject $\pi((a,\beta),c)$ associated to the idempotent
	$$
	\Pi_{(a,\beta),c}
	=
	\hbox{
		\begin{tikzpicture}[baseline=(current  bounding  box.center)]
			\coordinate (west) at (-0.5,-0.3);
			\coordinate (north) at (0,0);
			\coordinate (east) at (0.5,-0.3);
			\coordinate (south) at (0,-0.6);
			\node (a) at (-0.25,-1.5) {$a$};
			\node (b) at (0.25,-1.5) {$c$};
			\coordinate (ao) at (-0.25,1);
			\coordinate (bo) at (0.25,1);

			\begin{knot}[clip width=4]
				\strand [blue, thick] (west)
				to [out=90,in=-180] (north)
				to [out=0,in=90] (east)
				to [out=-90,in=0] (south)
				to [out=-180,in=-90] (west);
				\strand [thick] (a) to [out=90,in=-90] (ao);
				\strand [thick] (b) to (bo);
				\flipcrossings{2,3}
			\end{knot}
		\end{tikzpicture}
	}
	:=\sum_{i\in\cat{O}(\cat{A})}\frac{1}{d_i}
	\hbox{
		\begin{tikzpicture}[baseline=(current  bounding  box.center)]
		
		\coordinate (east) at (1,0);
		
		\node (sym) at (-0.5,0.8)[draw,minimum height=15pt]{$s_{i,a}$};
		\node (br) at (-0.5,-0.6)[draw,minimum height=15pt]{$\beta_{i}$};
		
		\node (a) at (-0.5,-1.8) {$a$};
		\node (b) at (0.5,-1.8) {$c$};
		\coordinate (ao) at (-0.5,1.8);
		\coordinate (bo) at (0.5,1.8);
		
		\node (il) at (1.2,0){$i$};
		
		\begin{knot}[clip width=4]
			\strand [thick] (br.120) to [out=90,in=-90] (sym.-120);
			\strand [thick] (sym.60) to [out=90,in=90] (east) to [out=-90,in=-90] (br.-60);
			\strand [thick] (a) to (br);
			\strand [thick] (sym) to (br);
			\strand [thick] (sym) to (ao);
			\strand [thick] (b) to (bo);
			\flipcrossings{1}
		\end{knot}
	\end{tikzpicture}
}
$$
satisfies
$$
\dcentcat{A}((a,\beta),\underleftarrow{\underline{\cat{C}}}(c,c'))\cong \cat{C}(\pi((a,\beta),c),c'). 
$$
\end{prop}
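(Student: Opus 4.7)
The plan is to exploit the defining adjunction $\cat{A}(a,\underleftarrow{\cat{C}}(c,c'))\cong \cat{C}(ac,c')$, which identifies any $\phi\colon a\to \underleftarrow{\cat{C}}(c,c')$ with its mate $\bar{\phi}\colon ac\to c'$. A morphism in $\dcentcat{A}((a,\beta),\underleftarrow{\underline{\cat{C}}}(c,c'))$ is exactly such a $\phi$ that intertwines $\beta$ with the half-braiding $\mathfrak{b}$ of Definition \ref{halfbraidequip}. Unpacking this intertwining condition on mates, using the explicit formula for $\mathfrak{b}$, yields for each simple $i\in\SA$ an equation of morphisms in $\cat{C}$ relating $\bar{\phi}$ precomposed with $\beta_i$ and the symmetry $s_{i,a}$, to $\bar{\phi}$ precomposed with the braiding monodromy between $i$ and $c$ — in precisely the configuration which appears inside a single summand of $\Pi_{(a,\beta),c}$.

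I would then establish that this equivariance condition on $\bar{\phi}$ is equivalent to the fixed-point equation $\bar{\phi}\circ \Pi_{(a,\beta),c}=\bar{\phi}$. For ``$\Rightarrow$'', substitute the equivariance identity into each summand of $\bar{\phi}\circ \Pi_{(a,\beta),c}$; each summand collapses to $d_i^{-1}\bar{\phi}\circ (\tn{ev}_i\circ \tn{coev}_i\otimes \id_{ac})$, and the standard completeness relation $\sum_{i\in\SA} d_i^{-1}\tn{ev}_i\circ \tn{coev}_i=\id_{\mathbb{I}}$ in the spherical fusion category $\cat{A}$ turns the total into $\bar{\phi}$. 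For ``$\Leftarrow$'', one first verifies that $\Pi_{(a,\beta),c}$ is genuinely idempotent — here the half-braiding axiom $\beta_{ij}=(\beta_i\otimes \id_j)(\id_i\otimes \beta_j)$ fuses two stacked blocks across a pair of simples $i,j$, after which the completeness relation again reduces the double sum — and observes that $\bar{\phi}\circ \Pi_{(a,\beta),c}$ is automatically equivariant by construction, so any $\bar{\phi}$ fixed by $\Pi$ must itself be equivariant.

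With this bijection between equivariant $\bar{\phi}\colon ac\to c'$ and morphisms $\pi((a,\beta),c)\to c'$ in hand, the desired natural isomorphism follows immediately; naturality in $c'$ is transparent since postcomposition commutes with the $\cat{A}$-mate correspondence. The main obstacle is the string-diagrammatic bookkeeping in the middle step: one needs to confirm that the symmetries $s$ in $\cat{A}$ and the double braidings $\beta_{c,i}\beta_{i,c}$ in $\cat{C}$ appearing in $\Pi_{(a,\beta),c}$ — which, heuristically, are exactly there to cancel the naturality failure in Equation \eqref{failuremonodromy} — line up exactly with those arising from $\mathfrak{b}$, so that the equivariance identity and the fixed-point identity truly coincide. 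Sphericity of $\cat{A}$ is used to freely manipulate the caps and cups attached to the $i$-strands in $\Pi$.
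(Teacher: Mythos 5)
Your overall architecture agrees with the paper's: both sides of the claimed isomorphism are to be identified with the subspace of $\cat{C}(ac,c')$ cut out by a single averaging operator, namely precomposition with $\Pi_{(a,\beta),c}$ on one side and the condition of intertwining the half-braidings on the other. The paper gets there by importing two facts from the companion paper on $\otimes_s$ --- that $\Pi_{(a,\beta),c}$ is an idempotent, and that hom-spaces in $\dcentcat{A}$ are themselves equalisers of an analogous averaged encircling operator (a consequence of cloaking) --- and then performing one diagrammatic check: that this encircling operator, specialised to $a'=\underleftarrow{\underline{\cat{C}}}(c,c')$ and unpacked on mates via Equation \eqref{braidingonmates}, coincides with precomposition with $\Pi_{(a,\beta),c}$ as in Equation \eqref{idemprecomp}.

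The genuine gap is in your linchpin identity. You claim that for equivariant $\bar{\phi}$ each summand of $\bar{\phi}\circ\Pi_{(a,\beta),c}$ collapses to $d_i^{-1}\,\bar{\phi}\circ(\tn{ev}_i\circ\tn{coev}_i\otimes\id_{ac})$ and that $\sum_{i\in\SA}d_i^{-1}\,\tn{ev}_i\circ\tn{coev}_i=\id_{\mathbb{I}}$. The latter is false: $\tn{ev}_i\circ\tn{coev}_i$ is the scalar $d_i$, so each summand of that sum is already $\id_{\mathbb{I}}$ and the total is $|\SA|\cdot\id_{\mathbb{I}}$. As written, your forward direction yields $\bar{\phi}\circ\Pi_{(a,\beta),c}=|\SA|\,\bar{\phi}$, which is incompatible with $\Pi_{(a,\beta),c}$ being an idempotent; the same faulty relation props up your idempotency argument and your ``$\Leftarrow$'' direction. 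The equivalence between being equivariant and being fixed by the average is precisely the content of the cloaking lemma cited in the paper, and its proof there does not proceed by a naive resolution of the identity, so it cannot be re-derived this way. Moreover, even with a corrected normalisation, the claimed collapse of each summand to a free circle is not automatic: the half-braiding $\mathfrak{b}$ of Definition \ref{halfbraidequip} is defined through the monodromy $(\beta^{-1}_{a,c'}\beta^{-1}_{c',a})_*$ with the \emph{target} $c'$ together with the symmetry $s$, whereas $\Pi_{(a,\beta),c}$ encircles the \emph{source} $c$; translating one into the other on mates (the right-hand side of Equation \eqref{braidingonmates}) is the substantive diagrammatic content of the paper's proof, which you flag as ``bookkeeping'' but do not carry out.
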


In the diagram on the left for $\Pi_{(a,\beta),c}$, care should be taken to remember that the braidings on the $a$ strand take place in $\dcentcat{A}$, viewing $i$ as an object in $\dcentcat{A}$ equipped with the half-braiding coming from the symmetry.

\begin{proof}
	By formally the same proof as for \cite[Lemma 11]{Wasserman2017} the morphism $\Pi_{(a,\beta),c}$ is indeed an idempotent on $(a,\beta)c$. 
	
	To see the isomorphism, we notice that the hom-space $\cat{C}(\pi((a,\beta),c),c')$ can be viewed as the equaliser (in $\Vect$) for precomposition with $\Pi_{(a,\beta),c}$ and the identity on $\cat{C}(ac,c')$. Note that precomposition with $\Pi_{(a,\beta),c}$ takes a morphism $\bar{f}\colon ac\rar c'$ to the morphism
\begin{equation}\label{idemprecomp}
	\hbox{
		\begin{tikzpicture}[baseline=(current  bounding  box.center)]
		\coordinate (west) at (-0.5,-0.3);
		\coordinate (north) at (0,0);
		\coordinate (east) at (0.5,-0.3);
		\coordinate (south) at (0,-0.6);
		\node (a) at (-0.25,-1.5) {$a$};
		\node (b) at (0.25,-1.5) {$c$};
		\coordinate (ao) at (-0.25,0.5);
		\coordinate (bo) at (0.25,0.5);

		\node (f) at (0,1)[draw]{$\bar{f}$};
		
		\coordinate (out) at (0,2);
		
		\begin{knot}[clip width=4]
			\strand [blue, thick] (west)
			to [out=90,in=-180] (north)
			to [out=0,in=90] (east)
			to [out=-90,in=0] (south)
			to [out=-180,in=-90] (west);
			\strand [thick] (a) to [out=90,in=-90] (ao) to [out=90,in=-120] (f);
			\strand [thick] (b) to (bo) to [out=90,in=-60] (f);
			\strand [thick] (f) to (out);
			\flipcrossings{2,3}
		\end{knot}
	\end{tikzpicture}
}.
\end{equation}
On the other hand in $\dcentcat{A}$ the hom-spaces between $(a,\beta)$ and $(a',\beta')$ are the equalisers for the morphism $\cat{A}(a,a')\rar \cat{A}(a,a')$ given by

\begin{equation*}\label{braidingcomp}
	\hbox{
	\begin{tikzpicture}[baseline=(current  bounding  box.center)]
	\coordinate (a) at (0,-1.8);
	\node (f) at (0,-.25)[draw]{$f$};
	\coordinate (ao) at (0,1.2);

	\begin{knot}[clip width=4]
	\strand [thick] (a) to [out=90,in=-90] (f);
	\strand [thick]	(f) to [out=90,in=-90] (ao);
	\end{knot}
	\end{tikzpicture}
}
\mapsto
	\hbox{
		\begin{tikzpicture}[baseline=(current  bounding  box.center)]
		\coordinate (west) at (-0.5,-0.3);
		\coordinate (north) at (0,0.6);
		\coordinate (east) at (0.5,-0.3);
		\coordinate (south) at (0,-1.1);
		\coordinate (a) at (0,-1.8);
		\node (f) at (0,-.25)[draw]{$f$};
		\coordinate (ao) at (0,1.2);

		\begin{knot}[clip width=4]
		\strand [blue, thick] (west)
		to [out=90,in=-180] (north)
		to [out=0,in=90] (east)
		to [out=-90,in=0] (south)
		to [out=-180,in=-90] (west);
		\strand [thick] (a) to [out=90,in=-90] (f);
		\strand [thick]	(f) to [out=90,in=-90] (ao);
		\flipcrossings{1,2}
		\end{knot}
		\end{tikzpicture}
	}
	:=\sum_{i\in\cat{O}(\cat{A})}\frac{1}{d_i}
		\hbox{
		\begin{tikzpicture}[baseline=(current  bounding  box.center)]
		\coordinate (west) at (-0.5,-0.3);
		\coordinate (north) at (0,0.6);
		\coordinate (east) at (0.5,-0.3);
		\coordinate (south) at (0,-1.1);
		\coordinate (a) at (0,-1.8);
		\node (f) at (0,-.25)[draw]{$f$};
		\coordinate (ao) at (0,1.2);
		
		\node (i) at (-0.7,-0.3){$i$};

		\begin{knot}[clip width=4]
		\strand [thick] (west)
		to [out=90,in=-180] (north)
		to [out=0,in=90] (east)
		to [out=-90,in=0] (south)
		to [out=-180,in=-90] (west);
		\strand [thick] (a) to [out=90,in=-90] (f);
		\strand [thick]	(f) to [out=90,in=-90] (ao);
		\flipcrossings{1,2}
		\end{knot}
		\end{tikzpicture}
	}
	\end{equation*}
	and the identity on $\cat{A}(a,a')$. This description of the hom-spaces in $\dcentcat{A}$ is an easy consequence of cloaking (\cite[Lemma 10]{Wasserman2017}). Setting $a'=\underleftarrow{\underline{\cat{C}}}(c,c')$ and unpacking the definition of the braiding on $\underleftarrow{\underline{\cat{C}}}(c,c')$ in terms of mates, using the right hand side of Equation \eqref{braidingonmates}, we see that this agrees with Equation \eqref{idemprecomp}.
	This shows that the objects in the proposition are equalisers for the same morphisms, and therefore canonically isomorphic.
\end{proof}

Observe that this $\dcentcat{A}_s$-tensoring also provides an action of $\dcentcat{A}_s$ on the original category $\cat{C}$. This implies that we could first have provided the $\dcentcat{A}_s$-action, and enriched along this in a fashion similar to Definition \ref{enrich}. For the case $\cat{C}=(\dcentcat{A},\otimes_c)$, this $\dcentcat{A}_s$-action agrees with $\otimes_s$.

\begin{lem}\label{unitobjidentity}
	The functor $\mathbb{I}_s\cdot-:\cat{C}\rar \cat{C}$ induced by the $\dcentcat{A}$-tensoring is naturally isomorphic to the identity on $\cat{C}$ along the natural tranformation $\tau$ with components
	$$
	\tau_c=
	\hbox{
		\begin{tikzpicture}[baseline=(current  bounding  box.center)]
		\node (inc) at (0,0.5){$\pi(\mathbb{I}_s, c)$};
		\node (bo) at (0,3){$c$};
		\node (tr) at (0,1.1) {$\bigtriangledown$};
		\coordinate (trd) at (0,1);
		\coordinate (truw) at (-0.15,1.2);
		\coordinate (true) at (0.1,1.2);
		\coordinate (trum) at (-0.1,1.2);
		
		\coordinate (ctrl) at (0.175,2.3);
		
		\begin{knot}[clip width=4,clip radius=3pt]
			\strand [thick] (inc) to (trd);
			\strand [thick,blue] (trum) to [out=120, in=0] (ctrl) to [out=180,in=120] (truw);
			\strand [thick] (true) to [out=60,in =-90] (bo);
			\flipcrossings{2}
		\end{knot}
	\end{tikzpicture}
},
	$$
	where the triangle denotes the inclusion $\pi(\mathbb{I}_s,c)\hookrightarrow \mathbb{I}_sc$.
\end{lem}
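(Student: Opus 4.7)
The plan is to construct an explicit inverse $\sigma_c \colon c \rar \pi(\mathbb{I}_s, c)$ using the adjunction from Proposition \ref{zatensoring}, and then verify the two composites give identities by direct string-diagram manipulation.

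First, I invoke the defining isomorphism
$$
\cat{C}(\pi(\mathbb{I}_s, c), c) \cong \dcentcat{A}(\mathbb{I}_s, \underleftarrow{\underline{\cat{C}}}(c,c))
$$
from Proposition \ref{zatensoring}. Since $\underleftarrow{\underline{\cat{C}}}(c,c)$ is a unital $\dcentcat{A}_s$-enriched hom-object, it comes equipped with the $\dcentcat{A}$-identity morphism $1_c \colon \mathbb{I}_s \rar \underleftarrow{\underline{\cat{C}}}(c,c)$ from Definition \ref{unitzadef}. I take $\sigma_c \colon c \rar \pi(\mathbb{I}_s, c)$ to be the morphism in $\cat{C}$ corresponding to $1_c$ under the above bijection. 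Unpacking the definition of $1_c$ (which is a sum over $i \in \SA$ of coevaluations $\mathbb{I} \rar i^* i$ tensored with $\id_c$), $\sigma_c$ is, up to the inclusion into $\mathbb{I}_s \cdot c$, given by the sum of coevaluations paired with the identity on $c$, with the image then factored through the idempotent $\Pi_{\mathbb{I}_s, c}$.

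Next, I check $\tau_c \circ \sigma_c = \id_c$. Drawing this composite in string diagrams, the coevaluation from $\sigma_c$ creates an $i^* i$ pair, the idempotent $\Pi_{\mathbb{I}_s, c}$ is sandwiched in the middle (but is absorbed since $\sigma_c$ already factors through the image of this idempotent), and $\tau_c$ then contracts $i^* i$ via the trivalent vertex coming from the unit structure on $\mathbb{I}_s$. Using the definition of the symmetric unit $\mathbb{I}_s$ and the fact that the compositions $i \rar i^* i \rar i$ (coevaluation followed by evaluation through the unit structure of $\mathbb{I}_s$) give $d_i \cdot \id_i$, the sum over $i \in \SA$ with the appropriate normalisation collapses to $\id_c$. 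For the other composite $\sigma_c \circ \tau_c = \id_{\pi(\mathbb{I}_s,c)}$, I use the characterisation of $\pi(\mathbb{I}_s, c)$ as an image of an idempotent: it suffices to show that the two maps, pre- and post-composed with the inclusion $\pi(\mathbb{I}_s,c) \hookrightarrow \mathbb{I}_s\cdot c$, agree as endomorphisms of $\mathbb{I}_s\cdot c$ after multiplication with $\Pi_{\mathbb{I}_s,c}$; this reduces to the idempotent equation $\Pi_{\mathbb{I}_s, c}^2 = \Pi_{\mathbb{I}_s,c}$.

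Finally, naturality of $\tau$ follows formally from the construction: for $f \colon c \rar c'$, the induced morphism $\pi(\mathbb{I}_s, c) \rar \pi(\mathbb{I}_s, c')$ is the one corresponding under the Proposition \ref{zatensoring} adjunction to $1_{c'} \circ f_* = f_* \circ 1_c$, where $f_*$ denotes postcomposition. The commutativity of the resulting square reduces to the fact that the trivalent vertex in the formula for $\tau$ commutes with the $c$-strand on which $f$ is applied, which is immediate since $f$ is parallel to the $\mathbb{I}_s$-strands being contracted.

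The main obstacle will be the first composite $\tau_c \circ \sigma_c = \id_c$: one must carefully unravel the definition of $\mathbb{I}_s$ as $\oplus_i i^*i$ equipped with its half-braiding, then verify that the contraction arising from the $\bigtriangledown$ vertex in the formula for $\tau_c$ precisely inverts the coevaluations appearing in $1_c$, with the $\frac{1}{d_i}$ factors from the idempotent $\Pi_{\mathbb{I}_s,c}$ exactly cancelling the $d_i$ that appears from the circle evaluation on each $i$. The bookkeeping of these normalisation factors, together with correctly identifying the symmetries in the diagrams as coming from the $\dcentcat{A}$-half-braiding versus the ambient braiding in $\cat{C}$, is the delicate part.
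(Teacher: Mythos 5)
Your overall strategy --- exhibit an explicit inverse $\sigma_c$ built from the coevaluations appearing in the identity morphism $1_c$ of Definition \ref{unitzadef}, and check both composites by string-diagram computation --- is the right one; the paper itself only says the proof is analogous to the unitor computation for $\otimes_s$ in the earlier paper, so you are filling in exactly the computation being outsourced. Two corrections are needed, one small and one substantive. The small one: the bijection of Proposition \ref{zatensoring} sends $1_c\in\dcentcat{A}(\mathbb{I}_s,\underleftarrow{\underline{\cat{C}}}(c,c))$ to an element of $\cat{C}(\pi(\mathbb{I}_s,c),c)$, i.e.\ to a morphism \emph{out of} $\pi(\mathbb{I}_s,c)$ --- that morphism is $\tau_c$ itself, not $\sigma_c$. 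Your candidate inverse has to be defined directly, as you in fact then do: $\sigma_c$ is the sum of coevaluations $c\rar i^*ic$ followed by the projection onto the image of $\Pi_{\mathbb{I}_s,c}$.

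The substantive gap is the composite $\sigma_c\circ\tau_c=\id_{\pi(\mathbb{I}_s,c)}$. After conjugating by the inclusion and projection, what has to be shown is $\Pi_{\mathbb{I}_s,c}\circ(\tn{coev}\otimes\id_c)\circ(\tn{ev}\otimes\id_c)\circ\Pi_{\mathbb{I}_s,c}=\Pi_{\mathbb{I}_s,c}$, and this does \emph{not} reduce to $\Pi_{\mathbb{I}_s,c}^2=\Pi_{\mathbb{I}_s,c}$: the middle factor $\tn{coev}\circ\tn{ev}$ is a genuinely different endomorphism of $\mathbb{I}_s\otimes c$ from the identity (it kills everything outside the ``diagonal'' copy of $c$), so you must actually compute that sandwiching it between the idempotents reproduces the idempotent. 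This is a real string-diagram identity using the explicit form of $\Pi_{\mathbb{I}_s,c}$ --- the cloaking circles, their $\frac{1}{d_i}$ normalisations, and the half-braiding on $\mathbb{I}_s$ --- and it is precisely the content of the cited unitor lemma. Knowing only $\tau_c\circ\sigma_c=\id_c$ makes $\sigma_c\circ\tau_c$ an idempotent with image isomorphic to $c$, but without this computation (or an a priori identification $\pi(\mathbb{I}_s,c)\cong c$) you cannot conclude it is the identity. The first composite and the naturality argument are fine as sketched.
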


\begin{proof}
	The proof is analogous to the proof of \cite[Lemma 20]{Wasserman2017}.
\end{proof}

For $\cat{A}\subset \dcentcat{A}_s$, the tensoring given here relates to the original action of $\cat{A}$ on $\cat{C}$ via the tensor product in $\cat{C}$ in the following way:

\begin{lem}\label{tensorvstensoring}
	Let $a\in \cat{A}$, viewed as the object $(a,s)\in \dcentcat{A}$, then for any $c\in\cat{C}$ we have:
	$$
	a\otimes_\cat{C} c \cong\pi((a,s)\otimes_c \mathbb{I}_s,c).
	$$
\end{lem}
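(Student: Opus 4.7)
The plan is to show that the idempotent $\Pi_{(a,s)\otimes_c\mathbb{I}_s,c}$ on $(a\otimes\mathbb{I}_s)\otimes c$ factors as $\id_a\otimes\Pi_{\mathbb{I}_s,c}$, so that its image (which, by Proposition \ref{zatensoring}, is $\pi((a,s)\otimes_c\mathbb{I}_s,c)$) equals $a\otimes\pi(\mathbb{I}_s,c)$. Applying Lemma \ref{unitobjidentity} then identifies this with $a\otimes_\cat{C}c$.

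First I would unpack the half-braiding on $(a,s)\otimes_c\mathbb{I}_s$. By the definition of $\otimes_c$ in $\dcentcat{A}$, its component at $i\in\cat{O}(\cat{A})$ is the consecutive composite $(\id_a\otimes\beta^{\mathbb{I}_s}_i)\circ(s_{i,a}\otimes\id_{\mathbb{I}_s})$, using that the half-braiding of $(a,s)$ with $i$ is simply the symmetry $s_{i,a}$. Substituting into the formula in Proposition \ref{zatensoring}, the summand of $\Pi_{(a,s)\otimes_c\mathbb{I}_s,c}$ indexed by $i$ is a string diagram in which an $i$-loop wraps around the composite strand $a\otimes\mathbb{I}_s$, meeting it through this consecutive half-braiding at the bottom and through the symmetry $s_{i,a\mathbb{I}_s}=(\id_a\otimes s_{i,\mathbb{I}_s})\circ(s_{i,a}\otimes\id_{\mathbb{I}_s})$ at the top; the strand $c$ runs straight through and is unaffected.

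The key step is to slide the $i$-loop off the $a$ strand. The crossings involving $a$ in the top and bottom of the diagram are both instances of the symmetry in $\cat{A}$, and since the $i$-loop enters one side of $a$ and exits the other, their composite on the $a$-side is $s_{a,i}\circ s_{i,a}=\id_{ai}$, using symmetry of $\cat{A}$. Hence the loop can be pulled off the $a$ strand, leaving each $i$-summand equal to $\id_a$ tensored with the $i$-loop wrapping only $\mathbb{I}_s$ in the presence of $c$, i.e.\ the $i$-th summand of $\id_a\otimes\Pi_{\mathbb{I}_s,c}$. Summing with the coefficients $1/d_i$ gives the claimed factorisation.

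Finally, taking images, and using that tensoring with $a$ (which is exact by assumption on $\cat{C}$) preserves idempotent splittings, yields
\[
\pi((a,s)\otimes_c\mathbb{I}_s,c)\;=\;\im\bigl(\id_a\otimes\Pi_{\mathbb{I}_s,c}\bigr)\;\cong\;a\otimes\pi(\mathbb{I}_s,c)\;\cong\;a\otimes_\cat{C}c,
\]
where the last isomorphism is supplied by Lemma \ref{unitobjidentity}. The main obstacle is the string-diagrammatic manipulation in the sliding step: one must verify carefully that the bottom half-braiding and the top symmetry really do combine along the $a$-strand to give $\id_{ai}$ after the loop is pulled off, and that the remaining part of the diagram is genuinely $\Pi_{\mathbb{I}_s,c}$ rather than a conjugated or twisted variant. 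This is where naturality of the symmetry in $\cat{A}$ and transparency of $\cat{A}$ in itself do the real work.
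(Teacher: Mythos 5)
Your proof is correct, but it takes a genuinely different route from the paper. The paper argues by representability: it computes the co-Yoneda embedding $\cat{C}(\pi((a,s)\otimes_c\mathbb{I}_s,c),c')$ via the chain of adjunctions
$\cat{C}(\pi((a,s)\otimes_c\mathbb{I}_s,c),c')\cong\dcentcat{A}((a,s)\otimes_c\mathbb{I}_s,\underleftarrow{\underline{\cat{C}}}(c,c'))\cong\dcentcat{A}(\mathbb{I}_s,(a,s)^*\otimes_c\underleftarrow{\underline{\cat{C}}}(c,c'))\cong\cat{A}(\mathbb{I}_\cat{A},a^*\otimes_\cat{A}\underleftarrow{\cat{C}}(c,c'))\cong\cat{C}(a\otimes_\cat{C}c,c')$,
using Proposition \ref{zatensoring}, duality for $\otimes_c$, and two lemmas from the companion paper on $\dcentcat{A}$-crossed categories, then concludes by Yoneda. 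You instead work directly with the explicit cloaking idempotent from Proposition \ref{zatensoring}: you decompose the half-braiding of $(a,s)\otimes_c\mathbb{I}_s$ into its consecutive components, observe that on the $a$-strand both crossings of the $i$-loop are instances of the symmetry so the monodromy is $s_{a,i}\circ s_{i,a}=\id$, slide the loop off to obtain $\Pi_{(a,s)\otimes_c\mathbb{I}_s,c}=\id_a\otimes\Pi_{\mathbb{I}_s,c}$ (up to associators), and finish with Lemma \ref{unitobjidentity}. Both arguments are sound; the only point you should make fully explicit is the verification that the two crossings on the $a$-strand really compose to the trivial monodromy as the loop is traversed (the same transparency argument the paper itself uses when checking that the $\dcentcat{A}$-identity morphism is a morphism in $\dcentcat{A}$), and note that splitting of idempotents is preserved by the additive functor $a\otimes-$, so exactness is not actually needed there. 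The paper's route buys canonicity and naturality of the isomorphism for free and avoids any string-diagram manipulation, at the cost of importing two external lemmas; your route is more self-contained and exhibits the isomorphism concretely as an identification of idempotent splittings.
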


\begin{proof}
	Consider the co-Yoneda embedding of $\pi((a,s),c)$
	\begin{align*}
	\cat{C}(\pi((a,s)\otimes_c \mathbb{I}_s,c),c')	&\cong \dcentcat{A}((a,s)\otimes_c \mathbb{I}_s,\underleftarrow{\underline{\cat{C}}}(c,c'))\\
													&\cong \dcentcat{A}(\mathbb{I}_s,(a,s)^*\otimes_c\underleftarrow{\underline{\cat{C}}}(c,c'))\\
													&\cong \cat{A}(\mathbb{I}_\cat{A},a^*\otimes_{\cat{A}}\underleftarrow{\cat{C}}(c,c')\\
													&\cong \cat{A}(\mathbb{I}_\cat{A},\underleftarrow{\cat{C}}(a\otimes_{\cat{C}}c,c'))\\
													&\cong \cat{C}(a\otimes_{\cat{C}}c,c'),
	\end{align*}
	where the first isomorphism is Proposition \ref{zatensoring}, the third \cite[Lemma 13]{Wasserman2017b}, the fourth Equation \eqref{aintohomiso}, and the final isomorphism comes from the defining property of $\underleftarrow{\cat{C}}(a\otimes_{\cat{C}}c,c')$, Definition \ref{enrichedhomdef}. We see that $a\otimes_{\cat{C}}c$ and $\pi((a,s),c)$ have canonically isomorphic co-Yoneda embeddings, so are canonically isomorphic.
\end{proof}

The $\dcentcat{A}_s$-tensoring allows us to describe morphisms in $\underleftarrow{\underline{\cat{C}}}$ as follows.

\begin{df}\label{assocmorphisms}
	Suppose $f\colon c\rar_z c'$ is a morphism in $\underleftarrow{\underline{\cat{C}}}$, then its \emph{$\dcentcat{A}$-mate} is the morphism in $\cat{C}$
	$$
	\tilde{f}\colon \pi(z,c)\rar c',
	$$
	obtained by applying the isomorphism from Proposition \ref{zatensoring} to $f$.
	
	Given a morphism $f\colon c \rar_a c'$ in $\underleftarrow{\cat{C}}$, we get the \emph{associated $\dcentcat{A}$-enriched morphism} $\underline{f}:c \rar_{(a,s)\otimes_c \mathbb{I}_s}c$ in $\underleftarrow{\underline{\cat{C}}}$ by applying the isomorphism from Proposition \ref{zatensoring} to the mate $\bar{f}\colon ac \rar c'$ composed with the isomorphism $ac \cong \pi((a,s)\otimes_c\mathbb{I}_s,c)$ from Lemma \ref{tensorvstensoring}.
\end{df}

\subsubsection{Associated $\dcentcat{A}$-enriched functors and transformations}
We want to extend our assignment $\cat{C}\mapsto\underleftarrow{\underline{\cat{C}}}$ to a 2-functor. For this, we need to know where to send functors between braided tensor categories containing $\cat{A}$:
\begin{lem}\label{assocZAfunct}
	For any morphism $F\in \BTCA$, the associated $\cat{A}$-enriched functor $\underleftarrow{F}$ from Definition \ref{associatedAfunctor} can be viewed as a $\dcentcat{A}_s$-enriched functor $\underleftarrow{\underline{F}}$.
\end{lem}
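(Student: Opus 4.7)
The plan is to unwind what it means for the $\cat{A}$-enriched functor $\underleftarrow{F}$ to lift to a $\dcentcat{A}_s$-enriched functor, and then to verify each of the conditions separately. Concretely, I need to check three things: (a) that each morphism $\underleftarrow{F}_{c,c'}\colon \underleftarrow{\cat{C}}(c,c')\rar \underleftarrow{\cat{C}}'(Fc,Fc')$ is a morphism in $\dcentcat{A}$, i.e.\ it commutes with the half-braidings $\mathfrak{b}$ and $\mathfrak{b}'$ defined in Definition~\ref{halfbraidequip}; (b) that this assignment, viewed as an arrow in $\dcentcat{A}$, intertwines the $\dcentcat{A}_s$-enriched composition of Definition~\ref{compzadef}; and (c) that it sends the $\dcentcat{A}$-identity $1_c$ of Definition~\ref{unitzadef} to $1_{Fc}$.

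For (a), I plan to reduce everything to a computation on mates. The half-braiding $\mathfrak{b}$ is built out of the double monodromy $\beta^{-1}_{a,c'}\beta^{-1}_{c',a}$ between objects of $\cat{A}$ and objects of $\cat{C}$, followed by the symmetry $s$ in $\cat{A}$. Since $(F,\mu_{-1},\mu_0,\mu_1)$ is a braided tensor functor preserving the inclusion of $\cat{A}$ up to $\mu_0$, the isomorphisms $aF(c')\cong F(ac')$ coming from $\mu_1\circ\mu_0$ intertwine the monodromy $\beta^{-1}_{a,c'}\beta^{-1}_{c',a}$ in $\cat{C}$ with the corresponding monodromy $\beta^{-1}_{a,Fc'}\beta^{-1}_{Fc',a}$ in $\cat{C}'$. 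Compatibility with the symmetry $s$ in $\cat{A}$ is automatic because this is a morphism in $\cat{A}$ itself, and $F$ restricted to $\cat{A}$ is the identity up to $\mu_0$. Translating both sides of the naturality square for $\mathfrak{b}$ into mates (as done in the proof of Lemma~\ref{assnattrafonatural}) then reduces the check to the already-established string-diagram calculus, where the two sides become manifestly equal.

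For (b), I note that the $\dcentcat{A}_s$-enriched composition in $\underleftarrow{\underline{\cat{C}}}$ is by construction the $\cat{A}$-enriched composition of $\underleftarrow{\cat{C}}$ restricted along the inclusion $\Phi(\underleftarrow{\underline{\cat{C}}}(c',c'')\otimes_s\underleftarrow{\underline{\cat{C}}}(c,c'))\hookrightarrow \underleftarrow{\cat{C}}(c',c'')\underleftarrow{\cat{C}}(c,c')$. Since $\underleftarrow{F}$ already preserves $\cat{A}$-composition (this was the content of the first lemma after Definition~\ref{associatedAfunctor}), and since the inclusion $\otimes_s\hookrightarrow \otimes_c$ is natural in morphisms of $\dcentcat{A}$ (once (a) is established), preservation of $\dcentcat{A}_s$-composition follows by chasing the outer rectangle in the evident diagram. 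For (c), the identity $1_c$ is constructed purely from the coevaluations $\mathbb{I}_\cat{A}\rar i^*i$ in $\cat{A}$ and the identity on $c$; because the composite $\mu_1\circ\mu_0$ identifies these structures between $\cat{C}$ and $\cat{C}'$, applying $\underleftarrow{F}$ takes $1_c$ to $1_{Fc}$ on the nose.

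The main obstacle will be the half-braiding check in (a), since this is the only place where the braided (as opposed to merely monoidal) structure of $F$ is used, and one must carefully track the interaction between the monodromies, the isomorphisms $\mu_1,\mu_0$, and the symmetry on $\cat{A}$. Everything else is essentially bookkeeping once the correct mate picture is set up.
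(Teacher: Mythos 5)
Your proposal is correct and its essential content coincides with the paper's own (one-sentence) proof: the paper argues exactly your point (a), that braidedness of $F$ makes $\underleftarrow{F}_{c,c'}$ commute with the half-braidings and hence lift to a morphism in $\dcentcat{A}$. Your additional checks (b) and (c) are the routine bookkeeping the paper leaves implicit, since the $\dcentcat{A}_s$-enriched composition and identities are restrictions of the $\cat{A}$-enriched ones, which $\underleftarrow{F}$ is already known to preserve.
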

\begin{proof}
	As $F$ is braided, the morphism $\underleftarrow{F}_{c,c'}$ will be compatible with the braiding and therefore define a morphism $\underleftarrow{\underline{F}}_{c,c'}$ in $\dcentcat{A}$.
\end{proof}

For natural transformations, we have:

\begin{lem}\label{assocZAnattrafo}
	Let $\eta:(F,\mu_{-1},\mu_0,\mu_1)\rar (G,\nu_{-1},\nu_0,\nu_1)$ be a monoidal natural transformation between functors on $\cat{C}\supset\cat{A}$ with $\nu_{0}\eta|_\cat{A}=\mu_{0}$. Setting $\underleftarrow{\underline{\eta}}_c$ to be the associated $\dcentcat{A}$-enriched morphism (Definition \ref{assocmorphisms}) for $\eta_c:F(c)\rar G(c)$. Then $\underleftarrow{\underline{\eta}}:\underleftarrow{\underline{F}}\Rightarrow \underleftarrow{{G}}$ is a natural transformation between the associated $\dcentcat{A}$-enriched functors.
\end{lem}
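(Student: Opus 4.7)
The goal is to show two things: first, that each component $\underleftarrow{\underline{\eta}}_c$ is a legitimate morphism in the $\dcentcat{A}_s$-enriched category $\underleftarrow{\underline{\cat{C}}}'$ (i.e.\ that the corresponding map $\mathbb{I}_s \rar \underleftarrow{\underline{\cat{C}}}'(Fc,Gc)$ commutes with half-braidings); and second, that the assignment $c\mapsto \underleftarrow{\underline{\eta}}_c$ satisfies the enriched naturality square with respect to the $\dcentcat{A}_s$-enriched composition.

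For the first point, unpacking Definition \ref{assocmorphisms}, the mate of $\eta_c\colon F(c)\rar G(c)$ is a morphism $Fc\rar_{\mathbb{I}_\cat{A}}Gc$, which produces an associated $\dcentcat{A}$-enriched morphism $\underleftarrow{\underline{\eta}}_c\colon Fc\rar_{\mathbb{I}_s}Gc$ corresponding to $\mathbb{I}_s\rar \underleftarrow{\underline{\cat{C}}}'(Fc,Gc)$ in $\dcentcat{A}$. I would verify compatibility with the half-braidings by a string diagram computation essentially identical to the one used to show that the identity $1_c$ from Definition \ref{unitzadef} is a morphism in $\dcentcat{A}$: once the loops defining the half-braiding on $\mathbb{I}_s$ (cf.\ \cite[Equation \ref{STunithalfbraid}]{Wasserman2017}) and on $\underleftarrow{\underline{\cat{C}}}'(Fc,Gc)$ (Equation \eqref{halfbraidingonhomobjects}) are pulled around $\eta_c$, transparency of $\cat{A}$-objects to each other and the fact that $\eta_c$ has target in $\cat{C}'$ collapse the resulting diagram into the one computed in the opposite order.

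For the second point, naturality amounts to the identity
\[
\underleftarrow{\underline{\eta}}_{c'}\circ \underleftarrow{\underline{F}}(f)=\underleftarrow{\underline{G}}(f)\circ \underleftarrow{\underline{\eta}}_c
\]
for every morphism $f\colon c\rar_z c'$ in $\underleftarrow{\underline{\cat{C}}}$. Since the $\dcentcat{A}_s$-enriched composition is, by Definition \ref{compzadef}, obtained from the underlying $\cat{A}$-enriched composition by inclusion into the convolution $\otimes_s$-product, it suffices to verify the equation after applying the forgetful functor $\Phi\colon \dcentcat{A}\rar \cat{A}$. But under $\Phi$, the equation reduces to the naturality of the $\cat{A}$-enriched transformation $\underleftarrow{\eta}$ with respect to $\underleftarrow{F}(f)$ and $\underleftarrow{G}(f)$, which is exactly Lemma \ref{assnattrafonatural}. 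Composing with the inclusions of $\otimes_s$-subobjects carries this identity back to the $\dcentcat{A}_s$-enriched setting.

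The only nontrivial step is the first one, and even there the relevant identity is the same cancellation of braiding monodromy against the cup/cap structure of $\mathbb{I}_s$ that was invoked to prove Definition \ref{unitzadef} well-defined. I therefore expect the proof to be a short verification, with the main technical content being that string-diagram manipulation.
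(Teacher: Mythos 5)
Your proof takes essentially the same route as the paper, whose entire argument is the single line that naturality follows from Lemma \ref{assnattrafonatural}; your reduction of the enriched naturality square to that lemma (by passing to the underlying $\cat{A}$-enriched data) is exactly the intended content. Your first step --- directly verifying by string diagrams that $\underleftarrow{\underline{\eta}}_c$ commutes with the half-braidings --- is harmless but redundant, since Definition \ref{assocmorphisms} produces the associated $\dcentcat{A}$-enriched morphism through the isomorphism of Proposition \ref{zatensoring}, which lands in hom-spaces of $\dcentcat{A}$ by construction.
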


\begin{proof}
	Naturality follows from Lemma \ref{assnattrafonatural}.
\end{proof}

\begin{prop}
	The assignment $\underleftarrow{\underline{(-)}}$ on $\BTCA$ with values in the 2-category of $\dcentcat{A}_s$-enriched categories is functorial.
\end{prop}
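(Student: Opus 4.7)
The plan is to verify the five axioms of a 2-functor: preservation of identity 1-morphisms, composition of 1-morphisms, identity 2-morphisms, vertical composition of 2-morphisms, and horizontal composition of 2-morphisms. Each of these reduces, via the explicit descriptions in Definitions \ref{associatedAfunctor} and \ref{halfbraidequip} together with Lemmas \ref{assocZAfunct} and \ref{assocZAnattrafo}, to a manipulation of mates in $\cat{C}$, so I expect the argument to be largely bookkeeping at the level already developed for the $\cat{A}$-enriched setting.

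First I would treat the 1-morphisms. The assignment on objects sends $\cat{C}$ to $\underleftarrow{\underline{\cat{C}}}$, and on 1-morphisms Lemma \ref{assocZAfunct} provides the $\dcentcat{A}_s$-enriched lift $\underleftarrow{\underline{F}}$. For the identity $\id_\cat{C}$, it is immediate from Definition \ref{associatedAfunctor} that $\underleftarrow{\id_\cat{C}}$ acts as the identity on both objects and enriched hom-objects; the lift to $\dcentcat{A}_s$ inherits this. For composition $G\circ F\colon \cat{C} \to \cat{C}''$, both $\underleftarrow{\underline{G\circ F}}$ and $\underleftarrow{\underline{G}}\circ \underleftarrow{\underline{F}}$ agree as $G\circ F$ on objects. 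On the hom-object maps I would check agreement on mates: for $f\colon c\to_a c'$, the mate of $\underleftarrow{\underline{G\circ F}}(f)$ is $aGF(c)\xrightarrow{\cong} GF(ac)\xrightarrow{\bar f} GF(c')$, and the corresponding mate of the composite factors through $G(aF(c))$. These agree because the structure isomorphisms $\mu_0,\mu_1$ for $G\circ F$ decompose as composites of the structure isomorphisms of $G$ and $F$ by the definition of composition in $\BTCA$.

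Next I would turn to the 2-morphisms. Preservation of identity 2-morphisms is immediate once one observes that the $\dcentcat{A}_s$-mate of the identity natural transformation is the identity morphism of Definition \ref{unitzadef}. For vertical composition, the mate of $\nu\circ\eta$ at $c$ is $\nu_c\circ\eta_c$, and passing to $\dcentcat{A}_s$-enriched mates via Lemma \ref{assocZAnattrafo} is compatible with composition in $\underleftarrow{\underline{\cat{C}}}$, giving $\underleftarrow{\underline{\nu\circ\eta}} = \underleftarrow{\underline{\nu}}\circ\underleftarrow{\underline{\eta}}$. For horizontal composition (whiskering of a 2-morphism with a 1-morphism on either side), I would unpack the mates on both sides and invoke the same decomposition of structure isomorphisms used in the 1-morphism step, together with the naturality of the components, in a manner closely parallel to the proof of Lemma \ref{assnattrafonatural}.

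The main obstacle will be keeping the structure isomorphisms $\mu_{-1},\mu_0,\mu_1,\nu_{-1},\nu_0,\nu_1$ straight through the whiskering calculation. None of the individual manipulations is deep, but the compatibility diagrams produce enough symbols that a careful presentation, ideally in the string-diagram notation introduced earlier in the section, is necessary for the argument to read cleanly.
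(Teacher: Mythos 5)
Your proposal is correct and follows essentially the same route as the paper's (much terser) proof: functoriality is verified by checking that composition of 1-morphisms and of 2-morphisms is preserved, using that the induced maps on hom-objects compose and that the mate of a composite is the composite of mates. You simply spell out more of the bookkeeping (identities, horizontal composition) that the paper leaves implicit.
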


\begin{proof}
	We have to check that this assignment preserves composition of functors and of natural transformations. The composite of mates is the mate of composites for degree $\mathbb{I}_s$-morphisms, and similarly, as the action of a composition of functors on hom-objects is by the composition of the maps the functors induce on hom-objects, the image of the composition under $\underleftarrow{\underline{(-)}}$ will be the composite of the images.
\end{proof}

\subsubsection{Monoidal structure}
In this section we examine the sense in which $\underleftarrow{\underline{\cat{C}}}$ is monoidal. The monoidal structure on $\cat{C}$ will give rise to a monoidal structure on $\underleftarrow{\underline{\cat{C}}}$, however, this monoidal structure will not factor over the $(\dcentcat{A},\otimes_s)$-product. Rather, it will factor through the convolution tensor product (Definition \ref{ZAprodsdef}) of $\dcentcat{A}$-enriched categories, where we use the tensor product $\otimes_c$ on the $\dcentcat{A}$-hom objects. That is, it will be a $\dcentcat{A}$-crossed tensor category (Definition \ref{zacrossedtensordef}). We first establish the existence of a functor that acts as the unit for the crossed tensor structure.

\begin{lem}\label{unitforZAenrich}
	Let $\cat{C}$ be a braided tensor category containing $\cat{A}$. Then the associated $\dcentcat{A}_s$-enriched functor for the inclusion $\cat{A}\subset\cat{C}$ is a functor
	$$
	\mathbb{I}_\cat{C}:\cat{A}_\cat{Z}\rar \underleftarrow{\underline{\cat{C}}}.
	$$
	The category $\cat{A}_\cat{Z}$ was defined in Definition \ref{ZAprodsdef}.
\end{lem}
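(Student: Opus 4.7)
The plan is to obtain this functor as a direct instance of the associated $\dcentcat{A}_s$-enriched functor construction of Lemma \ref{assocZAfunct}, applied to the inclusion $\cat{A}\subset\cat{C}$ viewed as a morphism in $\BTCA$, and then to identify the source $\underleftarrow{\underline{\cat{A}}}$ with $\cat{A}_\cat{Z}$.

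First I would observe that $\cat{A}$ is itself an object of $\BTCA$ via the identity inclusion $\cat{A}\subset\cat{A}$, and that under this viewpoint the given braided tensor inclusion $\iota\colon\cat{A}\subset\cat{C}$ is a morphism in $\BTCA$ (with its specified structure isomorphisms). Applying Lemma \ref{assocZAfunct} therefore produces a $\dcentcat{A}_s$-enriched functor $\underleftarrow{\underline{\iota}}\colon\underleftarrow{\underline{\cat{A}}}\rar\underleftarrow{\underline{\cat{C}}}$.

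The substantive step is then to identify $\underleftarrow{\underline{\cat{A}}}$ with $\cat{A}_\cat{Z}$. Unpacking Definition \ref{enrich}, the hom-object $\underleftarrow{\cat{A}}(a,a')$ is the usual internal hom $[a,a']$ in $\cat{A}$ (given by $a^*\otimes a'$ via rigidity). Unpacking Definition \ref{halfbraidequip}, its half-braiding is built from the monodromy $\beta^{-1}_{b,a'}\beta^{-1}_{a',b}$ and the symmetry $s$. Because $\cat{A}$ is symmetric, that monodromy is trivial, so the half-braiding reduces to the one induced by $s$ alone; this is exactly the half-braiding that defines $\cat{A}_\cat{Z}$ in \cite[Lemma \ref{ZXunitconvprod}]{Wasserman2017b}. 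A parallel simplification, again using symmetry of $\cat{A}$, shows that the composition of Definition \ref{compzadef} and the identity morphisms of Definition \ref{unitzadef} coincide with those of $\cat{A}_\cat{Z}$.

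I expect the main obstacle to be the bookkeeping that identifies the $\dcentcat{A}_s$-enriched composition on $\underleftarrow{\underline{\cat{A}}}$ with the composition on $\cat{A}_\cat{Z}$: one has to trace the factorisation through $\otimes_s$ from Definition \ref{compzadef} and verify, using that all objects of $\cat{A}$ are transparent to each other, that this reduces to the internal composition in $\cat{A}$ viewed as an object of $\dcentcat{A}$ via the symmetry. Once this identification is in place, Lemma \ref{assocZAfunct} immediately upgrades $\underleftarrow{\underline{\iota}}$ to the desired functor $\mathbb{I}_\cat{C}\colon \cat{A}_\cat{Z}\rar\underleftarrow{\underline{\cat{C}}}$.
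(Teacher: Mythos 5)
Your proposal is correct and follows essentially the same route as the paper, whose entire proof is the observation that $\underleftarrow{\underline{\cat{A}}}=\cat{A}_\cat{Z}$; you simply spell out why this identification holds (triviality of the monodromy in a symmetric category reduces the half-braiding of Definition \ref{halfbraidequip} to the symmetry) and make explicit the appeal to Lemma \ref{assocZAfunct}.
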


\begin{proof}
	We simply observe that $\underleftarrow{\underline{\cat{A}}}=\cat{A}_\cat{Z}$.
\end{proof}

The reader can also think of $\cat{A}_\cat{Z}$ as being defined by $\underleftarrow{\underline{\cat{A}}}=\cat{A}_\cat{Z}$. 

\begin{prop}\label{cunderlliscrossedtensor}
	If $\cat{C}$ is a braided tensor category containing a symmetric spherical fusion category $\cat{A}$, then $\underleftarrow{\underline{\cat{C}}}$ is $\dcentcat{A}$-crossed tensor (see Definition \ref{zacrossedtensordef}), with monoidal structure from Definition \ref{inducedatensor} lifted to the $\dcentcat{A}_s$-enriched category, with unit functor $\mathbb{I}_\cat{C}$ from Lemma \ref{unitforZAenrich}, and associators and unitors given by the mates to the associators and unitors in $\cat{C}$.
\end{prop}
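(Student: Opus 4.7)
The plan is to verify in turn each piece of structure required by \cite[Definition \ref{ZXzacrossedtensordef}]{Wasserman2017b}: the enriched tensor functor out of the convolution product, the unit functor (already furnished by Lemma \ref{unitforZAenrich}), the associator and unitors, and then the pentagon and triangle. Throughout I work with $\dcentcat{A}$-mates in the sense of Definition \ref{assocmorphisms}, so that questions about morphisms in $\underleftarrow{\underline{\cat{C}}}$ can be transported to questions about morphisms in $\cat{C}$.

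First I would upgrade the $\cat{A}$-monoidal structure from Definition \ref{inducedatensor} to a $\dcentcat{A}_s$-enriched functor out of the convolution product $\underleftarrow{\underline{\cat{C}}}\otimes_c \underleftarrow{\underline{\cat{C}}}$. On objects the functor is $\otimes_\cat{C}$; on hom-objects I use the map of Definition \ref{inducedatensor} and must check it lifts to $\dcentcat{A}$, i.e.\ that it intertwines the half-braiding on the source (which by \eqref{halfbraidreq} is the consecutive composite of the two half-braidings from Equation \eqref{halfbraidingonhomobjects}) with the single half-braiding on $\underleftarrow{\underline{\cat{C}}}(c_1c_2,c_1'c_2')$. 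Written on mates via Equation \eqref{matestensstring}, this reduces to a string-diagram manipulation showing that the inverse monodromy between $a$ and $c_1'c_2'$ factors, up to naturality of the braiding in $\cat{C}$, as the inverse monodromy between $a$ and $c_1'$ followed by that between $a$ and $c_2'$. The braid-bookkeeping with $\beta_{a_2,c_1}$ introduced by the tensor product map commutes past these monodromies because objects of $\cat{A}$ are transparent to each other in $\cat{C}$.

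Next I would check that this assignment is indeed enriched-functorial. Respect for composition at the $\cat{A}$-level is Lemma \ref{indeedatensor}; since the $\dcentcat{A}_s$-enriched composition of Definition \ref{compzadef} is defined by restricting the $\cat{A}$-composition along the inclusion $\Phi(-\otimes_s-)\hookrightarrow \Phi(-)\otimes \Phi(-)$, the interchange law in $\underleftarrow{\underline{\cat{C}}}$ follows. For identities one needs $\underline{\id_{c_1}}\otimes\underline{\id_{c_2}}=\underline{\id_{c_1c_2}}$, which is immediate from Definition \ref{unitzadef} once one observes that the diagrams defining both sides agree after applying the $\dcentcat{A}$-mate correspondence of Proposition \ref{zatensoring}.

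Third, the associator $\alpha_{c_1,c_2,c_3}$ and unitors in $\cat{C}$ are morphisms of degree $\mathbb{I}_\cat{C}$ and so, via Definition \ref{assocmorphisms} combined with the isomorphism $\pi(\mathbb{I}_s,c)\cong c$ of Lemma \ref{unitobjidentity}, give rise to morphisms $\mathbb{I}_s\to \underleftarrow{\underline{\cat{C}}}((c_1c_2)c_3,c_1(c_2c_3))$ in $\dcentcat{A}$. Naturality in the enriched sense is checked on $\dcentcat{A}$-mates: any enriched morphism $z\to \underleftarrow{\underline{\cat{C}}}(c,c')$ corresponds via Proposition \ref{zatensoring} to a morphism $\pi(z,c)\to c'$ in $\cat{C}$, and the naturality square collapses onto the naturality square for $\alpha$ in $\cat{C}$. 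Pentagon and triangle axioms in $\underleftarrow{\underline{\cat{C}}}$ follow by the same transport, since the $\dcentcat{A}$-mate correspondence turns composition in $\underleftarrow{\underline{\cat{C}}}$ into composition in $\cat{C}$ on idempotent-completed subobjects.

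The main obstacle is the first step. The subtlety is that the source uses the convolution product $\otimes_c$ on half-braidings rather than the symmetric $\otimes_s$, so the inverse monodromy over $c_1'c_2'$ must be shown to match, on the nose, two separate inverse monodromies composed consecutively. This is precisely the point where the specific choice of inverse monodromy in Definition \ref{halfbraidequip}, motivated by the failure-of-braiding calculation \eqref{failuremonodromy}, is engineered to make the $\dcentcat{A}$-crossed structure close up; once that string-diagram identity is in hand, the remaining functoriality and coherence verifications are formal translations of the corresponding facts in $\cat{C}$.
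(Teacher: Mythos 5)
Your core verification---that the tensor map $\underleftarrow{\cat{C}}(c_1,c_1')\otimes_c\underleftarrow{\cat{C}}(c_2,c_2')\to\underleftarrow{\cat{C}}(c_1c_2,c_1'c_2')$ intertwines the consecutive half-braiding on the convolution product with the single half-braiding on the target, checked on mates as a string-diagram identity resting on naturality of the braiding in $\cat{C}$---is exactly the heart of the paper's proof, and your treatment of compatibility with composition (the enriched composition is a restriction of the $\cat{A}$-level one, so Lemma \ref{indeedatensor} transports) and of the pentagon (transport along $\dcentcat{A}$-mates) likewise matches what the paper does.

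The one place where your argument has a genuine gap is the unitors. In the $\dcentcat{A}$-crossed setting the unit is not an object but the functor $\mathbb{I}_\cat{C}\colon\cat{A}_\cat{Z}\to\underleftarrow{\underline{\cat{C}}}$ of Lemma \ref{unitforZAenrich}, so the left unitor must be a natural isomorphism between $\otimes_{\underleftarrow{\underline{\cat{C}}}}\circ(\mathbb{I}_\cat{C}\cattens{c}\id)$ and the unitor functor for $\cattens{c}$, with a component $\mathbb{I}_\cat{C}(a)\otimes c\cong\pi((a,s)\otimes_c\mathbb{I}_s,c)$ for \emph{every} $a\in\cat{A}$, not only at $a=\mathbb{I}_\cat{A}$; the mate of the unitor of $\cat{C}$ supplies only that single component, and the full family comes from Lemma \ref{tensorvstensoring}. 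More importantly, for the right unitor the expression $c\otimes\mathbb{I}_\cat{C}(a)$ has no a priori identification with the left $\dcentcat{A}_s$-tensoring of Proposition \ref{zatensoring}: the paper constructs a separate right $\dcentcat{A}_s$-action commuting with the left one and deduces the triangle identity from that commutation, rather than by transporting the triangle in $\cat{C}$. Your claim that the triangle ``follows by the same transport'' skips this step; you need either the commuting-actions argument or an explicit half-braiding computation on the right-hand slot to close the triangle.
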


\begin{proof}
	As the monoidal structure from Definition \ref{inducedatensor} is compatible with the composition, and the composition in $\underleftarrow{\underline{\cat{C}}}$ is a restriction of this, the lift of the monoidal structure will be compatible with composition. We still need to show that the morphisms
	$$
	\underleftarrow{\cat{C}}(c_1,c_1')\otimes_c \underleftarrow{\cat{C}}(c_2,c_2') \xrightarrow{\otimes} \underleftarrow{\cat{C}}(c_1c_2,c_1'c_2')
	$$
	are compatible with the braiding, so that they lift to $\dcentcat{A}$. In $\underleftarrow{\underline{\cat{C}}}\cattens{c} \underleftarrow{\underline{\cat{C}}}$, the left hand object will be equipped with the consecutive braiding on both factors, while the braiding on the right hand side comes from the braiding monodromy of $c_1c_2$. Comparing these braidings with respect to some $a\in\cat{A}$ in terms of mates for $f_1\colon c_1\rar_{a_1}c_1'$ and $f_2\colon c_2\rar_{a_2}c_2'$ gives
	$$
	\hbox{
		\begin{tikzpicture}[baseline=(current  bounding  box.center)]
		\node (C) at (0,2)[draw,minimum width=20pt,minimum height=10pt,thick]{$\bar{f}_1$};
		\node (D) at (1.5,2)[draw,minimum width=20pt,minimum height=10pt,thick]{$\bar{f}_2$};
		\node (b3) at (-1,-2.5){$a_1$};
		\node (b4) at (-0.5,-2.5){$a_2$};
		\node (b5) at (1,-2.5){$c_1$};
		\node (b6) at (1.5,-2.5){$c_2$};
		\node (c1) at (0,3){$c_1'$};
		\node (c2) at (1.5,3){$c_2'$};
		
		\node (a) at (-1.5,-2.5){$a$};
		\coordinate (ac1) at (0.6, 0.8);
		\coordinate (ac2) at (0,0);
		\coordinate (ac3) at (1.7,1);
		\coordinate (ac4) at (0,-1.5);
		\coordinate (ac5) at (2,-1.3);
		\coordinate (ao) at (2,3);
		
		\begin{knot}[clip width=4,clip radius =4pt]
			\strand [thick] (c1) to (C);
			\strand [thick] (c2) to (D);
			\strand [thick] (C) to [out=-90,in=90] (b3);
			\strand [thick] (D.-120) to [out=-90,in=90] (b4);
			\strand [thick] (C.-55)	to [out=-90,in=90] (b5);
			\strand [thick] (D)	to [out=-90,in=90] (b6);
			\strand [thick] (a) to [out=90,in=90] (ac1) to [out=-90,in=90] (ac2) to [out=-90,in=90] (ac3) to [out=-90,in=90] (ac4) to [out=-90,in=-90] (ac5) to [out=90,in=-90] (ao);
			\flipcrossings{1,5,6,3,9,7}
		\end{knot}
	\end{tikzpicture}
}
\tn{ and }
\hbox{
	\begin{tikzpicture}[baseline=(current  bounding  box.center)]
	\node (C) at (0,2)[draw,minimum width=20pt,minimum height=10pt,thick]{$\bar{f}_1$};
	\node (D) at (1.5,2)[draw,minimum width=20pt,minimum height=10pt,thick]{$\bar{f}_2$};
	\node (b3) at (-1,-2.5){$a_1$};
	\node (b4) at (-0.5,-2.5){$a_2$};
	\node (b5) at (1,-2.5){$c_1$};
	\node (b6) at (1.5,-2.5){$c_2$};
	\node (c1) at (0,3){$c_1'$};
	\node (c2) at (1.5,3){$c_2'$};
	
	\node (a) at (-1.5,-2.5){$a$};
	\coordinate (ac3) at (1.7,-0.8);
	\coordinate (ac4) at (0,-1.5);
	\coordinate (ac5) at (2,-1.3);
	\coordinate (ao) at (2,3);
	
	\begin{knot}[clip width=4,clip radius =4pt]
	\strand [thick] (c1) to (C);
	\strand [thick] (c2) to (D);
	\strand [thick] (C) to [out=-90,in=90] (b3);
	\strand [thick] (D.-120) to [out=-110,in=90] (b4);
	\strand [thick] (C.-55)	to [out=-90,in=90] (b5);
	\strand [thick] (D)	to [out=-90,in=90] (b6);
	\strand [thick] (a) to [out=90,in=90] (ac3) to [out=-90,in=90] (ac4) to [out=-90,in=-90] (ac5) to [out=90,in=-90] (ao);
	\flipcrossings{1,5,3,7}
	\end{knot}
	\end{tikzpicture}
},
$$
for first braiding and then applying $\otimes$ and vice versa, respectively. These are indeed equal.

The associators satisfy the pentagon equations by virtue of the associators in $\cat{C}$ satisfying the pentagon equations. To define the unitors in $\underleftarrow{\underline{\cat{C}}}$, observe that we have natural isomorphisms between the tensor product composed with the unit functor and the unitor (see Definition \ref{ZAprodsdef}) for $\cattens{c}$ with components
$$
\mathbb{I}_\cat{C}(a)\otimes c \xrightarrow{\cong} \pi((a,s)\otimes_c \mathbb{I}_s, c),
$$
coming from Lemma \ref{tensorvstensoring}. A similar construction to the one done in Lemma \ref{zatensoring} gives a right action of $\dcentcat{A}_s$ on $\underleftarrow{\underline{\cat{C}}}$, and this action will commute with the left action. Using a result similar to Lemma \ref{tensorvstensoring} for this action allows us to define the right unitor. These unitors will satisfy the triangle equation by virtue of the left and right actions commuting.
\end{proof}

\subsubsection{Associated $\dcentcat{A}$-crossed tensor functors}
It turns out that the associated enriched functor $\underleftarrow{\underline{F}}$ to a functor $F\colon \cat{C}\rar \cat{D}$ of braided tensor categories containing $\cat{A}$ from Lemma \ref{assocZAfunct} carries naturally the structure of a $\dcentcat{A}$-crossed tensor functor (see Definition \ref{tensorfunctsnattrafos}). The $\dcentcat{A}$-crossed tensor structure on $\underleftarrow{\underline{F}}$ is obtained as follows:

\begin{lem}\label{assocZAXTfunc}
	The $\dcentcat{A}_s$-enriched functor $\underleftarrow{\underline{F}}$ is $\dcentcat{A}$-crossed tensor with as structure morphisms $\underline{\mu}_0$ and $\underline{\mu}_1$ the enriched natural tranformations with components the $\dcentcat{A}$-mates to the components of $\mu_0$ and $\mu_1$, respectively.
\end{lem}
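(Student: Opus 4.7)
The plan is to verify each of the three conditions that define a $\dcentcat{A}$-crossed tensor functor: (i) that $\underline{\mu}_0$ and $\underline{\mu}_1$, defined componentwise as the $\dcentcat{A}$-mates of $\mu_0$ and $\mu_1$, land in morphisms of $\dcentcat{A}$; (ii) that they assemble into $\dcentcat{A}_s$-enriched natural transformations between the appropriate functors; and (iii) that they satisfy the hexagon/triangle coherences required of a crossed tensor functor.

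For (i), I would unpack $\mu_1\colon F(c\otimes_{\cat{C}}c')\xrightarrow{\cong} F(c)\otimes_{\cat{C}'}F(c')$ and $\mu_0\colon F|_\cat{A}\cong \id_\cat{A}$, view each component as a morphism in $\cat{C}'$, and then take the corresponding $\dcentcat{A}$-mate via Definition \ref{assocmorphisms}. To check these live in $\dcentcat{A}$, we must verify compatibility with the half-braidings on the relevant hom-objects, which on $\underleftarrow{\underline{\cat{C}}}(c,c')$ were defined (Definition \ref{halfbraidequip}) via the braiding monodromy in $\cat{C}$ together with the symmetry of $\cat{A}$. Since $F$ is a braided tensor functor and $\mu_0,\mu_1$ are monoidal natural isomorphisms commuting with the $\cat{A}$-inclusions, the braiding monodromy is preserved on the nose; translating to string diagrams this is the usual argument that $F$ sends the half-braiding data for $\cat{C}$ to that for $\cat{C}'$.

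For (ii), the enriched naturality squares for $\underline{\mu}_1$ and $\underline{\mu}_0$ are obtained from the ordinary naturality of $\mu_0$, $\mu_1$ in $\cat{C}'$, together with the relation $\nu_0\kappa|_\cat{A}=\mu_0$ that all morphisms in $\BTCA$ satisfy. The argument parallels the one carried out in Lemma \ref{assnattrafonatural}: we rewrite both routes in terms of $\dcentcat{A}$-mates, slide $\mu_1$ past $\underleftarrow{\underline{F}}(f)$ using monoidal naturality, and absorb the resulting instances of $\mu_0$ using the normalisation condition. Crucially, the target of $\underline{\mu}_1$ is the composite $\underleftarrow{\underline{F}}(-)\otimes\underleftarrow{\underline{F}}(-)$ obtained via the lifted monoidal structure of Proposition \ref{cunderlliscrossedtensor}, whose factoring is through $\otimes_c$ (the convolution product of $\dcentcat{A}$-enriched categories), which is exactly what the definition of $\dcentcat{A}$-crossed tensor functor in \cite[Definition \ref{ZXtensorfuncts}]{Wasserman2017b} requires.

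For (iii), the pentagon and triangle coherences for $(\underleftarrow{\underline{F}},\underline{\mu}_0,\underline{\mu}_1)$ are inherited from those of $(F,\mu_{-1},\mu_0,\mu_1)$ in $\BTCA$: since the associators and unitors on $\underleftarrow{\underline{\cat{C}}}$ and $\underleftarrow{\underline{\cat{C}'}}$ are, by Proposition \ref{cunderlliscrossedtensor}, just the $\dcentcat{A}$-mates of those in $\cat{C}$ and $\cat{C}'$, taking $\dcentcat{A}$-mates of the hexagon/triangle identities for $F$ yields precisely the required identities in the $\dcentcat{A}$-enriched setting. The main obstacle is step (i)/(ii): keeping track of the interplay between the half-braidings on hom-objects, which contain a double braiding contribution, and the monoidal coherence data of $F$. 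I expect the cleanest proof is to perform these checks in string diagrams using the convention from Equation \eqref{problemdiagrams}, exploiting the symmetry of $\cat{A}$ to simplify crossings with $\cat{A}$-strands and the braided-monoidality of $F$ to commute $\mu_1$ with the monodromy.
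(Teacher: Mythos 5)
Your proposal is correct and follows essentially the same route as the paper, which simply observes that $\underline{\mu}_1$ is a $\dcentcat{A}_s$-enriched natural isomorphism compatible with the (inherited) associators and that $\underline{\mu}_0$ relates $\underleftarrow{\underline{F}}\circ\mathbb{I}_\cat{K}$ to $\mathbb{I}_\cat{L}$; your extra detail on why the mates land in $\dcentcat{A}$ (braidedness of $F$ preserving the monodromy, as in Lemma \ref{assocZAfunct}) fills in what the paper leaves implicit. The only slip is cosmetic: $\mu_0$ is an isomorphism $F\circ i_\cat{C}\cong i_{\cat{C}'}$ between the two $\cat{A}$-inclusions, not $F|_\cat{A}\cong\id_\cat{A}$.
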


\begin{proof}
	Observe that $\underline{\mu}_1$ defined in this way is indeed a $\dcentcat{A}_s$-enriched natural isomorphism between $F(-\otimes-)$ and $F(-)\otimes F(-)$. As the associators come from the associators in the braided tensor categories containing $\cat{A}$, this natural isomorphism will be compatible with the associators. 
	
	Denoting the inclusions of $\cat{A}$ into $\cat{C}$ and $\cat{D}$ by $i_\cat{C}$ and $i_\cat{D}$, respectively, the natural isomorphism $\mu_{0}$ takes $F\circ i_\cat{C}$ to $\cat{D}$, monoidally. This implies that $\underline{\mu}_{0}$ gives a natural isomorphism between $\underleftarrow{\underline{F}}\circ\mathbb{I}_\cat{C}$ and $\mathbb{I}_\cat{D}$, as desired.
\end{proof}

For natural transformations, we use the following:
\begin{lem}
	The associated $\dcentcat{A}$-natural transformation $\underleftarrow{\underline{\eta}}$ is a monoidal natural transformation of $\dcentcat{A}$-crossed tensor functors (see Definition \ref{zaxbtdef}).
\end{lem}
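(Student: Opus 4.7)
The plan is to check the two defining axioms of a monoidal natural transformation of $\dcentcat{A}$-crossed tensor functors (as in \cite[Definition \ref{ZXzaxbtdef}]{Wasserman2017b}): compatibility with the tensor constraints $\underline{\mu}_1, \underline{\nu}_1$, and compatibility with the unit constraints $\underline{\mu}_0, \underline{\nu}_0$. Since every ingredient here is built by taking $\dcentcat{A}$-mates of the corresponding data for $F$, $G$ and $\eta$, I expect each axiom to reduce cleanly to the corresponding axiom for the unenriched monoidal natural transformation $\eta$ between the braided tensor functors $(F,\mu_{-1},\mu_0,\mu_1)$ and $(G,\nu_{-1},\nu_0,\nu_1)$.

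Concretely, I first recall that the components of $\underleftarrow{\underline{\eta}}$ are, by Lemma \ref{assocZAnattrafo}, the $\dcentcat{A}$-enriched associated morphisms to the $\cat{A}$-enriched mates of the components $\eta_c$, and likewise for $\underline{\mu}_0,\underline{\mu}_1,\underline{\nu}_0,\underline{\nu}_1$ from Lemma \ref{assocZAXTfunc}. The tensor compatibility square
\[
\begin{tikzcd}[column sep=large]
\underleftarrow{\underline{F}}(c_1)\otimes_c \underleftarrow{\underline{F}}(c_2) \arrow[r,"\underleftarrow{\underline{\eta}}_{c_1}\otimes \underleftarrow{\underline{\eta}}_{c_2}"]\arrow[d,"\underline{\mu}_1"'] & \underleftarrow{\underline{G}}(c_1)\otimes_c \underleftarrow{\underline{G}}(c_2) \arrow[d,"\underline{\nu}_1"]\\
\underleftarrow{\underline{F}}(c_1\otimes c_2) \arrow[r,"\underleftarrow{\underline{\eta}}_{c_1\otimes c_2}"'] & \underleftarrow{\underline{G}}(c_1\otimes c_2)
\end{tikzcd}
\]
in $\underleftarrow{\underline{\cat{C}}}'$ is a diagram of degree-$\mathbb{I}_s$ morphisms between the relevant objects, so by Proposition \ref{zatensoring} it suffices to check commutativity after translating to ordinary morphisms in $\cat{C}'$. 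Under the mate correspondence this becomes exactly the (unenriched) monoidality square for $\eta$, which commutes by hypothesis.

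For the unit axiom I would unpack the statement that $\underleftarrow{\underline{\eta}}\ast \mathbb{I}_\cat{C}$, composed with $\underline{\nu}_0$, equals $\underline{\mu}_0$ as enriched natural transformations $\mathbb{I}_\cat{C}\Rar \mathbb{I}_{\cat{C}'}$ between the unit functors $\cat{A}_\cat{Z}\rar \underleftarrow{\underline{\cat{C}}}'$ from Lemma \ref{unitforZAenrich}. Since on objects of $\cat{A}\subset\cat{C}$ the $\dcentcat{A}$-mate construction is just the inclusion of $\cat{A}$-morphisms into $\dcentcat{A}$-morphisms of degree $\mathbb{I}_s$, this axiom is a direct translation of the original condition $\nu_0\circ\eta|_\cat{A}=\mu_0$ that is part of the definition of a 2-morphism in $\BTCA$ (Definition \ref{bfcdef}).

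The main obstacle I anticipate is bookkeeping: one must carefully verify that the isomorphism of Proposition \ref{zatensoring} genuinely intertwines the composition of degree-$\mathbb{I}_s$ morphisms in $\underleftarrow{\underline{\cat{C}}}'$ (including precomposition with the enriched tensor constraint $\underline{\mu}_1$, whose $\dcentcat{A}$-mate is the mate of $\mu_1$ tweaked by the half-braiding idempotent $\Pi$) with the ordinary composition in $\cat{C}'$. This is essentially the same naturality-of-braiding computation already carried out for $\underleftarrow{\underline{F}}$ in Proposition \ref{cunderlliscrossedtensor}, so no new phenomena appear; once this translation is in place, both axioms follow from the corresponding axioms for $\eta$ via Lemma \ref{assnattrafonatural} applied componentwise.
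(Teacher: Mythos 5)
Your proposal is correct and follows essentially the same route as the paper: the tensor-compatibility square reduces, after passing through the mate correspondence, to the ordinary monoidality square for $\eta$, and the unit triangle is a direct restatement of the condition $\nu_0\circ\eta|_\cat{A}=\mu_0$ from Definition \ref{bfcdef}. The paper states both reductions more tersely but with the same logical content, so your additional bookkeeping via Proposition \ref{zatensoring} is just an expanded version of the same argument.
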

\begin{proof}
	There are two conditions in Definition \ref{zaxbtdef}. The first (which is analogous to the usual monoidality axiom) is satisfied by the fact that $\eta$ is monoidal. The second asks for commutativity of
		\begin{center}
		\begin{tikzcd}[row sep=tiny]
		\underleftarrow{\underline{F}}\circ \mathbb{I}_\cat{C}	\arrow[dd,"\underleftarrow{\underline{\eta}}\circ\mathbb{I}_\cat{C}"',Rightarrow] \arrow[rd,"\underleftarrow{\underline{\mu}}_0",Rightarrow]	&						\\
		&\mathbb{I}_\cat{D}\\
		\underleftarrow{\underline{G}}\circ \mathbb{I}_\cat{C}	\arrow[ru,Rightarrow,"\underleftarrow{\underline{\nu}}_0"']
		\end{tikzcd},
	\end{center}
	which follows from the fact that $\eta$ preserves the inclusion of $\cat{A}$ into $\cat{C}$ and $\cat{D}$.
\end{proof}

\subsubsection{$\dcentcat{A}$-crossed braiding}
We will now show that the braiding for $\cat{C}$ gives rise to a $\dcentcat{A}$-crossed braiding, see Definition \ref{zacrossedtensordef}. That is, the braiding will be a monoidal natural isomorphism between $\otimes_{\underleftarrow{\underline{\cat{C}}}}$ and $\otimes_{\underleftarrow{\underline{\cat{C}}}}\circ B$, where $B$ is the swap map for $\cattens{c}$ which uses the braiding on the hom-objects. The first step is to examine what the braiding functor $B$ from Definition \ref{ZAprodsdef} is on $\underleftarrow{\underline{\cat{C}}}\cattens{c}\underleftarrow{\underline{\cat{C}}}$. 

\begin{lem}\label{bisbeta}
	Let $\underleftarrow{\underline{\cat{C}}}$ be as above. On a the underlying object in $\cat{A}$ of a hom-object $\underleftarrow{\underline{\cat{C}}}\cattens{c}\underleftarrow{\underline{\cat{C}}}(c_1\boxtimes c_2, c_1'\boxtimes c_2')$ the braiding functor $B$ acts as 
	$$
	\underleftarrow{\cat{C}}(c_1,c_1')\underleftarrow{\cat{C}}(c_2,c_2')\xrightarrow{\beta^{-2}}\underleftarrow{\cat{C}}(c_1,c_1')\underleftarrow{\cat{C}}(c_2,c_2')\xrightarrow{s}\underleftarrow{\cat{C}}(c_2,c_2')\underleftarrow{\cat{C}}(c_1,c_1'),
	$$
	the composite of the functor $\beta^{-2}$ (Definition \ref{betasquaredinv}) with the symmetry in $\cat{A}$.
\end{lem}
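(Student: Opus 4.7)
The plan is to unpack both sides of the claimed equality at the level of hom-objects and match them via naturality. First, I will recall that the braiding functor $B$ on $\underleftarrow{\underline{\cat{C}}}\cattens{c}\underleftarrow{\underline{\cat{C}}}$ swaps objects and, on hom-objects, acts as the Drinfeld braiding of $(\dcentcat{A},\otimes_c)$. Thus at $(c_1\boxtimes c_2, c_1'\boxtimes c_2')$ the action of $B$ is the Drinfeld braiding $\underleftarrow{\underline{\cat{C}}}(c_1,c_1')\otimes_c\underleftarrow{\underline{\cat{C}}}(c_2,c_2')\to\underleftarrow{\underline{\cat{C}}}(c_2,c_2')\otimes_c\underleftarrow{\underline{\cat{C}}}(c_1,c_1')$, which on underlying objects in $\cat{A}$ is the half-braiding $\mathfrak{b}''$ of the second factor $\underleftarrow{\underline{\cat{C}}}(c_2,c_2')$ evaluated at the underlying object $\underleftarrow{\cat{C}}(c_1,c_1')\in\cat{A}$ of the first.

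Second, I will expand $\mathfrak{b}''_{\underleftarrow{\cat{C}}(c_1,c_1')}$ using Definition \ref{halfbraidequip} and Equation \eqref{halfbraidingonhomobjects}, specialised to $a=\underleftarrow{\cat{C}}(c_1,c_1')$ and $(c,c')=(c_2,c_2')$. This presents $B$ on underlying objects as the composite of an inverse double braiding $(\beta^{-1}_{\underleftarrow{\cat{C}}(c_1,c_1'),c_2'}\beta^{-1}_{c_2',\underleftarrow{\cat{C}}(c_1,c_1')})_*$ applied by postcomposition inside the representing hom $\underleftarrow{\cat{C}}(c_2,\underleftarrow{\cat{C}}(c_1,c_1')c_2')$, followed by the symmetry $s$ of $\cat{A}$ swapping the two hom-objects. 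The $s$ factor matches the $s$ in the claimed formula directly, so the task reduces to identifying the remaining inverse monodromy with the action of $\beta^{-2}$ on the hom-object $\underleftarrow{\cat{C}}(c_1,c_1')\otimes\underleftarrow{\cat{C}}(c_2,c_2')$.

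Third, I will verify this identification by representability: both endomorphisms of the hom-object are determined by their effect on an arbitrary factored presentation $(t,\bar{f}_1,\bar{f}_2)$ (as in Equation \eqref{morphinaprod}) of a morphism $a\to\underleftarrow{\cat{C}}(c_1,c_1')\otimes\underleftarrow{\cat{C}}(c_2,c_2')$ in $\cat{A}$. On such a presentation, $\beta^{-2}$ inserts the inverse double braiding between the $a_2$-strand and the $c_1$-strand before the mates are applied (Equation \eqref{inversemonodromymorph}). On the other hand, the inverse monodromy arising from $\mathfrak{b}''$ sits between $\underleftarrow{\cat{C}}(c_1,c_1')$ and $c_2'$ on the output side; applying naturality of the braiding in $\cat{C}$, one slides this crossing downwards, first past $\bar{f}_2\colon a_2c_2\to c_2'$ and then past $\bar{f}_1\colon a_1c_1\to c_1'$. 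Because objects of $\cat{A}$ are pairwise transparent, all resulting crossings involving $a_1$ or $c_2$ cancel, leaving exactly the crossing between $a_2$ and $c_1$ produced by $\beta^{-2}$.

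The main obstacle will be the string-diagram bookkeeping in the third step: one must carefully distinguish genuine braidings in $\cat{C}$ from the formal $\cat{A}$-strand crossings used to keep $\cat{A}$-objects on the left (cf.\ the convention around Equation \eqref{problemdiagrams}), and confirm that the slide is well-defined on the equivalence classes of factorisations described after Equation \eqref{factoraprodmorph}. Once this is done, the equality on every factored presentation forces equality of the underlying morphisms, and composing with $s$ completes the identification of $B$ with $s\circ\beta^{-2}$.
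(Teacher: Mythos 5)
Your overall strategy is the right one, and it is essentially the content behind the paper's one-line proof (``immediate from the definition of the half-braidings''): identify $B$ on hom-objects with the braiding of $(\dcentcat{A},\otimes_c)$, i.e.\ with the half-braiding $\mathfrak{b}''$ of $\underleftarrow{\underline{\cat{C}}}(c_2,c_2')$ evaluated at the underlying object of $\underleftarrow{\underline{\cat{C}}}(c_1,c_1')$, expand $\mathfrak{b}''$ via Equation \eqref{halfbraidingonhomobjects}, and slide the resulting inverse monodromy down to the inputs by naturality. The first two steps are fine.

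The third step, however, contains a genuine error. The inverse monodromy produced by $\mathfrak{b}''$ sits between $\underleftarrow{\cat{C}}(c_1,c_1')$ and $c_2'$. Sliding it past $\bar f_2\colon a_2c_2\to c_2'$ turns it into the inverse monodromy between $\underleftarrow{\cat{C}}(c_1,c_1')$ and $a_2c_2$, and pulling back along $f_1\colon a_1\to\underleftarrow{\cat{C}}(c_1,c_1')$ (naturality of the half-braiding in its $\cat{A}$-variable --- note this is not a slide past the mate $\bar f_1$, which is nowhere in the path of this crossing) turns it into the inverse monodromy between $a_1$ and $a_2c_2$. Transparency only cancels the $a_1$--$a_2$ part, since both lie in $\cat{A}$; the crossing between $a_1\in\cat{A}$ and $c_2\in\cat{C}$ does \emph{not} cancel --- these are exactly the nontrivial monodromies the whole construction is designed to record, and they vanish only if $\cat{A}$ is transparent in $\cat{C}$. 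So the computation lands on the inverse monodromy between $a_1$ and $c_2$, not, as you claim, on the one between $a_2$ and $c_1$. In other words, what comes out is $\beta^{-2}$ applied to the \emph{switched} presentation (the composite $\beta^{-2}\circ\mathrm{Switch}_{\cat{A}}$, which is the form actually invoked in the proof of Proposition \ref{cunllbraid} and the one that makes Lemmas \ref{otimesbetaisotimesafterbeta} and \ref{braidingbetaotimes} combine correctly), rather than the displayed composite with $\beta^{-2}$ inserting the $a_2$--$c_1$ monodromy before the symmetry. These two composites differ in general: take $f_1$ of degree $a_1=\mathbb{I}_{\cat{A}}$ and $f_2$ of a degree $a_2$ with nontrivial monodromy against $c_1$. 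The displayed formula in the lemma is loose on exactly this point, but your proof should not manufacture the cancellation needed to reach it literally; the correct resolution is to identify the $a_1$--$c_2$ monodromy you actually obtain with $\beta^{-2}$ evaluated after the switch.
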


\begin{proof}
	This is immediate from the definition of the half-braidings on the hom-objects (Definition \ref{halfbraidequip}).
\end{proof}

A braiding for a $\dcentcat{A}$-crossed tensor category $\underleftarrow{\underline{\cat{C}}}$ is by Definition \ref{zacrossedtensordef} a natural transformation from $\otimes_{\underleftarrow{\underline{\cat{C}}}}$ to $\otimes_{\underleftarrow{\underline{\cat{C}}}}\circ B$. So, our next step is to compute the composite of $B$ with the monoidal structure. It turns out that the resulting functor can be viewed as the monoidal structure $\bar{\otimes}$ on $\underleftarrow{\cat{C}}$ from Definition \ref{othertensor}. A similar argument to the proof of Proposition \ref{cunderlliscrossedtensor} shows that $\bar{\otimes}$ defines a monoidal structure on $\underleftarrow{\underline{\cat{C}}}$.

\begin{prop}\label{cunllbraid}
	Let $\cat{C}$ be a braided tensor category containing a spherical symmetric fusion category $\cat{A}$. Then the category $\underleftarrow{\underline{\cat{C}}}$ is a $\dcentcat{A}$-crossed braided tensor category.
\end{prop}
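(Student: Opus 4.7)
The plan is to promote the braiding $\beta$ of $\cat{C}$, viewed object-wise as a morphism $c_1 c_2 \to c_2 c_1$, to a $\dcentcat{A}$-crossed braiding on $\underleftarrow{\underline{\cat{C}}}$. Concretely, for each pair $c_1,c_2\in\underleftarrow{\underline{\cat{C}}}$ I would take the component of the braiding to be $\underline{\beta_{c_1,c_2}}$, i.e.\ the associated $\dcentcat{A}$-enriched morphism (Definition \ref{assocmorphisms}) of the braiding in $\cat{C}$. The content to verify is then: (i) this family assembles into a $\dcentcat{A}_s$-enriched natural isomorphism $\otimes_{\underleftarrow{\underline{\cat{C}}}}\Rightarrow \otimes_{\underleftarrow{\underline{\cat{C}}}}\circ B$, where $B$ is the braiding functor on $\underleftarrow{\underline{\cat{C}}}\cattens{c}\underleftarrow{\underline{\cat{C}}}$; (ii) the hexagon equations hold; and (iii) the monoidality conditions required of a $\dcentcat{A}$-crossed braiding are satisfied.

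For (i), the main input is the identification of $B$ given by Lemma \ref{bisbeta}, namely $B = s\circ \beta^{-2}$ on hom-objects. By Lemma \ref{otimesbetaisotimesafterbeta} we have $\otimes_{\underleftarrow{\cat{C}}}\circ \beta^{-2} = \otimes^{\beta}$, so post-composing $B$ with $\otimes_{\underleftarrow{\cat{C}}}$ yields $\otimes_{\underleftarrow{\cat{C}}}\circ s\circ \beta^{-2} = (\otimes_{\underleftarrow{\cat{C}}}\circ S)\circ \beta^{-2}$ after reinterpreting the symmetry $s$ on hom-objects as the switch $S$ of the $\cat{A}$-product on objects. Lemma \ref{braidingbetaotimes} already furnishes the desired natural isomorphism between $\otimes^{\beta}$ and $\otimes\circ S$ arising from the braiding in $\cat{C}$. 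Reading that lemma through Lemma \ref{otimesbetaisotimesafterbeta} delivers the sought natural iso between $\otimes_{\underleftarrow{\underline{\cat{C}}}}$ and $\otimes_{\underleftarrow{\underline{\cat{C}}}}\circ B$ at the level of underlying $\cat{A}$-enriched categories. What remains is to check that this natural iso lifts to the $\dcentcat{A}_s$-enrichment, i.e.\ that the components are morphisms in $\dcentcat{A}$. This is where the half-braidings of Definition \ref{halfbraidequip} do their work: the braiding monodromy that obstructed naturality in Equation \eqref{failuremonodromy} is exactly what was built into the half-braidings on hom-objects, and is cancelled by the $\beta^{-2}$ inside $B$. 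Unpacking the definition of $B$ on mates and comparing with Equation \eqref{braidingonmates} should make this transparent.

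For (ii), the hexagon equations are inherited from the hexagon equations for $\beta$ in $\cat{C}$, as observed in Lemma \ref{braidingbetaotimes}, once one rewrites everything through the identifications of (i); one has to be careful only that the associators on $\underleftarrow{\underline{\cat{C}}}$ are, by Proposition \ref{cunderlliscrossedtensor}, the $\dcentcat{A}$-mates of those in $\cat{C}$, so the hexagons transport mate-wise. For (iii), one has to verify the compatibility with the unit functor $\mathbb{I}_\cat{C}$ from Lemma \ref{unitforZAenrich}: braiding with an object coming from $\cat{A}\subset\cat{C}$ should reduce to the symmetry in $\cat{A}$ via the half-braidings. Since $\cat{A}$ is a symmetric subcategory of $\cat{C}$, the braiding $\beta_{a,c}$ composed with $\beta_{c,a}$ on $\cat{A}$-objects reduces to the symmetry, which is precisely what the half-braiding $\mathfrak{b}$ on hom-objects was designed to encode.

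I expect the main obstacle to be step (i), specifically the bookkeeping showing that the chosen components do define morphisms in $\dcentcat{A}$, i.e.\ commute with the half-braidings of Definition \ref{halfbraidequip}. The string-diagrammatic argument will amount to showing that the extra double-braiding coming from the half-braiding on the target hom-object of $\otimes_{\underleftarrow{\underline{\cat{C}}}}(c_2\boxtimes c_1, c_2'\boxtimes c_1')$ exactly absorbs the monodromy arising from moving an $\cat{A}$-strand past the braided $c_1$-$c_2$ crossing, using transparency of $\cat{A}$ in $\cat{C}$ for the $\cat{A}$-$\cat{A}$ crossings. Once this symbolic cancellation is checked, the remaining monoidality and hexagon data transport from $\cat{C}$ by the same techniques already used in Lemma \ref{indeedatensor} and Proposition \ref{cunderlliscrossedtensor}.
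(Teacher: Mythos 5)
Your proposal is correct and follows essentially the same route as the paper: it computes $\otimes_{\underleftarrow{\underline{\cat{C}}}}\circ B$ via Lemma \ref{bisbeta}, rewrites it as $\otimes^{\beta}$ composed with the switch using Lemma \ref{otimesbetaisotimesafterbeta}, and then invokes Lemma \ref{braidingbetaotimes} to produce the natural isomorphism, with the hexagons inherited from $\cat{C}$. Your additional attention to checking that the components are genuinely morphisms in $\dcentcat{A}$ (i.e.\ compatible with the half-braidings of Definition \ref{halfbraidequip}) is a point the paper leaves implicit, but it does not change the argument.
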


\begin{proof}
	We have already shown in Proposition \ref{cunderlliscrossedtensor} that $\underleftarrow{\underline{\cat{C}}}$ is $\dcentcat{A}$-crossed tensor. We have to show that the braiding for $\cat{C}$ gives a natural transformation between the tensor structure and the composite of $B$ with the tensor structure. We start by computing this composite. On objects, it is just the monoidal structure of $\cat{C}$ composed with the switch map. To see what $\otimes_{\underleftarrow{\underline{\cat{C}}}}\circ B$ is on hom-objects, observe the following. By Lemma \ref{bisbeta}, we know that $B$ acts on the underlying objects in $\cat{A}$ of the hom-objects as $\beta^{-2}\circ \tn{Switch}_\cat{A}$, where $\tn{Switch}_\cat{A}$ is the switch functor for $\cattens{\cat{A}}$. So we have the following equality of morphisms in $\cat{A}$:
	$$
	\Phi(\otimes_{\underleftarrow{\underline{\cat{C}}}}\circ B) =\otimes_{\underleftarrow{\cat{C}}}\circ\beta^{-2}\circ \tn{Switch}_\cat{A}=\bar{\otimes}\circ\tn{Switch}_\cat{A},
	$$
	where the last equality is Lemma \ref{otimesbetaisotimesafterbeta}, and $\Phi$ is the forgetful functor. By Lemma \ref{braidingbetaotimes}, the braiding in $\cat{C}$ induces a natural transformation between the last functor and $\otimes_{\underleftarrow{\cat{C}}}$. Lifting to $\dcentcat{A}$, this implies that the braiding gives a natural isomorphism between $\otimes_{\underleftarrow{\underline{\cat{C}}}}\circ B$ and $\otimes_{\underleftarrow{\underline{\cat{C}}}}$. Furthermore, the hexagon equations will be satisfied by virtue of them being satisfied in $\cat{C}$.
\end{proof}

\subsubsection{Associated braided $\dcentcat{A}$-crossed tensor functors}
Now that we have equipped our $\dcentcat{A}$-crossed tensor categories with a braiding, we can ask whether the functor from Lemma \ref{assocZAXTfunc} is braided in the sense of Definition \ref{tensorfunctsnattrafos}.
\begin{lem}
	Let $(F,\mu_{-1},\mu_{0},\mu_1):(\cat{C},\otimes_\cat{C},\beta_\cat{C})\rar (\cat{D},\otimes_\cat{D},\beta_\cat{D})$ be a morphism in $\BTCA$, then the associated $\dcentcat{A}$-crossed tensor functor $(\underleftarrow{\underline{F}},\underline{\mu}_0,\underline{\mu}_1):\underleftarrow{\underline{\cat{C}}}\rar \underleftarrow{\underline{\cat{D}}}$ from Lemma \ref{assocZAXTfunc} is $\dcentcat{A}$-crossed braided.
\end{lem}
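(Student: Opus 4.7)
The plan is to unpack the definition of a $\dcentcat{A}$-crossed braided functor from \cite[Definition \ref{ZXbraidedfunct}]{Wasserman2017b}, and show that the required hexagon-type diagram for $(\underleftarrow{\underline{F}},\underline{\mu}_0,\underline{\mu}_1)$ reduces, after passing to $\dcentcat{A}$-mates, to the standard braided-ness axiom for the underlying braided tensor functor $F\colon\cat{C}\rar\cat{D}$.

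First, I recall that the $\dcentcat{A}$-crossed braiding on $\underleftarrow{\underline{\cat{C}}}$ constructed in Proposition \ref{cunllbraid} is, on objects, simply the braiding $\beta_\cat{C}$ of $\cat{C}$, viewed as a $\dcentcat{A}$-enriched natural transformation from $\otimes_{\underleftarrow{\underline{\cat{C}}}}$ to $\otimes_{\underleftarrow{\underline{\cat{C}}}}\circ B$ via Lemma \ref{braidingbetaotimes}. Likewise for $\underleftarrow{\underline{\cat{D}}}$. The condition to check is that for every pair $c_1,c_2\in\cat{C}$, the diagram comparing the images under $\underleftarrow{\underline{F}}$ of the braiding in $\underleftarrow{\underline{\cat{C}}}$ and the braiding in $\underleftarrow{\underline{\cat{D}}}$, conjugated by $\underline{\mu}_1$ and by the braiding functor $B$ applied to $\underline{\mu}_1$, commutes.

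Next, I would translate this condition using $\dcentcat{A}$-mates (Definition \ref{assocmorphisms}). Since the components of the braiding in $\underleftarrow{\underline{\cat{C}}}$ are the associated $\dcentcat{A}$-enriched morphisms to $\beta_{\cat{C},c_1,c_2}\colon c_1\otimes_\cat{C} c_2\rar c_2\otimes_\cat{C} c_1$, and similarly for $\underleftarrow{\underline{\cat{D}}}$, the diagram reduces to equality of two morphisms in $\cat{D}$ obtained from $\beta_\cat{C}$ and $\beta_\cat{D}$ by composing with the components of $\mu_1$ and $\mu_0$ appropriately. Concretely, the top route sends the mate to $F(\beta_{\cat{C},c_1,c_2})$ composed with $\mu_1^{-1}$ on the source and $\mu_1$ on the target, whereas the bottom route gives $\beta_{\cat{D},Fc_1,Fc_2}$ with the same structural maps on either side. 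This is exactly the hexagon-style identity that expresses that $(F,\mu_{-1},\mu_1)$ is braided.

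The equality then holds by the assumption that $F$ is a braided tensor functor as part of being a morphism in $\BTCA$ (Definition \ref{bfcdef}). The only subtlety to verify is that the $\beta^{-2}$ twist built into the braiding functor $B$ (Lemma \ref{bisbeta}) transforms correctly under $\underleftarrow{\underline{F}}$; but this follows because $F$ preserves the inclusion of $\cat{A}$ up to the monoidal natural isomorphism $\mu_0$, and thus intertwines the braiding monodromies between $\cat{A}$ and $\cat{C}$ with those between $\cat{A}$ and $\cat{D}$ (by naturality of $\mu_1$ and the compatibility of $\mu_0$ with $\mu_1$ on the braidings of $\cat{A}$). The main obstacle is bookkeeping: one must ensure that the various occurrences of the symmetry in $\cat{A}$, the braiding monodromies, and the structure isomorphisms $\mu_0, \mu_1$ line up correctly. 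This is best done by a string-diagram computation analogous to the one in Lemma \ref{assnattrafonatural}, where all the relevant substitutions are visible.
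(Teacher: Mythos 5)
Your proposal is correct and follows essentially the same route as the paper: the paper's proof simply observes that the crossed-braidedness condition of \cite[Definition \ref{ZXbraidedfunct}]{Wasserman2017b} boils down to the identity $\mu_1F(\beta_\cat{C})=\beta_\cat{D}\mu_1$, which holds because $F$ is a braided tensor functor. Your additional remarks on mates and on the compatibility of the $\beta^{-2}$ twist in $B$ with $\mu_0$ and $\mu_1$ are just a more explicit account of why the reduction to that identity is legitimate.
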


\begin{proof}	
	This boils down to checking $\mu_1F(\beta_\cat{C})=\beta_\cat{D}\mu_1$, which holds by virtue of $F$ being a braided functor.
\end{proof}

In summary, we have shown:

\begin{prop}
	The assignment $\underleftarrow{\underline{(-)}}$ defines a bifunctor
	$$
	\BTCA\rar \ZAXBT.
	$$
\end{prop}

\subsubsection{Enriching the commutant of $\cat{A}$}
Let $\underleftarrow{\underline{\cat{C}}}$ be obtained by the enriching procedure above. Recall (from \cite[Definition 16]{Wasserman2017b}) that its neutral subcategory $\underleftarrow{\underline{\cat{C}}}_\cat{A}$ is the subcategory for which the Yoneda embedding  $\underleftarrow{\underline{\cat{C}}}(-,c)$ factors through $\cat{A}\hookrightarrow\dcentcat{A}$. We will now give a characterisation of this subcategory in terms of the so-called braided commutant of $\cat{A}$ in $\cat{C}$.

\begin{df}\label{commutantdef}
	Let $\cat{C}$ be a braided tensor category with braiding $\beta$ and let $\cat{B}$ be a braided monoidal subcategory. Then the \emph{braided commutant of $\cat{B}$ in $\cat{C}$} is the full subcategory with objects
	$$
	\cat{Z}_2(\cat{B},\cat{C})=\{c\in \cat{C}|\beta_{c,b}\circ\beta_{b,c}=\id_{bc}\quad \forall b \in \cat{B}\}.
	$$
	When $\cat{B}=\cat{C}$, we will denote this subcategory by $\cat{Z}_2(\cat{C})$, and call it the \emph{M\"uger centre of $\cat{C}$}.
\end{df}

When $\cat{A}$ is a symmetric subcategory of $\cat{C}$ the commutant $\cat{Z}_2(\cat{A},\cat{C})$ contains $\cat{A}$. 

\begin{prop}\label{yonedacommutant}
	Denote by $\underleftarrow{\underline{\MC{A}{C}}}\subset\underleftarrow{\underline{\cat{C}}}$ the full subcategory on the objects of $\MC{A}{C}$. Then:
	$$
	\underleftarrow{\underline{\MC{A}{C}}}=\underleftarrow{\underline{\cat{C}}}_\cat{A}.
	$$
\end{prop}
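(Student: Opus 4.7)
The plan is to compare the half-braiding on hom-objects from Definition \ref{halfbraidequip} with the symmetric half-braiding characterising objects of $\cat{A}$ viewed inside $\dcentcat{A}$, and show the two agree on all hom-objects $\underleftarrow{\underline{\cat{C}}}(c',c)$ if and only if $c$ lies in the braided commutant $\MC{A}{C}$. Since both $\underleftarrow{\underline{\MC{A}{C}}}$ and $\cat{C}_\cat{A}$ are defined as full subcategories of $\underleftarrow{\underline{\cat{C}}}$, equality of the categories reduces to showing they have the same objects.

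First I would unpack Definition \ref{halfbraidequip}: the half-braiding $\mathfrak{b}_a$ on $\underleftarrow{\underline{\cat{C}}}(c',c)$ is the composite of the symmetry $s$ of $\cat{A}$ with postcomposition by the inverse braiding monodromy $\beta^{-1}_{a,c}\beta^{-1}_{c,a}\colon ac\rar ac$, transported through the canonical isomorphism $a\underleftarrow{\cat{C}}(c',c)\cong \underleftarrow{\cat{C}}(c',ac)$. Consequently, this half-braiding coincides with the one inherited from $\cat{A}\hookrightarrow \dcentcat{A}$ exactly when $(\beta^{-1}_{a,c}\beta^{-1}_{c,a})_*$ is the identity on $\underleftarrow{\cat{C}}(c',ac)$ for every $a\in\cat{A}$.

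For the easy direction, if $c\in \MC{A}{C}$ then by Definition \ref{commutantdef} the monodromy $\beta_{c,a}\beta_{a,c}$ is the identity on $ac$ for every $a\in\cat{A}$, hence postcomposition with its inverse is trivial and the half-braiding on every $\underleftarrow{\underline{\cat{C}}}(c',c)$ reduces to the symmetry $s$. This is precisely the half-braiding inherited from $\cat{A}\hookrightarrow\dcentcat{A}$, so $\underleftarrow{\underline{\cat{C}}}(-,c)$ factors through $\cat{A}$ and $c\in \cat{C}_\cat{A}$.

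For the converse, if $c\in \cat{C}_\cat{A}$, then $(\beta^{-1}_{a,c}\beta^{-1}_{c,a})_*$ acts as the identity on $\underleftarrow{\cat{C}}(c',ac)$ for every $c'\in\cat{C}$ and every $a\in\cat{A}$. Taking $c'=ac$ and tracking the identity morphism through the isomorphism $\cat{A}(\mathbb{I}_\cat{A}, \underleftarrow{\cat{C}}(ac,ac))\cong\cat{C}(ac,ac)$ from Equation \eqref{enrichedhomdef} yields $\beta^{-1}_{a,c}\beta^{-1}_{c,a}=\id_{ac}$, i.e. $c\in\MC{A}{C}$; this is essentially a one-object Yoneda argument, since the action of postcomposition on representable presheaves detects the underlying morphism. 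The only subtlety is to verify that no other piece of the composite in Equation \eqref{halfbraidingonhomobjects} contributes beyond the symmetry $s$ and the monodromy factor, but this is a direct reading off of the definition. Since the two subcategories coincide on objects and both are full, they are equal as $\dcentcat{A}$-enriched categories.
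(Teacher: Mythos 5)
Your proposal is correct and follows essentially the same route as the paper's proof: both directions reduce to comparing the half-braiding of Definition \ref{halfbraidequip} with the symmetry, with the forward inclusion immediate from triviality of the monodromy and the reverse inclusion obtained by a Yoneda argument showing that $(\beta^{-1}_{a,c}\beta^{-1}_{c,a})_*$ being the identity on all hom-objects forces $\beta^{-1}_{a,c}\beta^{-1}_{c,a}=\id_{ac}$. Your instantiation at $c'=ac$ and the identity morphism just makes explicit the Yoneda step that the paper invokes by name.
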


\begin{proof}
	As this is a statement about small full subcategories, it suffices to show that $\underleftarrow{\underline{\MC{A}{C}}}\subset\cat{C}_\cat{A}$ and $\underleftarrow{\underline{\MC{A}{C}}}\supset\cat{C}_\cat{A}$ at the level of objects. 
	
	The inclusion $\underleftarrow{\underline{\MC{A}{C}}}\subset\cat{C}_\cat{A}$ follows directly from the way the half-braidings on $\underleftarrow{\underline{\cat{C}}}(c,c')$ are defined in Definition \ref{halfbraidequip}: in Equation \eqref{halfbraidingonhomobjects} the morphism $(\beta_{a,c'}^{-1}\beta_{c',a}^{-1})_*$ is just the identity, so the composite becomes the symmetry in $\cat{A}$ between $\underleftarrow{\cat{C}}(c,c')$ and $a$. 
	
	For the reverse inclusion, suppose that $c$ is such that its Yoneda embedding $\underleftarrow{\underline{\cat{C}}}(-,c)$ factors through $\cat{A}$. This means that for each $c'\in \cat{C}$, the hom-object $\underleftarrow{\underline{\cat{C}}}(c',c)$ is $\underleftarrow{\cat{C}}(c',c)$ equipped with the symmetry in $\cat{A}$. Looking at the definition (Equation \eqref{halfbraidingonhomobjects}) of the half-braiding, we see that this implies that $(\beta_{a,c'}^{-1}\beta_{c',a}^{-1})_*$ is the identity on $\underleftarrow{\cat{C}}(c',c)$ for all $c'$. By the Yoneda lemma this means that $\beta_{a,c'}^{-1}\beta_{c',a}^{-1}$ is the identity on $ac$, which is what we wanted to show.
\end{proof}

Recall (see for example \cite[Section A.1.3]{Wasserman2017b}) that one can use lax monoidal functors between enriching categories to pass from categories enriched in one to categories enriched in the other. This is known as base-change and is done by applying the functors to the hom-objects. We observe the following which is immediate from the above proposition combined with the fact that the composite $\cat{A}\hookrightarrow \dcentcat{A}\xrightarrow{\Phi} \cat{A}$ of the forgetful functor with the inclusion functor is the identity on $\cat{A}$:

\begin{cor}\label{centralenrich}
	Let $\cat{C}$ be a braided tensor category containing $\cat{A}$, and assume that $\MC{A}{C}=\cat{C}$. Then:
	$$
	\overline{\underleftarrow{\underline{\cat{C}}}}=\underleftarrow{\cat{C}},
	$$
	where $\overline{ \cat{K}}$ for a $\dcentcat{A}_s$-enriched category $\cat{K}$ denotes base-change along the lax monoidal forgetful functor $\dcentcat{A}_s\rar \cat{A}$.
\end{cor}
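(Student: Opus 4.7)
The plan is to read off the statement directly from Proposition \ref{yonedacommutant} together with the fact, noted just before the corollary, that the composite $\cat{A}\hookrightarrow \dcentcat{A}\xrightarrow{\Phi}\cat{A}$ is the identity on $\cat{A}$. Under the hypothesis $\MC{A}{C}=\cat{C}$, Proposition \ref{yonedacommutant} tells us that $\underleftarrow{\underline{\cat{C}}}=\underleftarrow{\underline{\MC{A}{C}}}=\cat{C}_{\cat{A}}$, i.e.\ for every pair $c,c'\in\cat{C}$ the Yoneda embedding $\underleftarrow{\underline{\cat{C}}}(-,c')$ factors through the inclusion $\cat{A}\hookrightarrow\dcentcat{A}$. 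Concretely, this means that each hom-object $\underleftarrow{\underline{\cat{C}}}(c,c')=(\underleftarrow{\cat{C}}(c,c'),\mathfrak{b})$ has half-braiding $\mathfrak{b}$ equal to the symmetry in $\cat{A}$, because in Equation \eqref{halfbraidingonhomobjects} the factor $(\beta_{a,c'}^{-1}\beta_{c',a}^{-1})_*$ is the identity under the hypothesis.

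First I would verify this identification at the level of objects and hom-objects: the objects of $\overline{\underleftarrow{\underline{\cat{C}}}}$ and $\underleftarrow{\cat{C}}$ are the same (both are the objects of $\cat{C}$), and since $\Phi(\underleftarrow{\underline{\cat{C}}}(c,c'))=\underleftarrow{\cat{C}}(c,c')$ by definition of the enriched hom-object, the hom-objects agree in $\cat{A}$. Next I would check that the composition and identity morphisms match. The $\cat{A}$-enriched composition in $\overline{\underleftarrow{\underline{\cat{C}}}}$ is obtained from the $\dcentcat{A}_s$-composition of Definition \ref{compzadef} by precomposing with the lax monoidal structure on $\Phi$; but by construction the $\dcentcat{A}_s$-composition in \eqref{ZAcomp} is defined as (the restriction of) the $\cat{A}$-composition on $\underleftarrow{\cat{C}}$, so after applying $\Phi$ the two composition laws agree on the nose. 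For the unit, one uses that under the hypothesis the idempotent $\Pi_{(a,s)\otimes_c\mathbb{I}_s,c}$ of Proposition \ref{zatensoring} for $a=\mathbb{I}_\cat{A}$ becomes the identity, so the $\dcentcat{A}$-enriched identity morphism from Definition \ref{unitzadef} becomes, after change of basis, the ordinary identity of $\underleftarrow{\cat{C}}$ (which by Lemma \ref{unitobjidentity} is compatible with $\tau$).

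I expect the main obstacle to be purely book-keeping: one needs to check that changing basis along the (lax monoidal) forgetful functor $\Phi:\dcentcat{A}_s\rar\cat{A}$ does not introduce extra structure morphisms beyond those of $\underleftarrow{\cat{C}}$. Concretely, the lax structure of $\Phi$ involves the summation over simples which defines $\mathbb{I}_s$, and one must verify that the induced $\cat{A}$-enriched identity morphisms and associators on $\overline{\underleftarrow{\underline{\cat{C}}}}$ match those of $\underleftarrow{\cat{C}}$; this is essentially the content of Lemma \ref{unitobjidentity}, applied pointwise. Once these coherence checks are carried out, the equality $\overline{\underleftarrow{\underline{\cat{C}}}}=\underleftarrow{\cat{C}}$ follows, since the two $\cat{A}$-enriched categories agree on objects, hom-objects, composition and units.
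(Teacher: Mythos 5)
Your proposal follows the same route as the paper: the paper derives the corollary immediately from Proposition \ref{yonedacommutant} together with the observation that the composite $\cat{A}\hookrightarrow\dcentcat{A}\xrightarrow{\Phi}\cat{A}$ is the identity, which is exactly your opening step; the additional bookkeeping you carry out on composition and units is consistent with the constructions (the composition in Definition \ref{compzadef} is by design the restriction of the $\cat{A}$-enriched one, and the unit check is Lemma \ref{unitobjidentity}). The only small imprecision is that the idempotent $\Pi_{\mathbb{I}_s,c}$ does not literally become the identity — $\pi(\mathbb{I}_s,c)$ is a proper subobject of $\mathbb{I}_s\otimes c$ that is merely isomorphic to $c$ — but this does not affect the argument.
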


\section{De-Enriching}

We will now take a $\dcentcat{A}$-crossed braided tensor category and produce a braided tensor category. We will show that this construction defines an inverse to the enriching procedure done in the previous section. 

\subsection{The de-enriching 2-functor}
Recall that base-change along lax monoidal functors preserves monoidal categories, and, if the lax monoidal functor is braided, also braided monoidal categories. The basics of this are recalled in eg. \cite[Section A.1.3]{Wasserman2017b}. The lax monoidal functors we use come from instances of the following general statement:
\begin{lem}\label{deenrichlaxmon}
	Let $\cat{V}$ be a symmetric monoidal category, and let $\cat{Z}$ be a symmetric $\cat{V}$-monoidal $\cat{V}$-enriched category. That is, the monoidal structure $\otimes_\cat{Z}$ factors through the $\cat{V}$-enriched cartesian product and is symmetric with respect to the swap functor on $\cat{Z}\cattens{\cat{V}}\cat{Z}$ induced by the symmetry in $\cat{V}$. Denote the unit object of $\cat{Z}$ by $\mathbb{I}_\cat{Z}$. Then the functor 
	$$
	\cat{Z}(\mathbb{I}_\cat{Z},-)\colon \cat{Z}\rar \cat{V}
	$$
	is symmetric lax monoidal, with lax structure given by
	\begin{align*}
	\mu_0\colon \mathbb{I}_\cat{V}&\xrightarrow{\id_{\mathbb{I}_\cat{Z}}}\cat{Z}(\mathbb{I}_\cat{Z},\mathbb{I}_\cat{Z})\\
	\mu_{z,z'}\colon \cat{Z}(\mathbb{I}_\cat{Z},z)	\cat{Z}(\mathbb{I}_\cat{Z},z')&\xrightarrow{\otimes_\cat{Z}}	\cat{Z}(\mathbb{I}_\cat{Z}\mathbb{I}_\cat{Z},zz') \xrightarrow[\cong]{(\mathbb{I}_\cat{Z}\rar\mathbb{I}_\cat{Z}\mathbb{I}_\cat{Z})^*} 	\cat{Z}(\mathbb{I}_\cat{Z},zz'),
	\end{align*}
	for $z,z'\in \cat{Z}$.
\end{lem}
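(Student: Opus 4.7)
The plan is to verify the three coherence axioms (associativity, unit, and symmetry) of a symmetric lax monoidal functor directly from the enriched monoidal structure on $\cat{Z}$. Since $\otimes_\cat{Z}$ is a $\cat{V}$-enriched functor $\cat{Z}\cattens{\cat{V}}\cat{Z}\rar\cat{Z}$, it acts on hom-objects via morphisms
$$\cat{Z}(z_1,w_1)\otimes\cat{Z}(z_2,w_2)\rar \cat{Z}(z_1z_2,w_1w_2)$$
in $\cat{V}$, natural in all four variables. Specialising $z_1=z_2=\mathbb{I}_\cat{Z}$ and post-composing with the map induced by precomposition along $\mathbb{I}_\cat{Z}\rar \mathbb{I}_\cat{Z}\mathbb{I}_\cat{Z}$ yields exactly the proposed $\mu_{w_1,w_2}$; naturality in the $w_i$ is then immediate from naturality of $\otimes_\cat{Z}$ on hom-objects.

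For associativity, I would compare the two composites from $\cat{Z}(\mathbb{I},z_1)\otimes\cat{Z}(\mathbb{I},z_2)\otimes\cat{Z}(\mathbb{I},z_3)$ to $\cat{Z}(\mathbb{I},z_1z_2z_3)$ obtained by the two bracketings of $\mu$. Applying $\otimes_\cat{Z}$ twice lands in $\cat{Z}(\mathbb{I}^{\otimes 3},z_1z_2z_3)$ with either bracketing of $\mathbb{I}^{\otimes 3}$, and these are identified via the enriched associator of $\otimes_\cat{Z}$. The remaining discrepancy concerns how the canonical map $\mathbb{I}_\cat{Z}\rar \mathbb{I}_\cat{Z}^{\otimes 3}$ factors through the two bracketings; this is handled by the triangle and pentagon axioms for $\cat{Z}$. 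The two unit coherences reduce, via $\mu_0=\id$, to the left and right unitor triangles in $\cat{Z}$ evaluated at $\mathbb{I}_\cat{Z}$.

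For the symmetry axiom, the hypothesis gives a natural isomorphism $\beta\colon \otimes_\cat{Z}\Rar\otimes_\cat{Z}\circ S$ with $\beta^2=\id$, where $S$ is the enriched swap on $\cat{Z}\cattens{\cat{V}}\cat{Z}$ induced by the symmetry $\sigma$ of $\cat{V}$. Unwinding what $S$ does on hom-objects, the composite $\otimes_\cat{Z}\circ S$ on $\cat{Z}(z_1,w_1)\otimes\cat{Z}(z_2,w_2)$ is just $\sigma$ followed by $\otimes_\cat{Z}$, adjusted by pre- and post-composition with $\beta$. Specialising to $z_1=z_2=\mathbb{I}_\cat{Z}$, the component $\beta_{\mathbb{I},\mathbb{I}}$ is compatible with the unitor $\mathbb{I}_\cat{Z}\rar \mathbb{I}_\cat{Z}\mathbb{I}_\cat{Z}$ (a coherence consequence of the triangle axiom and $\beta^2=\id$), so the precomposition factor is absorbed; what remains is precisely the square asserting $\cat{Z}(\mathbb{I},\beta_{w_1,w_2})\circ\mu_{w_1,w_2}=\mu_{w_2,w_1}\circ\sigma$.

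The main obstacle is not any single axiom but the bookkeeping: making precise how the enriched associator, unitors, and braiding of $\otimes_\cat{Z}$ interact with precomposition along $\mathbb{I}_\cat{Z}\rar \mathbb{I}_\cat{Z}^{\otimes n}$. Once the standard interchange between enriched naturality and hom-functor pullbacks is set up, each coherence reduces to its counterpart inside $\cat{Z}$ itself, which holds by assumption.
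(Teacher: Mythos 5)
The paper omits the proof of this lemma entirely (``We omit the proof''), so there is no argument of record to compare against; your proposal supplies the standard verification, and it is correct. The structure maps, naturality, associativity via Mac Lane coherence for the maps $\mathbb{I}_\cat{Z}\rar\mathbb{I}_\cat{Z}^{\otimes n}$, the unit triangles, and the symmetry square via enriched naturality of the braiding together with $\beta_{\mathbb{I}_\cat{Z},\mathbb{I}_\cat{Z}}$ being absorbed by the unitor are exactly the right checklist. One small imprecision: $\otimes_\cat{Z}\circ S$ on hom-objects is simply $\sigma_\cat{V}$ followed by $\otimes_\cat{Z}$ in the swapped order, with no $\beta$-adjustment --- the braiding $\beta$ is the natural transformation \emph{comparing} $\otimes_\cat{Z}$ with $\otimes_\cat{Z}\circ S$, not part of the definition of $S$; and the identity $\beta_{\mathbb{I}_\cat{Z},\mathbb{I}_\cat{Z}}\circ(\mathbb{I}_\cat{Z}\rar\mathbb{I}_\cat{Z}\mathbb{I}_\cat{Z})=(\mathbb{I}_\cat{Z}\rar\mathbb{I}_\cat{Z}\mathbb{I}_\cat{Z})$ follows from the unitor--braiding compatibility alone and does not need $\beta^2=\id$. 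Neither point affects the validity of the argument.
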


We omit the proof. We will in particular use this lemma in the case where $\cat{V}=\Vect$ and $\cat{Z}=\cat{A}$ or $\cat{Z}=\dcentcat{A}$. Using this lemma, we set:

\begin{df}
	Let $\cat{K}$ be a $\dcentcat{A}_s$-enriched category. We will write $\DE(\cat{K})$ for the $\Vect$-enriched category obtained from $\cat{K}$ by change of basis along $\dcentcat{A}(\mathbb{I}_s,-)$, and will call this the \emph{de-enrichment of $\cat{K}$}.
\end{df}

To treat $\dcentcat{A}$-crossed (braided) tensor categories, we will additionally need a version of Lemma \ref{deenrichlaxmon} for 2-fold tensor categories. 

\begin{lem}
	Let $(\cat{Z},\otimes_1,\otimes_2)$ be a braided lax 2-fold tensor category \cite[Definitions 9, 11 and 12]{Wasserman2017a}, with, for $i=1,2$, the braiding for $\otimes_i$ denoted $\beta_i$ and the unit denoted by $\mathbb{I}_i$. Then the functor
	$$
	\cat{Z}(\mathbb{I}_2,-)\colon \cat{Z}\rar \Vect
	$$
	is braided lax monoidal with respect to $\otimes_1$, with lax structure given by
	\begin{align*}
	\mu_0\colon \C&\xrightarrow{\id_{\mathbb{I}_2}}\cat{Z}(\mathbb{I}_2,\mathbb{I}_2),\\
	\mu_{z,z'}\colon \cat{Z}(\mathbb{I}_2,z)	\cat{Z}(\mathbb{I}_2,z')&\xrightarrow{\otimes_\cat{Z}}	\cat{Z}(\mathbb{I}_2\otimes_1\mathbb{I}_2,z\otimes_1z') \xrightarrow[\cong]{(\mathbb{I}_2\rar\mathbb{I}_2\otimes_1\mathbb{I}_2)^*} 	\cat{Z}(\mathbb{I}_2,z\otimes_1z'),
	\end{align*}
	for $z,z'\in \cat{Z}$. Here the morphism $\mathbb{I}_2\rar\mathbb{I}_2\otimes_1\mathbb{I}_2$ is one of the structure morphisms from \cite[Definition 9]{Wasserman2017a}.
\end{lem}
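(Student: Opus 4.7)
The plan is to essentially mirror the (unstated) proof of Lemma \ref{deenrichlaxmon}, but with $\mathbb{I}_2$ playing the role formerly played by the unit of $\cat{Z}$ and $\otimes_1$ replacing the single tensor product. The crucial new ingredient is that $\mathbb{I}_2$ is no longer a unit for $\otimes_1$, so the unique structure morphism $\delta\colon \mathbb{I}_2\rar \mathbb{I}_2\otimes_1\mathbb{I}_2$ coming from the lax 2-fold structure (\cite[Definition \ref{DClax2fold}]{Wasserman2017a}) must do the work that the unit isomorphism $\mathbb{I}_\cat{Z}\rar \mathbb{I}_\cat{Z}\otimes\mathbb{I}_\cat{Z}$ did in Lemma \ref{deenrichlaxmon}.

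First I would check well-definedness of $\mu_{z,z'}$ and $\mu_0$, noting that $\delta^*$ is invertible precisely because $\delta$ is one of the structure isomorphisms between the two units (with the other direction being the structure map $\mathbb{I}_2\otimes_1\mathbb{I}_2\rar \mathbb{I}_2$ coming from $\mathbb{I}_2$ being a lax $\otimes_1$-algebra). Next I would verify the associativity pentagon for $\mu$: unwinding both paths, one sees that the discrepancy is measured by the two composite maps $\mathbb{I}_2\rar \mathbb{I}_2\otimes_1\mathbb{I}_2\otimes_1\mathbb{I}_2$ obtained by applying $\delta$ in the left or right factor. These agree because $\delta$ is coassociative, which is one of the coherence axioms built into the lax 2-fold tensor structure (equivalently, $\mathbb{I}_2$ is a coassociative coalgebra in $(\cat{Z},\otimes_1)$). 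The unit axiom reduces to the compatibility of $\delta$ with the counit-type map $\mathbb{I}_2\otimes_1\mathbb{I}_2\rar \mathbb{I}_2$, which again is a standing coherence axiom.

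For the braided compatibility, I need to show that the square
\begin{equation*}
\begin{tikzcd}
\cat{Z}(\mathbb{I}_2,z)\cat{Z}(\mathbb{I}_2,z') \arrow[r,"\mu_{z,z'}"] \arrow[d,"s"']& \cat{Z}(\mathbb{I}_2,z\otimes_1 z')\arrow[d,"(\beta_1)_*"]\\
\cat{Z}(\mathbb{I}_2,z')\cat{Z}(\mathbb{I}_2,z)\arrow[r,"\mu_{z',z}"'] & \cat{Z}(\mathbb{I}_2, z'\otimes_1 z)
\end{tikzcd}
\end{equation*}
commutes, where $s$ is the symmetry in $\Vect$. After unfolding, this reduces to the fact that $\beta_1$ applied to $\mathbb{I}_2\otimes_1\mathbb{I}_2$ is compatible with $\delta$, i.e., $\beta_{1,\mathbb{I}_2,\mathbb{I}_2}\circ\delta=\delta$. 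This is the requirement that $\mathbb{I}_2$ is a \emph{cocommutative} $\otimes_1$-coalgebra, which is exactly what braidedness of the 2-fold structure (\cite[Definition \ref{DCbraideddef}]{Wasserman2017a}) asserts on the level of the unit.

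The main obstacle is simply keeping track of the many structure morphisms from \cite[Definitions \ref{DClax2fold}, \ref{DCbraideddef} and \ref{DClaxtensordef}]{Wasserman2017a}: each axiom we need corresponds to a coherence in the 2-fold structure, and no genuine new work is required beyond careful diagram-chasing. Apart from these bookkeeping details, the proof is formally identical to that of the single-tensor-product case, so I would state the lemma and indicate that the proof proceeds analogously to Lemma \ref{deenrichlaxmon}, with the role of the unit isomorphism replaced throughout by the structure morphism $\delta$.
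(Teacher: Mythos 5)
Your proposal is exactly the argument the paper has in mind: the paper omits the proof, noting only that it is ``a routine adaptation of the proof of Lemma \ref{deenrichlaxmon}'', and your sketch spells out precisely that adaptation, with the structure morphism $\mathbb{I}_2\rar\mathbb{I}_2\otimes_1\mathbb{I}_2$ taking over the role of the unit isomorphism and its coassociativity and cocommutativity (coming from the coherence axioms of the braided lax 2-fold structure) supplying the associativity and braided-compatibility of the lax structure on $\cat{Z}(\mathbb{I}_2,-)$. This matches the paper's intended approach, so no further comparison is needed.
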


We omit the proof of this lemma, it is a routine adaptation of the proof of Lemma \ref{deenrichlaxmon}.

Using this Lemma, one can prove, with a slight adjustment of the original proof, the following variation of the statement \cite[Proposition A.27]{Wasserman2017b} that base-change preserves (braided) monoidal categories. The phrasing here is specialised to the case $\cat{Z}=\dcentcat{A}$ that we actually need.

\begin{prop}
	Let $\cat{K}$ be a $\dcentcat{A}$-crossed braided tensor category. Then the linear category obtained from $\cat{K}$ by change of basis along $\dcentcat{A}(\mathbb{I}_s,-)$ is a braided tensor category.
\end{prop}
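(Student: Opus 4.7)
The plan is to reduce this to the already-established fact (a minor variant of \cite[Proposition \ref{ZXmoncatpres}]{Wasserman2017b}) that change of basis along a braided lax monoidal functor sends braided monoidal enriched categories to braided monoidal enriched categories. The subtlety is that $\cat{K}$ involves \emph{two} tensor products on $\dcentcat{A}$ simultaneously: the enrichment and composition use $\otimes_s$, while the monoidal structure factors through $\otimes_c$. So we need the functor $\dcentcat{A}(\mathbb{I}_s,-)$ to be lax monoidal for $\otimes_s$ (to transfer the composition) and braided lax monoidal for $\otimes_c$ (to transfer the tensor and braiding) \emph{simultaneously}, in a compatible way.

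First, I would record that both lax structures are available: Lemma \ref{deenrichlaxmon}, applied to $\cat{V}=\Vect$ and $\cat{Z}=\dcentcat{A}_s$, shows that $\dcentcat{A}(\mathbb{I}_s,-)$ is a symmetric lax monoidal functor from $(\dcentcat{A},\otimes_s)$ to $\Vect$; this is what underlies the construction of the de-enriched composition and identity morphisms, exactly as in the proof of \cite[Proposition \ref{ZXmoncatpres}]{Wasserman2017b}. The lemma just stated furthermore provides a braided lax monoidal structure on the same functor with respect to $\otimes_c$, using that $(\dcentcat{A},\otimes_c,\otimes_s)$ is a braided lax $2$-fold tensor category by \cite{Wasserman2017a}. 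I would apply this second lax structure to the tensor product morphisms $\otimes_{\cat{K}}\colon \underline{\cat{K}}(k_1,k_1')\otimes_c \underline{\cat{K}}(k_2,k_2')\to \underline{\cat{K}}(k_1k_2,k_1'k_2')$ of the $\dcentcat{A}$-crossed tensor structure on $\cat{K}$ to obtain, for each pair of objects, linear maps
\[
\DE(\cat{K})(k_1,k_1')\otimes_{\C} \DE(\cat{K})(k_2,k_2') \longrightarrow \DE(\cat{K})(k_1k_2,k_1'k_2'),
\]
and similarly to transport the unit and the braiding components.

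Second, I would check the coherences. Associativity and unitality of the tensor product on $\DE(\cat{K})$ follow from the corresponding axioms in $\cat{K}$ by the same diagrammatic argument as in \cite[Proposition \ref{ZXmoncatpres}]{Wasserman2017b}, once one replaces the enriched cartesian product by the $\otimes_c$-convolution product in every square. The crucial interchange law between composition and tensor in $\DE(\cat{K})$ comes from the $\dcentcat{A}$-crossed interchange in $\cat{K}$ combined with the lax compatibility between $\otimes_c$ and $\otimes_s$ on $\dcentcat{A}$ from \cite{Wasserman2017a}: indeed, composing $\otimes_{\cat{K}}$ with itself via $\otimes_s$ and then applying $\dcentcat{A}(\mathbb{I}_s,-)$ lands in the same space, on two different routes, as composing two de-enriched tensor products; these two routes are identified precisely by the coherence between $\otimes_c$-lax and $\otimes_s$-lax structures on $\dcentcat{A}(\mathbb{I}_s,-)$.

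Finally, naturality and the hexagon axioms for the induced braiding on $\DE(\cat{K})$ follow from the $\dcentcat{A}$-crossed braiding axioms on $\cat{K}$ combined with the \emph{braided} part of the $\otimes_c$-lax monoidal structure on $\dcentcat{A}(\mathbb{I}_s,-)$. The main obstacle I anticipate is bookkeeping: verifying that the two lax structures on $\dcentcat{A}(\mathbb{I}_s,-)$ interact with the $\dcentcat{A}$-crossed interchange so as to yield a strict interchange in $\DE(\cat{K})$. This is essentially a matter of tracing through the definitions in \cite[Section \ref{ZXzacrossedtensordef}]{Wasserman2017b} and using that the map $\mathbb{I}_s \to \mathbb{I}_s \otimes_c \mathbb{I}_s$ employed in the definition of $\mu_{z,z'}$ is compatible with composition; given the groundwork laid by \cite{Wasserman2017a,Wasserman2017b}, no genuinely new calculation should be required and the result is really a corollary of \cite[Proposition \ref{ZXmoncatpres}]{Wasserman2017b} applied to the $\otimes_c$-lax structure.
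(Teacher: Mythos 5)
Your proposal is correct and follows essentially the same route as the paper: the paper likewise derives this proposition from the lemma that $\dcentcat{A}(\mathbb{I}_s,-)$ is braided lax monoidal with respect to $\otimes_c$ (via the braided lax 2-fold tensor structure on $\dcentcat{A}$), combined with a slight adjustment of the change-of-basis argument for braided monoidal enriched categories. The compatibility bookkeeping between the $\otimes_s$-lax and $\otimes_c$-lax structures that you flag as the main obstacle is exactly the ``slight adjustment of the original proof'' the paper alludes to.
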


Being additionally tensored over $\dcentcat{A}_s$ will ensure the resulting category is tensored over $\Vect$, by standard arguments. We additionally note that, by definition, the de-enrichment of a $\dcentcat{A}_s$-linear category \cite[Definition 4]{Wasserman2017b} is a linear category, and the de-enrichment is semi-simple if the $\dcentcat{A}_s$-enriched category is. We have the following immediate corollary of this proposition and the preceding lemmas.

\begin{cor}
	The de-enrichment of any $\dcentcat{A}_s$-fusion category \cite[Definition 36]{Wasserman2017b} is a fusion category.
\end{cor}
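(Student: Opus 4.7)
The plan is to verify that the de-enrichment $\DE(\cat{K})$ of a $\dcentcat{A}_s$-fusion category $\cat{K}$ satisfies each of the axioms in the paper's definition of a fusion category, by collecting the consequences of the statements immediately preceding the corollary and then checking the remaining axioms by hand. The preceding Proposition already yields that $\DE(\cat{K})$ is a braided tensor category. The two remarks just before the corollary give, respectively, that $\DE(\cat{K})$ is linear (since change of basis along a lax monoidal functor takes $\dcentcat{A}_s$-linear categories to $\Vect$-linear categories), tensored over $\Vect$ (since tensoring over $\dcentcat{A}_s$ restricts along $\Vect\rar\dcentcat{A}_s$ to a $\Vect$-tensoring), and semi-simple whenever $\cat{K}$ is. Consequently the remaining fusion-category axioms to verify are idempotent completeness, simplicity of the unit, finitely many isomorphism classes of simple objects, and rigidity.

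For idempotent completeness I would argue that an idempotent in $\DE(\cat{K})$ is exactly a degree-$\mathbb{I}_s$ idempotent in $\cat{K}(c,c)$, and the splitting provided by the $\dcentcat{A}_s$-idempotent-completeness of $\cat{K}$ descends to a splitting in $\DE(\cat{K})$ by Proposition \ref{zatensoring}-style reasoning, i.e.\ the splitting object represents the expected sub-hom-object. For the count of simples and simplicity of the unit: by definition of a $\dcentcat{A}_s$-fusion category in \cite{Wasserman2017b}, $c\in\cat{K}$ is simple when $\cat{K}(c,c)\cong \mathbb{I}_s$ in $\dcentcat{A}$, and there are only finitely many isomorphism classes of such $c$. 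Applying $\dcentcat{A}(\mathbb{I}_s,-)$ gives $\DE(\cat{K})(c,c)=\dcentcat{A}(\mathbb{I}_s,\mathbb{I}_s)\cong\C$, so the simple objects of $\cat{K}$ are still simple in $\DE(\cat{K})$; conversely any object simple in $\DE(\cat{K})$ decomposes in $\cat{K}$ into simples, hence is simple there, so the bijection of simple objects is immediate and the unit stays simple.

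For rigidity I would take, for each $c\in\cat{K}$, a $\dcentcat{A}_s$-enriched dual $c^*$ together with the evaluation $\ev_c\colon c^*\otimes c\rar_{\mathbb{I}_s}\mathbb{I}_\cat{K}$ and coevaluation $\coev_c\colon \mathbb{I}_\cat{K}\rar_{\mathbb{I}_s} c\otimes c^*$ as morphisms of degree $\mathbb{I}_s$, hence as honest morphisms in $\DE(\cat{K})$. The zigzag identities, being equations in the $\Vect$-subcategory $\DE(\cat{K})\subset \cat{K}$ cut out by the degree-$\mathbb{I}_s$ morphisms, hold in $\DE(\cat{K})$ because they already hold in $\cat{K}$. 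This exhibits $c^*$ as a dual to $c$ in $\DE(\cat{K})$.

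The only genuinely delicate point I anticipate is the translation between ``simple'' in the $\dcentcat{A}_s$-enriched sense used in \cite{Wasserman2017b} and ``simple'' after de-enriching, i.e.\ making sure $\dcentcat{A}_s(\mathbb{I}_s,\mathbb{I}_s)\cong \C$ and that Schur's lemma passes cleanly through the functor $\dcentcat{A}(\mathbb{I}_s,-)$; this should be immediate from the description of $\mathbb{I}_s$ as $\bigoplus_{i\in\SA}i^\ast i$ with its half-braiding, but this is the step that warrants an explicit check, while the monoidal, braided, linear, and rigid structure are essentially packaged by the preceding results.
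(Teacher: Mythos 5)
Your overall strategy---take the braided tensor structure from the preceding proposition, linearity, semi-simplicity and the $\Vect$-tensoring from the surrounding remarks, and then verify the remaining fusion axioms against the companion paper's definition of a $\dcentcat{A}_s$-fusion category---is exactly the route the paper takes; the paper states no argument beyond declaring the corollary immediate from those results, so most of your write-up is a legitimate filling-in of details. However, the one step you yourself flag as delicate is wrong as written. The object $\mathbb{I}_s$, with underlying object $\oplus_{i\in\SA}i^*i$, is \emph{not} simple in $\dcentcat{A}$: for $\cat{A}=\Rep(G)$, under the identification of $(\dcentcat{A},\otimes_s)$ with $G$-equivariant vector bundles on $G$ with the fibrewise tensor product, $\mathbb{I}_s$ is the structure sheaf and decomposes as the sum over conjugacy classes $[g]$ of the simple object supported on $[g]$ with fibre the trivial representation of the centraliser. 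Hence $\dcentcat{A}(\mathbb{I}_s,\mathbb{I}_s)\cong\C^{c(G)}$, with $c(G)$ the number of conjugacy classes, not $\C$. Consequently ``$\cat{K}(c,c)\cong\mathbb{I}_s$'' cannot be the enriched notion of simplicity (it would force every such object to acquire a $c(G)$-dimensional endomorphism algebra after de-enriching), and your computation $\DE(\cat{K})(c,c)=\dcentcat{A}(\mathbb{I}_s,\mathbb{I}_s)\cong\C$ fails; with it fail your arguments for the simplicity of the unit and for the bijection on simples.

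The correct mechanism is the natural isomorphism $\dcentcat{A}(\mathbb{I}_s,-)\cong\cat{A}(\mathbb{I}_\cat{A},\Phi(-))$, which is precisely what the paper invokes in the proof of Theorem \ref{bfcazaxbtev} to identify $\DE(\underleftarrow{\underline{\cat{C}}})(c,c')$ with $\cat{C}(c,c')$. Simplicity of $c$ in $\DE(\cat{K})$ is one-dimensionality of $\cat{A}(\mathbb{I}_\cat{A},\Phi(\cat{K}(c,c)))$, and this is the form in which Schur's lemma and the finiteness of simples should be transported; note for instance that in $\cat{A}_\cat{Z}=\underleftarrow{\underline{\cat{A}}}$ the endomorphism object of the unit is $\mathbb{I}_\cat{A}$ (the $\otimes_c$-unit, carrying the trivial half-braiding), not $\mathbb{I}_s$, and it is the isomorphism above that makes its de-enriched endomorphism algebra $\C$. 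Once simplicity is phrased this way, your remaining checks (idempotent splitting via the $\dcentcat{A}_s$-tensoring, rigidity via degree-$\mathbb{I}_s$ evaluation and coevaluation inherited from the enriched duals) are consistent with what the cited definition is designed to supply.
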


It turns out that whenever we start with a $\dcentcat{A}$-crossed braided tensor category, the compatibility between the $\dcentcat{A}_s$-tensoring, the crossed tensor structure and the braiding will ensure the following:

\begin{prop}
	Let $\cat{K}$ be a $\dcentcat{A}$-crossed braided tensor category. Then the image of the unit functor $\mathbb{I}:\cat{A}_\cat{Z}\rar \cat{K}$ under change of basis along $\dcentcat{A}(\mathbb{I}_s,-)$ is a braided monoidal faithful functor from $\cat{A}$ into $\DE(\cat{K})$.
\end{prop}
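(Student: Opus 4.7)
The plan is to assemble the statement from the change-of-basis machinery already developed, treating the claim as the composition of three identifications: that $\DE(\cat{A}_\cat{Z}) \cong \cat{A}$ as braided tensor categories, that change of basis along $\dcentcat{A}(\mathbb{I}_s,-)$ sends the $\dcentcat{A}$-crossed tensor functor $\mathbb{I}$ to a braided tensor functor, and that the resulting functor is faithful.

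First I would identify $\DE(\cat{A}_\cat{Z})$ with $\cat{A}$. By construction, $\cat{A}_\cat{Z}$ has the objects of $\cat{A}$ as objects, and its hom-objects are the internal homs in $\dcentcat{A}$ between $a$ and $a'$ viewed with their symmetric half-braidings. Applying $\dcentcat{A}(\mathbb{I}_s,-)$ to such a hom-object recovers $\cat{A}(a,a')$ by the defining property of $\mathbb{I}_s$ as the unit for $\otimes_s$ together with the fact that the half-braidings in sight are just the symmetry. The $\cattens{c}$-monoidal structure on $\cat{A}_\cat{Z}$ comes from $\otimes_\cat{A}$ on objects, and its $\dcentcat{A}$-crossed braiding reduces to the symmetry on $\cat{A}$; under de-enrichment these transport to the symmetric monoidal structure on $\cat{A}$.

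Next I would apply the preceding proposition, which says that $\DE(\cat{K})$ is a braided tensor category, to both source and target. Since $\dcentcat{A}(\mathbb{I}_s,-)$ is symmetric lax monoidal with respect to $\otimes_c$ (as in the braided lax 2-fold version of Lemma \ref{deenrichlaxmon}), change of basis sends any $\dcentcat{A}$-crossed braided tensor functor to a braided tensor functor of the de-enrichments. Combined with Lemma \ref{assocZAXTfunc} and the accompanying braided statement, applied to the unit functor $\mathbb{I}\colon \cat{A}_\cat{Z}\rar \cat{K}$, this yields a braided tensor functor $\DE(\mathbb{I})\colon \cat{A} \to \DE(\cat{K})$. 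Faithfulness is then automatic: $\cat{A}$ is semisimple, and as noted after Definition \ref{bfcdef}, every tensor functor on a semisimple fusion category is faithful.

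The main obstacle is the check underlying the first step, namely that the $\dcentcat{A}$-crossed braiding on $\cat{K}$ restricts, on the image of $\mathbb{I}$, to the symmetric braiding inherited from $\cat{A}$. Concretely, one has to verify that the braiding functor $B$ from \cite[Definition \ref{ZXbraidingfunctor}]{Wasserman2017b} acts on the $\cat{A}_\cat{Z}$-sector of hom-objects as the plain switch, because the relevant half-braidings are symmetries. Once this is in place, the $\dcentcat{A}$-crossed braiding on $\mathbb{I}(a)\otimes \mathbb{I}(a')$ is just the image under $\mathbb{I}$ of the symmetry in $\cat{A}$, and passing through $\dcentcat{A}(\mathbb{I}_s,-)$ gives the required compatibility of $\DE(\mathbb{I})$ with the braidings.
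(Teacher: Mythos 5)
Your overall skeleton (identify $\DE(\cat{A}_\cat{Z})$ with $\cat{A}$, push the unit functor through the change of basis, note faithfulness is automatic for a linear monoidal functor out of a fusion category) matches the paper's, and the faithfulness step is handled the same way. But the two verifications that carry the actual content of the proposition are missing. First, monoidality of $\mathbb{I}\colon\cat{A}_\cat{Z}\rar\cat{K}$: you try to import this from Lemma \ref{assocZAXTfunc}, but that lemma concerns the associated functor $\underleftarrow{\underline{F}}$ of a morphism in $\BTCA$, whereas here $\cat{K}$ is an \emph{arbitrary} $\dcentcat{A}$-crossed braided tensor category, not assumed to be of the form $\underleftarrow{\underline{\cat{C}}}$, so the lemma does not apply. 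The paper instead derives monoidality directly from the structural identities: $\mathbb{I}(a)\otimes_\cat{K}k = a\cdot k$ (the defining compatibility of the unit functor with the $\dcentcat{A}_s$-tensoring) together with $(a\otimes_\cat{A}a')\cdot k = a\cdot(a'\cdot k)$, evaluated at $k=\mathbb{I}(\mathbb{I}_\cat{A})$, giving $\mathbb{I}(a\otimes_\cat{A}a')\cong\mathbb{I}(a)\otimes_\cat{K}\mathbb{I}(a')$. Nothing in your write-up supplies this.

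Second, braidedness. You correctly flag this as the main obstacle, but the check you propose — that the braiding functor $B$ acts as the plain switch on the $\cat{A}_\cat{Z}$-sector because the half-braidings there are symmetries — only concerns the source category; it does not explain why the crossed braiding of $\cat{K}$ on $\mathbb{I}(a)\otimes_\cat{K}\mathbb{I}(a')$ agrees with the image of the symmetry of $\cat{A}$, which is the actual assertion. The paper closes this gap with the Joyal--Street argument (\cite[Proposition 1]{Joyal1986}) on compatibility between unitors and braidings: since $\mathbb{I}(a)\otimes_\cat{K}-$ is (a twist of) the tensoring, i.e.\ behaves like a unitor, the braiding on such objects is forced. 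Without some version of that argument your "once this is in place" step is an unproved claim, so the proof as written has a genuine gap at its central point.
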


\begin{proof}
	Denote the crossed monoidal structure on $\cat{K}$ by $\otimes_\cat{K}$, the $\dcentcat{A}_s$ tensoring by $\cdot$, and the crossed monoidal structure on $\cat{A}_\cat{Z}$ by $\otimes_\cat{A}$. By definition, the unit functor is such that $\mathbb{I}(a)\otimes_\cat{K} k= a\cdot k$, and the tensoring satisfies $(a\otimes_\cat{A} a' )\cdot k= a \cdot (a'\cdot k)$, so, taking $k=\mathbb{I}(\mathbb{I}_\cat{A})$, we have that $\mathbb{I}(a\otimes_\cat{A} a')=\mathbb{I}(a)\otimes_{\cat{K}}\mathbb{I}(a')$. The usual argument for compatibility between the unitors and the braiding \cite[Proposition 1]{Joyal1986} implies that the unit functor is also braided. This means that the image of $\mathbb{I}$ under de-enrichment,
	$$
	\DE(\mathbb{I}):\DE(\cat{A}_\cat{Z})=\cat{A}\rar \DE(\cat{K}),
	$$
	is a braided linear functor. Any linear monoidal functor on a fusion category is faithful, monoidality forces non-zero objects to be send to non-zero objects, so this finishes the proof.
\end{proof}

\subsection{Equivalence between braided categories containing $\cat{A}$ and $\dcentcat{A}$-crossed braided categories}
The goal of this section is to show that $\underleftarrow{\underline{(-)}}$ outlined in Section \ref{enrichingsection} is an equivalence of 2-categories between the 2-category $\BTCA$ of braided tensor categories (Definition \ref{bfcdef}) containing $\cat{A}$ and $\ZAXBT$ (see Definition \ref{zaxbtdef}), with inverse given by $\DE(-)$. 

\begin{thm}\label{bfcazaxbtev}
	The bifunctors $\DE(-)$ and $\underleftarrow{\underline{(-)}}$ are mutually inverse.
\end{thm}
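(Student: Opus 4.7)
The plan is to construct natural equivalences $\eta_\cat{C}\colon \cat{C} \xrightarrow{\simeq} \DE(\underleftarrow{\underline{\cat{C}}})$ and $\epsilon_\cat{K}\colon \underleftarrow{\underline{\DE(\cat{K})}} \xrightarrow{\simeq} \cat{K}$, and then verify they respect all of the $\BTCA$-structure and $\ZAXBT$-structure, respectively, as well as 2-cells. For $\eta_\cat{C}$, the two categories share the same objects, and the chain
$$
\DE(\underleftarrow{\underline{\cat{C}}})(c, c') = \dcentcat{A}(\mathbb{I}_s, \underleftarrow{\underline{\cat{C}}}(c, c')) \cong \cat{C}(\pi(\mathbb{I}_s, c), c') \cong \cat{C}(c, c')
$$
identifies hom-spaces, using Proposition \ref{zatensoring} with $z = \mathbb{I}_s$ followed by Lemma \ref{unitobjidentity}. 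Compatibility with composition, tensor product, braiding, and the inclusion of $\cat{A}$ reduces to tracing through Definitions \ref{compzadef}, \ref{inducedatensor}, and \ref{halfbraidequip} together with Proposition \ref{cunllbraid} and Lemma \ref{unitforZAenrich}: at each step the structure on $\underleftarrow{\underline{\cat{C}}}$ is constructed directly out of the corresponding structure on $\cat{C}$, so de-enriching and evaluating at $\mathbb{I}_s$ gives back the original structure.

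For $\epsilon_\cat{K}$, the objects again coincide. Writing $a \cdot k$ for the action of $a \in \cat{A}$ on $k \in \DE(\cat{K})$ induced by the unit functor, the defining adjunction of the $\cat{A}$-enrichment combined with the $\dcentcat{A}_s$-tensoring on $\cat{K}$ (the analogue of Proposition \ref{zatensoring} applied internally in $\cat{K}$) yields
$$
\cat{A}(a, \underleftarrow{\DE(\cat{K})}(k, k')) = \DE(\cat{K})(a \cdot k, k') = \dcentcat{A}(\mathbb{I}_s, \cat{K}(a \cdot k, k')) \cong \dcentcat{A}((a,s), \cat{K}(k, k')) \cong \cat{A}(a, \Phi(\cat{K}(k, k'))).
$$
The Yoneda lemma then gives a canonical isomorphism $\underleftarrow{\DE(\cat{K})}(k, k') \cong \Phi(\cat{K}(k, k'))$, and the half-braiding constructed in Definition \ref{halfbraidequip} from the braiding monodromy of $\DE(\cat{K})$ will match, by a direct string-diagram computation using the $\dcentcat{A}$-crossed braided structure of $\cat{K}$, the intrinsic half-braiding on $\cat{K}(k, k')$. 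This upgrades the isomorphism above to $\underleftarrow{\underline{\DE(\cat{K})}}(k, k') \cong \cat{K}(k, k')$ in $\dcentcat{A}$.

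The main obstacle is to verify that $\epsilon_\cat{K}$ intertwines every layer of structure on $\ZAXBT$: the composition, the identities, the $\dcentcat{A}_s$-tensoring, the $\dcentcat{A}$-crossed tensor product, and the $\dcentcat{A}$-crossed braiding. Each of these structural pieces on $\underleftarrow{\underline{\DE(\cat{K})}}$ is, by construction in Section \ref{enrichingsection}, assembled from the corresponding piece on $\DE(\cat{K})$, which in turn is the de-enrichment of the original structure on $\cat{K}$. The verification therefore reduces to showing that the two passages cancel on each kind of structure morphism; this follows from the hom-level Yoneda identifications above plus the fact that morphisms of the form $\mathbb{I}_s \to \cat{K}(k, k')$ generate everything via the $\dcentcat{A}_s$-tensoring. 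I expect this bookkeeping to be the most delicate part of the proof, because the crossed tensor structure uses $\otimes_c$ while the enrichment uses $\otimes_s$, and the two must be threaded correctly through each naturality square.

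Finally, 2-functoriality and the fact that $\eta$, $\epsilon$ are 2-natural follow from applying the same identifications to the associated functors and natural transformations of Lemmas \ref{assocZAfunct} and \ref{assocZAnattrafo}: on morphisms in $\BTCA$ their de-enrichments recover the original functors (and on 2-morphisms the original natural transformations) by the cancellation principle above, and conversely on the $\ZAXBT$ side.
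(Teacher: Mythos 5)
Your proposal is correct and follows essentially the same route as the paper: both composites are the identity on objects, the unit is obtained from $\dcentcat{A}(\mathbb{I}_s,\underleftarrow{\underline{\cat{C}}}(c,c'))\cong\cat{C}(c,c')$, and the counit from the adjunction chain relating $\DE(\cat{K})(z\cdot k,k')$ to $\dcentcat{A}(z,\cat{K}(k,k'))$ via the $\dcentcat{A}_s$-tensoring. The only minor difference is that you recover the counit in two stages (underlying $\cat{A}$-object via probing with $a\in\cat{A}$, then a separate half-braiding check), whereas the paper probes with arbitrary $z\in\dcentcat{A}$ and gets the identification in $\dcentcat{A}$ in one step by enriched Yoneda.
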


\begin{proof}
	At the level of the objects of the categories that are the objects of the 2-categories, both $\DE(-)$ and $\underleftarrow{\underline{(-)}}$ are constructions that leave the objects of the categories invariant. So their composites also leave the objects invariant. This means that the components of the natural transformations witnessing that these bifunctors are mutually inverse are be functors that are the identity on objects. In order to prove the theorem, we in particular need to show that these component functors are equivalences, and for this it is then enough to show they induce isomorphisms on the hom-objects.
	
	Consider first the composite $\DE(-)\circ\underleftarrow{\underline{(-)}}$. Let $\cat{C}$ be an object of $\BTCA$, then the category $\DE(\underleftarrow{\underline{\cat{C}}})$ has hom-spaces:
	$$
	\dcentcat{A}(\mathbb{I}_s,\underleftarrow{\underline{\cat{C}}}(c,c'))\cong \cat{A}(\mathbb{I}_\cat{A},\underleftarrow{\cat{C}}(c,c'))\cong \cat{C}(c,c'),
	$$
	where we have used that $\dcentcat{A}(\mathbb{I}_s,-)\cong\cat{A}(\mathbb{I}_\cat{A},-)\circ\mathbf{Forget}$ (see \cite[Lemma 13]{Wasserman2017b}), and the definitions of the hom-objects of $\underleftarrow{\underline{\cat{C}}}$ and $\underleftarrow{\cat{C}}$. Taking $\mathbf{H}_\cat{C}\colon\DE(\underleftarrow{\underline{\cat{C}}})\rar\cat{C}$ to be the functor that is the identity on objects and the above isomorphism on hom-objects gives us an equivalence $\DE(\underleftarrow{\underline{\cat{C}}})\cong \cat{C}$ for each $\cat{C}\in \BTCA$. To see that these functors combine to a natural transformation, we simply observe that the above isomorphism is the inverse to the isomorphism used to define the action of the functor $\underleftarrow{\underline{F}}$ on the hom-objects. In particular, naturality is satisfied on the nose, there is no need for a ``naturator'' natural isomorphism between the linear functors $F\mathbf{H}_\cat{C}$ and $\mathbf{H}_\cat{D}\DE(\underleftarrow{\underline{F}})$.

	For the other composite, we observe that, for $\cat{K}\in \ZAXBT$, the underlying objects in $\cat{A}$ of the hom-objects of $\underleftarrow{\underline{\DE({ \cat{K}})}}$ are characterised by:
	\begin{align}\begin{split}\label{enrichafterde}
		\dcentcat{A}(z,\underleftarrow{\underline{\DE({ \cat{K}})}}(k,k'))&\cong \DE({\cat{K}})(z\cdot k,k')\\
	&= \dcentcat{A}(\mathbb{I}_s,\cat{K}(z\cdot k,k'))\\
	&\cong \dcentcat{A}(z,\cat{K}(k,k')),
	\end{split}
	\end{align}
	where the first isomorphism is the $\dcentcat{A}$-tensoring (Proposition \ref{zatensoring}) of $\underleftarrow{\underline{\DE({\cat{K}})}}$, the second the definition of $\DE$, the third is the $\dcentcat{A}$-tensoring of $\cat{K}$. These isomorphisms are all natural, so will combine to an equivalence $\mathbf{J}_\cat{K}$ between $\underleftarrow{\underline{\DE({ \cat{K}})}}$ and $\cat{K}$, which is the identity on objects. Similarly to before, the functors induced by de-enrichment are defined using the isomorphisms above, so naturality of the natural transformation defined by the $\mathbf{J}_\cat{K}$ is automatic. 
\end{proof}

\section{The Reduced Tensor Product}
We have now gathered all we need to define the reduced tensor product of braided tensor categories containing $\cat{A}$, by pushing $\cattens{s}$ along the equivalence from Theorem \ref{bfcazaxbtev}. After giving this definition, we will establish some basic properties that this product has.

\subsection{Definition of the reduced tensor product}

\begin{df}
	The \emph{reduced tensor product} $\rt$ on the category $\BFCA$ (see Definition \ref{bfcdef}) is defined by
	\begin{center}
		\begin{tikzcd}[column sep= large]
		\BFCA\times 	\BFCA \arrow[d,"\rt"]\arrow[r,"\underleftarrow{\underline{(-)}}\times\underleftarrow{\underline{(-)}}"] &\ZAXBT \times  \ZAXBT  \arrow[d,"\cattens{s}"]\\
		 \BFCA&\ZAXBT  \arrow[l,"\DE"], 
		\end{tikzcd}
	\end{center}
	where $\cattens{s}$ was defined in Definition \ref{ZAprodsdef}. 
\end{df}

To make $\BTCA$ into a symmetric monoidal 2-category for this reduced tensor product, we need to specify associators. We will do this by using the associators $\tilde{\alpha}$ for $\cattens{s}$, which in turn are induced from the associators $\alpha^s$ for $\otimes_s$. 

\begin{df}
	Let $\cat{C},\cat{D},\cat{E}\in \BTCA$. Then the \emph{associator for $\cat{C},\cat{D}$ and $\cat{E}$} is the functor
	$$
	A_{\cat{C},\cat{D},\cat{E}}\colon (\cat{C}\rt \cat{D})\rt\cat{E} \rar \cat{C}\rt (\cat{D}\rt \cat{E}),
	$$
	given by the composite
	\begin{align*}
	 (\cat{C}\rt \cat{D})\rt\cat{E} 	&= \DE((\underleftarrow{\underline{\cat{C}\rt \cat{D}}})\cattens{s}\underleftarrow{\underline{\cat{E}}})\\
	 									&= \DE(\underleftarrow{\underline{\DE(\underleftarrow{\underline{\cat{C}}}\cattens{s}\underleftarrow{\underline{\cat{D}}})}}\cattens{s}\underleftarrow{\underline{\cat{E}}})\\
	 									&\xrightarrow{\mathbf{J}_{\underleftarrow{\underline{\cat{C}}}\cattens{s}\underleftarrow{\underline{\cat{D}}}}} \DE((\underleftarrow{\underline{\cat{C}}}\cattens{s}\underleftarrow{\underline{\cat{D}}})\cattens{s}\underleftarrow{\underline{\cat{E}}})\\
	 									&\xrightarrow{\tilde{\alpha}}\DE(\underleftarrow{\underline{\cat{C}}}\cattens{s}(\underleftarrow{\underline{\cat{D}}}\cattens{s}\underleftarrow{\underline{\cat{E}}}))\\
	 									& \xrightarrow{\mathbf{J}^{-1}_{\underleftarrow{\underline{\cat{D}}}\cattens{s}\underleftarrow{\underline{\cat{E}}}}}\DE(\underleftarrow{\underline{\cat{C}}}\cattens{s}\underleftarrow{\underline{\DE(\underleftarrow{\underline{\cat{D}}}\cattens{s}\underleftarrow{\underline{\cat{E}}})}})\\
	 									&=  \cat{C}\rt (\cat{D}\rt\cat{E}). 
	\end{align*}
\end{df}

These associators need to satisfy the pentagon equations, possibly up to a invertible 2-cell. Fortunately, we can take this this 2-cell to be the identity, as we will show below. To simplify the argument, we first prove the following lemma:

\begin{lem}\label{DEaltdesc}
	Let $\cat{C}\in \BTCA$. Then
	$$
	\underleftarrow{\underline{\dcentcat{A}}}(\mathbb{I}_s, \underleftarrow{\underline{\cat{C}}}(c,c'))\cong \underleftarrow{\underline{\DE(\underleftarrow{\underline{\cat{C}}})}}(c,c'),
	$$
	canonically.
\end{lem}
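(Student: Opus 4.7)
The plan is to identify both sides canonically with the hom-object $\underleftarrow{\underline{\cat{C}}}(c,c')$ itself, viewed as an object of $\dcentcat{A}$, and then compose the two identifications.

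For the right-hand side, the relevant input is Theorem \ref{bfcazaxbtev}. The component $\mathbf{J}_{\underleftarrow{\underline{\cat{C}}}}$ of the natural equivalence witnessing that $\underleftarrow{\underline{(-)}}\circ \DE(-)$ is the identity is an identity-on-objects equivalence $\underleftarrow{\underline{\DE(\underleftarrow{\underline{\cat{C}}})}}\cong \underleftarrow{\underline{\cat{C}}}$, so in particular it provides the canonical isomorphism $\underleftarrow{\underline{\DE(\underleftarrow{\underline{\cat{C}}})}}(c,c') \cong \underleftarrow{\underline{\cat{C}}}(c,c')$ on hom-objects.

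For the left-hand side I would apply the enriching construction to $\dcentcat{A}$ itself, which lies in $\BTCA$ via the braided inclusion $a\mapsto (a,s)$. By Proposition \ref{zatensoring}, for all $z,w\in\dcentcat{A}$,
$$
\dcentcat{A}\bigl(z,\underleftarrow{\underline{\dcentcat{A}}}(\mathbb{I}_s,w)\bigr) \cong \dcentcat{A}\bigl(\pi(z,\mathbb{I}_s),w\bigr).
$$
The remark following Lemma \ref{unitobjidentity} identifies the $\dcentcat{A}_s$-action of $\dcentcat{A}$ on itself with $\otimes_s$, so $\pi(z,\mathbb{I}_s)\cong z\otimes_s \mathbb{I}_s\cong z$ since $\mathbb{I}_s$ is the unit for $\otimes_s$. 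By the enriched Yoneda lemma this gives a natural isomorphism $\underleftarrow{\underline{\dcentcat{A}}}(\mathbb{I}_s,w)\cong w$ in $\dcentcat{A}$, and specialising to $w=\underleftarrow{\underline{\cat{C}}}(c,c')$ yields the desired identification.

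Composing these two isomorphisms gives the claim. I do not expect a real obstacle: the argument is entirely formal, and the only point deserving attention is that each step is an isomorphism in $\dcentcat{A}$ (i.e. respects the half-braidings), which holds because $\mathbf{J}$ is a $\dcentcat{A}_s$-enriched functor and the isomorphism from Proposition \ref{zatensoring} is by construction a $\dcentcat{A}$-morphism. Naturality in $(c,c')$ follows because $\mathbf{J}_{\underleftarrow{\underline{\cat{C}}}}$ is a natural component and the Yoneda isomorphism above is natural in $w$.
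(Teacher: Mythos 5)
Your proof is correct and follows essentially the same route as the paper: both sides are identified with $\underleftarrow{\underline{\cat{C}}}(c,c')$ via their Yoneda embeddings, the right-hand side through Equation \eqref{enrichafterde} (which is exactly what your appeal to $\mathbf{J}_{\underleftarrow{\underline{\cat{C}}}}$ unwinds to) and the left-hand side through the definition of $\underleftarrow{\underline{\dcentcat{A}}}$, which you usefully make explicit via Proposition \ref{zatensoring} and the identification of the self-tensoring of $\dcentcat{A}$ with $\otimes_s$. The only nitpick is that the observation that this action agrees with $\otimes_s$ appears in the remark following Proposition \ref{zatensoring}, not the one following Lemma \ref{unitobjidentity}.
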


\begin{proof}
	Using Equation \eqref{enrichafterde}, it suffices to observe
	\begin{equation}\label{doublede}
	\dcentcat{A}(z, \underleftarrow{\underline{\dcentcat{A}}}(\mathbb{I}_s, \underleftarrow{\underline{\cat{C}}}(c,c')))\cong \dcentcat{A}(z,\underleftarrow{\underline{\cat{C}}}(c,c')),
	\end{equation}
	which follows from the definition of $\underleftarrow{\underline{\dcentcat{A}}}$.
\end{proof}

Observe that all functors involved in the definition of the associators are the identity on objects, so it will suffice to consider what happens on morphisms. The lemma gives an alternative expression for the associators:

\begin{lem}\label{assonhom}
	Let $\cat{C},\cat{D},\cat{E}\in \BTCA$ and, for $i=1,2$, let $c_i,d_i,e_i$ be objects of the respective categories. We adopt the shorthand $\cat{C}(c_1,c_2)=\cat{C}_{12}$, with the obvious extension to the other categories, their $\dcentcat{A}$-enriched versions and their products. We will further suppress $\otimes_s$ from the notation for this lemma. Note that the action of the associators on the hom-objects between $c_i,d_i$ and $e_i$ is a morphism
	$$
	A_{\cat{C},\cat{D},\cat{E}}^{12}\colon \dcentcat{A}(\mathbb{I}_s, \underleftarrow{\underline{\DE(\underleftarrow{\underline{\cat{C}}}\underleftarrow{\underline{\cat{D}}})}}_{12}\underleftarrow{\underline{\cat{E}}}_{12})\rar \dcentcat{A}(\mathbb{I}_s, \underleftarrow{\underline{\cat{C}}}_{12}\underleftarrow{\underline{\DE(\underleftarrow{\underline{\cat{D}}}_{12}\underleftarrow{\underline{\cat{E}}}_{12})}}).
	$$
	This morphism is equal to the composite
	\begin{align*}
	 \dcentcat{A}(\mathbb{I}_s, \underleftarrow{\underline{\DE(\underleftarrow{\underline{\cat{C}}}\underleftarrow{\underline{\cat{D}}})}}_{12}\underleftarrow{\underline{\cat{E}}}_{12})&\cong \dcentcat{A}(\mathbb{I}_s, \underleftarrow{\underline{\dcentcat{A}}}(\mathbb{I}_s,\underleftarrow{\underline{\cat{C}}}_{12}\underleftarrow{\underline{\cat{D}}}_{12})\underleftarrow{\underline{\cat{E}}}_{12})\\
	 &\cong \dcentcat{A}(\mathbb{I}_s, (\underleftarrow{\underline{\cat{C}}}_{12}\underleftarrow{\underline{\cat{D}}}_{12})\underleftarrow{\underline{\cat{E}}}_{12})\\
	 &\xrightarrow{\alpha^s} \dcentcat{A}(\mathbb{I}_s, \underleftarrow{\underline{\cat{C}}}_{12}(\underleftarrow{\underline{\cat{D}}}_{12}\underleftarrow{\underline{\cat{E}}}_{12}))\\
	 &\cong \dcentcat{A}(\mathbb{I}_s, \underleftarrow{\underline{\cat{C}}}_{12}\underleftarrow{\underline{\DE(\underleftarrow{\underline{\cat{D}}}_{12}\underleftarrow{\underline{\cat{E}}}_{12})}}),
	\end{align*}
	where the first isomorphism is Lemma \ref{DEaltdesc}, the second Equation \eqref{aintohomiso} combined with the definition of $\underleftarrow{\underline{\dcentcat{A}}}$, and the final isomorphism is the inverse of the corresponding isomorphisms.
\end{lem}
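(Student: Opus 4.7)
The plan is to unpack each of the four functors appearing in the definition of $A_{\cat{C},\cat{D},\cat{E}}$ at the level of hom-objects and to check that the resulting composite coincides term-by-term with the claimed one. Since every functor in sight acts as the identity on objects, it suffices to analyse the action on morphism spaces, and by the co-Yoneda lemma it is enough to give the effect after applying $\dcentcat{A}(\mathbb{I}_s, -)$. The two inputs from the theory that do the real work are: the action of $\DE$ on hom-objects, which is by definition $\dcentcat{A}(\mathbb{I}_s,-)$; and the fact that $\cattens{s}$ on hom-objects is $\otimes_s$. Combined with the observation that $\tilde{\alpha}$ was constructed in \cite{Wasserman2017b} precisely by applying $\alpha^s$ on hom-objects, step 3 of the definition translates on the nose into the $\alpha^s$-arrow of the claimed composite.

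The core of the argument is then to identify the images of $\mathbf{J}_{\underleftarrow{\underline{\cat{C}}}\cattens{s}\underleftarrow{\underline{\cat{D}}}}$ and $\mathbf{J}^{-1}_{\underleftarrow{\underline{\cat{D}}}\cattens{s}\underleftarrow{\underline{\cat{E}}}}$ on hom-objects with the first and last isomorphisms of the claimed composite, respectively. For this I first invoke Lemma \ref{DEaltdesc} to rewrite
$$
\underleftarrow{\underline{\DE(\underleftarrow{\underline{\cat{C}}}\cattens{s}\underleftarrow{\underline{\cat{D}}})}}_{12} \;\cong\; \underleftarrow{\underline{\dcentcat{A}}}\bigl(\mathbb{I}_s,\,\underleftarrow{\underline{\cat{C}}}_{12}\otimes_s\underleftarrow{\underline{\cat{D}}}_{12}\bigr),
$$
and then, applying $\dcentcat{A}(\mathbb{I}_s, -\otimes_s \underleftarrow{\underline{\cat{E}}}_{12})$, the defining property of $\underleftarrow{\underline{\dcentcat{A}}}$ (equivalently \eqref{doublede}) collapses the enriched-hom into an honest object of $\dcentcat{A}$. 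Tracing through the construction of $\mathbf{J}_\cat{K}$ in the proof of Theorem \ref{bfcazaxbtev}, the $\mathbf{J}$-step on hom-objects is exactly the adjunction chain from \eqref{enrichafterde} with $\cat{K}=\underleftarrow{\underline{\cat{C}}}\cattens{s}\underleftarrow{\underline{\cat{D}}}$ and $z=\mathbb{I}_s$; this is the composite of the Lemma \ref{DEaltdesc}-isomorphism with the Lemma \ref{ZXaintohom}/unit collapse. The analogous statement for $\mathbf{J}^{-1}_{\underleftarrow{\underline{\cat{D}}}\cattens{s}\underleftarrow{\underline{\cat{E}}}}$ gives the fourth isomorphism of the claimed composite, in reverse.

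Assembling these identifications in order, the four arrows of the claimed composite match the four arrows defining $A_{\cat{C},\cat{D},\cat{E}}$ on hom-objects, concluding the proof. The only potentially delicate point is the bookkeeping showing that the two-step chain coming from $\mathbf{J}$ really is the Lemma \ref{DEaltdesc} isomorphism followed by the $\underleftarrow{\underline{\dcentcat{A}}}(\mathbb{I}_s,-)$ collapse and not some other composite involving hidden braidings; this is where care with the symmetric tensor product $\otimes_s$ and the conventions of Definition \ref{halfbraidequip} (applied with $\cat{C}=\dcentcat{A}$) is required. Once the canonical nature of both $\mathbf{J}$ and of the isomorphism in Lemma \ref{DEaltdesc} is spelled out via co-Yoneda, however, both chains are characterised by the same universal property, and so must agree.
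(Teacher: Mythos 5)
Your proposal is correct and follows exactly the route the paper intends: the paper in fact gives no separate proof of this lemma, treating it as an unwinding of the definitions of $\DE$, $\cattens{s}$, $\tilde{\alpha}$ and the equivalences $\mathbf{J}$ on hom-objects, together with Lemma \ref{DEaltdesc} and Equation \eqref{enrichafterde} --- which is precisely the identification you carry out.
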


We are now in a position to verify the pentagon equations.

\begin{prop}
	The associators $A$ satisfy the pentagon equations on the nose. 
\end{prop}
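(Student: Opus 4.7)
The plan is to reduce the pentagon equation for the associators $A$ to the pentagon equation for $\alpha^s$, the associator of $\otimes_s$ on $\dcentcat{A}$, which holds by virtue of $(\dcentcat{A},\otimes_s)$ being a symmetric monoidal category (as established in \cite{Wasserman2017}).

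First, I would observe that every ingredient in the definition of $A_{\cat{C},\cat{D},\cat{E}}$ (the de-enrichment $\DE$, the re-enrichment $\underleftarrow{\underline{(-)}}$, the product $\cattens{s}$, and the comparison equivalences $\mathbf{J}_{(-)}$) acts as the identity on objects. Consequently, for any four categories $\cat{C}_1,\cat{C}_2,\cat{C}_3,\cat{C}_4 \in \BTCA$, both composites around the pentagon act as the identity on objects of $\cat{C}_1 \rt \cat{C}_2 \rt \cat{C}_3 \rt \cat{C}_4$. It therefore suffices to verify commutativity of the pentagon at the level of hom-spaces, where it becomes an equality of linear maps.

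Next, by Lemma \ref{assonhom}, the action of each associator $A$ on hom-spaces is, up to conjugation by the canonical isomorphisms supplied by Lemma \ref{DEaltdesc}, Lemma \ref{ZXaintohom}, and the definition of $\underleftarrow{\underline{\dcentcat{A}}}$, given by pushforward along $\alpha^s$. All five edges of the pentagon of $A$'s can be rewritten in this manner. The plan is to show that when one assembles the five corresponding diagrams, the conjugating canonical isomorphisms cancel out along the interior of the pentagon, so that the outer boundary collapses to the pentagon for $\alpha^s$ applied to the four hom-objects $\underleftarrow{\underline{\cat{C}_i}}(c_i,c_i')$ inside $\dcentcat{A}_s$. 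Since $(\dcentcat{A},\otimes_s)$ is symmetric monoidal, this latter pentagon commutes on the nose, finishing the argument.

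The main obstacle will be the cancellation of the conjugating canonical isomorphisms. Concretely, the isomorphism in Lemma \ref{DEaltdesc} and its analogues are defined by representability (via Equation \eqref{doublede} and Lemma \ref{ZXaintohom}), and they commute with $\otimes_s$ on hom-objects because the tensor product of hom-objects is itself defined via the universal property of $\underleftarrow{\underline{(-)}}$. I would verify this compatibility by chasing a single element: given a morphism $z \to \underleftarrow{\underline{\cat{C}_i}}(c_i,c_i')$ for each $i$, the images under the two routes around each interior cell of the pentagon both correspond, under the various representability identifications, to the same $\dcentcat{A}_s$-morphism built from the $\otimes_s$-tensor of the four original morphisms. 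Once this naturality/compatibility is established, the pentagon for $A$ reduces literally to the pentagon for $\alpha^s$ and the statement follows.
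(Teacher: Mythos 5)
Your proposal is correct and follows essentially the same route as the paper: reduce to hom-objects since all functors involved are the identity on objects, rewrite each edge of the pentagon via Lemma \ref{assonhom} as a conjugate of $\alpha^s$, and observe that the assembled cells reduce the outer pentagon to the image under $\dcentcat{A}(\mathbb{I}_s,-)$ of the pentagon for $\alpha^s$, with the interior cells commuting by definition and by naturality of $\alpha^s$. The compatibility check you flag as the main obstacle is exactly what the paper disposes of by noting that adjacent edges' squares share a vertical edge and each square commutes for those reasons.
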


\begin{proof}
	Let $\cat{C},\cat{D},\cat{E}\in \BTCA$ and, for $i=1,2$, let $c_i,d_i,e_i$ be objects of the respective categories, and adapt the notation from Lemma \ref{assonhom}. It is enough to check the pentagon equation on hom-objects. Using Lemma \ref{assonhom} we get, for each edge of the pentagon, a commutative square, which we will spell out for one edge
	\begin{center}
		\begin{tikzcd}
		(((\cat{C}\rt \cat{D})\rt \cat{E})\rt \cat{F})_{12} \arrow[rr,"A_{\cat{C}\rt \cat{D},\cat{E},\cat{F}}"]\arrow[d,"\cong"] &\quad& ((\cat{C}\rt \cat{D})\rt (\cat{E}\rt \cat{F}))_{12} \arrow[d, "\cong"]\\
		\dcentcat{A}(\mathbb{I}_s, ((\underleftarrow{\underline{\cat{C}\rt \cat{D}}}_{12})\underleftarrow{\underline{\cat{E}}}_{12})\underleftarrow{\underline{\cat{F}}}_{12}) \arrow[rr, "(\alpha^s_{\underleftarrow{\underline{\cat{C}\rt \cat{D}}}_{12},\underleftarrow{\underline{\cat{E}}}_{12},\underleftarrow{\underline{\cat{F}}}_{12}})"] \arrow[d," \mathbf{J}_{\underleftarrow{\underline{\cat{C}}}_{12}\underleftarrow{\underline{\cat{D}}}_{12}}"]&\quad&	\dcentcat{A}(\mathbb{I}_s, (\underleftarrow{\underline{\cat{C}\rt \cat{D}}}_{12})(\underleftarrow{\underline{\cat{E}}}_{12}\underleftarrow{\underline{\cat{F}}}_{12})) \arrow[d," \mathbf{J}_{\underleftarrow{\underline{\cat{C}}}_{12}\underleftarrow{\underline{\cat{D}}}_{12}}"]\\
		\dcentcat{A}(\mathbb{I}_s, ((\underleftarrow{\underline{\cat{C}}}_{12} \underleftarrow{\underline{\cat{D}}}_{12})\underleftarrow{\underline{\cat{E}}}_{12})\underleftarrow{\underline{\cat{F}}}_{12})\arrow[rr, "(\alpha^s_{\underleftarrow{\underline{\cat{C}}}_{12} \underleftarrow{\underline{\cat{D}}}_{12},\underleftarrow{\underline{\cat{E}}}_{12},\underleftarrow{\underline{\cat{F}}}_{12}})"] &\quad& \dcentcat{A}(\mathbb{I}_s, (\underleftarrow{\underline{\cat{C}}}_{12} \underleftarrow{\underline{\cat{D}}}_{12})(\underleftarrow{\underline{\cat{E}}}_{12}\underleftarrow{\underline{\cat{F}}}_{12})),
		\end{tikzcd}
	\end{center}
	where the top square is commutative by definition, while the bottom square commutes by naturality of $\alpha^s$. The squares associated in this way to adjacent edges in the pentagon share a vertical edge, so we get a diagram that looks schematically like
	$$
	\begin{tikzpicture}
	\node (o1) at (-1.5,0){$\cdot$};
	\node (ro1) at (1.5,0){$\cdot$};
	\node (i1) at (-0.8,0){$\cdot$};
	\node (ri1) at (0.8,0){$\cdot$};
	
	\node(o2) at (-1,1){$\cdot$};
	\node(ro2) at (1,1){$\cdot$};
	\node(i2) at (-0.5,0.5){$\cdot$};
	\node(ri2) at (0.5,0.5){$\cdot$};
	
	\node (o3) at (0,-1){$\cdot$};
	\node (i3) at (0,-0.5){$\cdot$};
	
	\draw (o1) to (i1);
	\draw (o2) to (i2);
	\draw (o3) to (i3);
	\draw (ro1) to (ri1);
	\draw (ro2) to (ri2);
	
	\draw (o1) to (o2);
	\draw (o2) to (ro2);
	\draw (ro2) to (ro1);
	\draw (ro1) to (o3);
	\draw (o3) to (o1);

	\draw (i1) to (i2);
	\draw (i2) to (ri2);
	\draw (ri2) to (ri1);
	\draw (ri1) to (i3);
	\draw (i3) to (i1);
	
	\end{tikzpicture},
	$$
	where the squares correspond to the outside of the one constructed above, the outer pentagon is the pentagon we want to show commutes, and the inner pentagon is the image under $\dcentcat{A}(\mathbb{I}_s,-)$ of a pentagon equation for $\alpha^s$. All these faces commute, so we conclude that the outer pentagon commutes.
\end{proof}

\subsubsection{The Unit for the Reduced Tensor Product}

\begin{prop}\label{rtunit}
	Let $\cat{C}\in \BTCA$. Then we have an equivalence
	$$
	\dcentcat{A}\rt\cat{C}\cong \cat{C},
	$$
	given by the image of the unitor for $\cattens{s}$, as defined in Definition \ref{enrichedcartprod}. We similarly have a right unitor, and together these satisfy the triangle equality.
\end{prop}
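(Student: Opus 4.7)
The plan is to unfold the definition of $\rt$ and reduce the claim to the analogous unit property of $\cattens{s}$ on $\ZAXBT$. By definition,
$$
\dcentcat{A}\rt\cat{C} \;=\; \DE\bigl(\underleftarrow{\underline{\dcentcat{A}}}\cattens{s}\underleftarrow{\underline{\cat{C}}}\bigr),
$$
so if $\underleftarrow{\underline{\dcentcat{A}}}$ can be identified with (or shown equivalent to) the unit $\mathbb{I}_{\cattens{s}}$ for the enriched cartesian product, then the unitor $\lambda$ of Lemma \ref{ZXunitenrcart} produces an equivalence $\underleftarrow{\underline{\dcentcat{A}}}\cattens{s}\underleftarrow{\underline{\cat{C}}}\xrightarrow{\lambda}\underleftarrow{\underline{\cat{C}}}$ in $\ZAXBT$. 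Applying $\DE$ and then the equivalence $\mathbf{H}_\cat{C}\colon \DE(\underleftarrow{\underline{\cat{C}}})\xrightarrow{\cong}\cat{C}$ furnished by Theorem \ref{bfcazaxbtev} then assembles into the required equivalence
$$
\DE\bigl(\underleftarrow{\underline{\dcentcat{A}}}\cattens{s}\underleftarrow{\underline{\cat{C}}}\bigr) \xrightarrow{\DE(\lambda)} \DE(\underleftarrow{\underline{\cat{C}}}) \xrightarrow{\mathbf{H}_\cat{C}} \cat{C}
$$
in $\BTCA$.

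To identify $\underleftarrow{\underline{\dcentcat{A}}}$ as the $\cattens{s}$-unit, I would use Equation \eqref{doublede} in the proof of Lemma \ref{DEaltdesc}, which shows that
$$
\dcentcat{A}\bigl(z,\underleftarrow{\underline{\dcentcat{A}}}(\mathbb{I}_s,\underleftarrow{\underline{\cat{C}}}(c,c'))\bigr)\cong\dcentcat{A}\bigl(z,\underleftarrow{\underline{\cat{C}}}(c,c')\bigr).
$$
By the Yoneda lemma this means the hom-object $\underleftarrow{\underline{\dcentcat{A}}}(\mathbb{I}_s,-)$ acts as the identity on hom-objects of $\underleftarrow{\underline{\cat{C}}}$, which is exactly the defining property of the unit for $\cattens{s}$. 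The unit functor $\mathbb{I}_\dcentcat{A}\colon\cat{A}_\cat{Z}\rar\underleftarrow{\underline{\dcentcat{A}}}$ from Lemma \ref{unitforZAenrich}, together with Lemma \ref{unitobjidentity}, provides the required compatibility with the monoidal and braided structure (note that $\dcentcat{A}$ itself carries the braided structure from $\otimes_c$, while the enrichment is over $\otimes_s$, and these are laxly compatible by \cite{Wasserman2017a}).

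The right unitor is constructed analogously using the right unitor for $\cattens{s}$ on the right factor, and the triangle equality reduces to the triangle equality for the associator $\tilde{\alpha}$ and unitors of $\cattens{s}$. Concretely, the associator $A_{\cat{C},\dcentcat{A},\cat{D}}$ on $\cat{C}\rt(\dcentcat{A}\rt\cat{D})$ is, by construction, assembled from $\tilde{\alpha}_{\underleftarrow{\underline{\cat{C}}},\underleftarrow{\underline{\dcentcat{A}}},\underleftarrow{\underline{\cat{D}}}}$ and the natural transformations $\mathbf{J}_{(-)}$ of Theorem \ref{bfcazaxbtev}. The $\mathbf{J}$-pieces cancel in the triangle (they appear in inverse pairs between adjacent edges, exactly as in the proof of the pentagon just completed in the paper), leaving only the image under $\DE$ of the triangle for $\cattens{s}$, which holds by Lemma \ref{ZXunitenrcart}.

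The main obstacle I anticipate is the clean identification of $\underleftarrow{\underline{\dcentcat{A}}}$ with the unit $\mathbb{I}_{\cattens{s}}$ as $\dcentcat{A}$-crossed braided tensor category, not merely as a $\dcentcat{A}_s$-enriched category: one must check that the crossed tensor product on $\underleftarrow{\underline{\dcentcat{A}}}$ (coming from $\otimes_c$ on $\dcentcat{A}$ via Proposition \ref{cunderlliscrossedtensor}) and the braiding (from Proposition \ref{cunllbraid}) match the structure on the formal unit. This is where the interplay between $\otimes_c$ and $\otimes_s$ must be used essentially; once this identification is in hand, the remainder of the argument is a formal consequence of the $\cattens{s}$-unitor and Theorem \ref{bfcazaxbtev}.
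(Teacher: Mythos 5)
Your proposal is correct in outline and follows the same overall strategy as the paper: the equivalence is the image under $\DE$ of the $\cattens{s}$-unitor, composed with the identification $\DE(\underleftarrow{\underline{\cat{C}}})\cong\cat{C}$ from Theorem \ref{bfcazaxbtev}. The paper treats the identification of $\underleftarrow{\underline{\dcentcat{A}}}$ with the $\cattens{s}$-unit as already packaged into the cited Lemma \ref{ZXunitenrcart} (the key input being the observation, recorded after Proposition \ref{zatensoring}, that for $\cat{C}=(\dcentcat{A},\otimes_c)$ the $\dcentcat{A}_s$-tensoring agrees with $\otimes_s$), so it spends no time on what you flag as your ``main obstacle''; your Yoneda argument via Equation \eqref{doublede} is a reasonable way to make that identification explicit at the level of hom-objects. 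Where you genuinely diverge is the triangle equality: you reduce it to the triangle identity for $\cattens{s}$ in the companion paper by cancelling the $\mathbf{J}$-conjugations, mimicking the pentagon argument, whereas the paper verifies the triangle directly --- separately on objects and on hom-objects --- from the compatibility of the left and right $\dcentcat{A}_s$-tensorings on the enriched cartesian product (Proposition \ref{carttensored}). Your route is more uniform with the pentagon proof, but it leans on the companion lemma actually supplying the enriched triangle identity; the paper's route uses only the tensoring compatibility, which is precisely the input one would need to establish that enriched identity in the first place. Either way the argument goes through.
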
 

\begin{proof}
	It is clear that the functors defined above are equivalences, so we are left with checking the triangle equality. Recall that the unitor for $\cattens{s}$ is defined using the $\dcentcat{A}_s$-tensoring. On objects, the two routes along the triangle agree up to the natural isomorphism with components coming from Equation \eqref{tensoronprod}. On hom-objects, we observe that Equation \eqref{tensoronprod} implies
	$$
	\underleftarrow{\underline{\cat{C}}}\cattens{s}\underleftarrow{\underline{\cat{D}}}(z_1\cdot c_1\boxtimes d_1, z_2 \cdot c_2\boxtimes d_2)\cong\underleftarrow{\underline{\cat{C}}}\cattens{s}\underleftarrow{\underline{\cat{D}}}( c_1\boxtimes z_1\cdot d_1,c_2\boxtimes z_1\cdot d_2),
	$$
	so we are done.
\end{proof}

In summary:

\begin{thm}\label{rttheorem}
	The reduced tensor product $\rt$ defines a symmetric monoidal structure on the 2-category $\BTCA$.
\end{thm}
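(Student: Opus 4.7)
The bifunctor $\rt$ is already in hand, and the associators $A_{\cat{C},\cat{D},\cat{E}}$ together with the unit $\dcentcat{A}$ and unitors from Proposition \ref{rtunit} have been shown to satisfy the pentagon and triangle equations. What remains is to construct a symmetry $B_{\cat{C},\cat{D}} \colon \cat{C}\rt\cat{D}\rar\cat{D}\rt\cat{C}$, verify it is natural and involutive, and check the hexagon equations. The plan is to transport the symmetry of $\cattens{s}$ (which exists because $\otimes_s$ on $\dcentcat{A}$ is symmetric, c.f.\ \cite{Wasserman2017,Wasserman2017b}) along the equivalence of Theorem \ref{bfcazaxbtev}, exactly as the associators were obtained by transporting $\tilde{\alpha}$.

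Concretely, I would define $B_{\cat{C},\cat{D}}$ as the image under $\DE$ of the symmetry $\tilde{\sigma}_{\underleftarrow{\underline{\cat{C}}},\underleftarrow{\underline{\cat{D}}}}\colon\underleftarrow{\underline{\cat{C}}}\cattens{s}\underleftarrow{\underline{\cat{D}}}\rar\underleftarrow{\underline{\cat{D}}}\cattens{s}\underleftarrow{\underline{\cat{C}}}$ of the $\cattens{s}$-product. Both sides of $B_{\cat{C},\cat{D}}$ have the same underlying objects as $\cat{C}\times\cat{D}$, so only the action on hom-objects needs to be checked. Using the analogue of Lemma \ref{assonhom}, $B_{\cat{C},\cat{D}}$ acts on hom-objects between $(c_1,d_1)$ and $(c_2,d_2)$ as the map
\[
\dcentcat{A}(\mathbb{I}_s,\underleftarrow{\underline{\cat{C}}}(c_1,c_2)\otimes_s\underleftarrow{\underline{\cat{D}}}(d_1,d_2))\xrightarrow{\sigma^s_*}\dcentcat{A}(\mathbb{I}_s,\underleftarrow{\underline{\cat{D}}}(d_1,d_2)\otimes_s\underleftarrow{\underline{\cat{C}}}(c_1,c_2)),
\]
where $\sigma^s$ is the symmetry of $\otimes_s$. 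Naturality of $B_{\cat{C},\cat{D}}$ in $\cat{C}$ and $\cat{D}$ is inherited from naturality of $\tilde{\sigma}$, and involutivity ($B_{\cat{D},\cat{C}}\circ B_{\cat{C},\cat{D}}=\id$) follows from $\sigma^s\circ\sigma^s=\id$ in $\dcentcat{A}_s$.

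For the hexagon equations I would mimic the pentagon argument: for each edge of the hexagon, combine the definition of $A$ (in terms of $\alpha^s$ via Lemma \ref{assonhom}) and of $B$ (in terms of $\sigma^s$) with the intertwining isomorphism $\mathbf{J}$ from Theorem \ref{bfcazaxbtev} to produce a commutative square whose bottom edge is the corresponding edge of a hexagon for $(\alpha^s,\sigma^s)$ inside $\dcentcat{A}_s$. Gluing these squares along their shared vertical edges, one obtains a diagram with an outer hexagon (the one to be proved) and an inner hexagon (the image under $\dcentcat{A}(\mathbb{I}_s,-)$ of a hexagon equation for $\otimes_s$). Since all internal faces commute by naturality and the inner hexagon commutes because $\otimes_s$ is symmetric, the outer hexagon commutes on the nose.

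The main obstacle I foresee is not any single coherence equation but rather keeping track of the change-of-basis isomorphism $\mathbf{J}$ carefully enough to match the definitions on both sides of each hexagon edge; in particular one must verify that the unitor equivalence from Proposition \ref{rtunit} is compatible with $B_{\cat{C},\dcentcat{A}}$ (so that the symmetric unit axiom holds) and that the braiding acts as the identity when either factor is the unit. Both reduce, via Lemma \ref{DEaltdesc} and the adjunction in \eqref{doublede}, to the corresponding symmetric unit axioms for $\otimes_s$ established in \cite{Wasserman2017}. Once this bookkeeping is complete, the 2-categorical coherence data (invertible modifications in the pentagonator, hexagonator, and syllepsis) can all be taken to be identities because every auxiliary isomorphism we used ($\mathbf{J}$, $\mathbf{H}$, the unitor of Proposition \ref{rtunit}) is natural in all its variables.
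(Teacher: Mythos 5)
Your proposal is correct and follows essentially the same route as the paper: the paper states Theorem \ref{rttheorem} as a summary of the preceding results (bifunctoriality, the pentagon for $A$, and the triangle from Proposition \ref{rtunit}), with the symmetry understood to be transported from the symmetry of $\cattens{s}$ on $\ZAXBT$ along the equivalence of Theorem \ref{bfcazaxbtev}, exactly as you do. If anything, you supply more detail than the paper, which leaves the construction of $B_{\cat{C},\cat{D}}$ and the hexagon verification implicit; your square-gluing argument for the hexagons is the natural adaptation of the paper's own pentagon argument and is the intended completion.
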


\subsection{Basic properties of the reduced tensor product}
We will now establish some basic properties of the reduced tensor product, and compute it in some examples. In our computations, the following result will be used frequently:

\begin{prop}\label{tensofneutralpart}
	Let $\cat{K}$ and $\cat{L}$ be $\dcentcat{A}_s$-enriched and tensored categories. Then
	$$
	(\cat{K}\cattens{s}\cat{L})_\cat{A}\cong\cat{K}\cattens{s}\cat{L}_\cat{A}\cong\cat{K}_\cat{A}\cattens{s}\cat{L}\cong\overline{\cat{K}_\cat{A}}\cattens{\cat{A}}\overline{\cat{L}_\cat{A}},
	$$
	where we view the $\cat{A}$-enriched and tensored category on the right as $\dcentcat{A}_s$-enriched and tensored category by using the symmetric strong monoidal inclusion functor $\cat{A}\hookrightarrow \dcentcat{A}$.
\end{prop}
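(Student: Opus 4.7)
The plan is to produce each of the three equivalences separately, with the middle one ($\cat{K}\cattens{s}\cat{L}_\cat{A}\cong\cat{K}_\cat{A}\cattens{s}\cat{L}$) following by symmetry from either of the others, and then to verify the third equivalence by a change-of-basis argument. Throughout, I would use the key identification in the enriched cartesian product that comes from the $\dcentcat{A}_s$-tensoring, namely the canonical isomorphism $(z\cdot k)\boxtimes l \cong k\boxtimes (z\cdot l)$ in $\cat{K}\cattens{s}\cat{L}$, which is part of the universal property of $\cattens{s}$.

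First, for the equivalence $(\cat{K}\cattens{s}\cat{L})_\cat{A}\cong\cat{K}\cattens{s}\cat{L}_\cat{A}$, I would proceed as follows. There is an obvious functor $\cat{K}\cattens{s}\cat{L}_\cat{A}\to(\cat{K}\cattens{s}\cat{L})_\cat{A}$ because the hom-object $\cat{K}(k,k')\otimes_s\cat{L}(l,l')$ for $l,l'\in\cat{L}_\cat{A}$ can be analysed using the monoidality of the inclusion $\cat{A}\hookrightarrow\dcentcat{A}_s$: when the second factor lives in the image of $\cat{A}$, the symmetric tensor product $\otimes_s$ becomes the $\dcentcat{A}_s$-tensoring of $\cat{K}(k,k')$ by an object of $\cat{A}$, and by construction of the half-braidings on hom-objects (Definition \ref{halfbraidequip}), the result is again in the image of $\cat{A}$. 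For essential surjectivity, given a neutral object $k\boxtimes l$, the hom-functor $(k',l')\mapsto \cat{K}(k',k)\otimes_s\cat{L}(l',l)$ factors through $\cat{A}$; applying the $\dcentcat{A}_s$-tensoring trick to absorb the half-braiding data of $k$ into the $l$-factor using $(z\cdot k)\boxtimes l\cong k\boxtimes (z\cdot l)$, I expect to produce a representative with the $\cat{L}$-component neutral.

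Second, for the equivalence $\cat{K}_\cat{A}\cattens{s}\cat{L}\cong\overline{\cat{K}_\cat{A}}\cattens{\cat{A}}\overline{\cat{L}_\cat{A}}$ (combined with the first equivalence to get the full chain), the point is that on the neutral parts, both hom-objects of $\cat{K}_\cat{A}$ and of $\cat{L}_\cat{A}$ live in $\cat{A}\subset\dcentcat{A}_s$, so their $\otimes_s$-product is computed in the image of $\cat{A}$; hence $\otimes_s$ restricts to the ordinary tensor product on $\cat{A}$, and the $\dcentcat{A}_s$-enriched cartesian product restricts to the $\cat{A}$-enriched cartesian product after change of basis along the forgetful functor $\dcentcat{A}_s\to\cat{A}$. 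I would make this precise by invoking the compatibility between $\otimes_s$ and $\otimes_c$ and the fact that the composite $\cat{A}\hookrightarrow\dcentcat{A}_s\to\cat{A}$ is the identity, which was used in Corollary \ref{centralenrich}.

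The main obstacle I anticipate is the essential surjectivity step in the first equivalence: showing that an arbitrary neutral object in $\cat{K}\cattens{s}\cat{L}$ can, after applying the action/tensoring isomorphisms, be realised by an object with $\cat{L}$-component already in $\cat{L}_\cat{A}$. The cleanest route, which I would pursue, is to avoid manipulating objects directly and instead use Yoneda: check that the inclusion of $\cat{K}\cattens{s}\cat{L}_\cat{A}$ into $(\cat{K}\cattens{s}\cat{L})_\cat{A}$ is fully faithful on hom-objects (immediate from the definition of $\cattens{s}$ and the fact that the hom-objects are unchanged) and then argue essential surjectivity by constructing, for each neutral $k\boxtimes l$, an explicit isomorphism to an object with neutral second component, using the $\dcentcat{A}_s$-action to move the obstruction from one factor to the other. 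Since this step relies on a nontrivial manipulation of half-braidings under $\otimes_s$, I expect to invoke the results from \cite{Wasserman2017,Wasserman2017a} on the compatibility of $\otimes_s$ with $\otimes_c$ to complete the argument.
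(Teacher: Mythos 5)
First, a caveat about the comparison you asked for: this paper does not prove Proposition \ref{tensofneutralpart} at all --- it is imported from \cite{Wasserman2017b} (where it appears as a proposition of that paper), so there is no in-paper proof to measure your proposal against. Judged on its own terms, your plan does identify the right ingredients: the ideal-like behaviour of $\cat{A}\subset\dcentcat{A}_s$ under $\otimes_s$, the exchange isomorphism $(z\cdot k)\boxtimes l\cong k\boxtimes(z\cdot l)$ coming from the tensoring, the reduction of the middle equivalence to the first by symmetry of $\cattens{s}$, and the fact that $\cat{A}\hookrightarrow\dcentcat{A}_s$ is strong symmetric monoidal, so that on $\cat{A}$-valued hom-objects $\otimes_s$ restricts to $\otimes_\cat{A}$ and the change of basis along $\dcentcat{A}_s\rar\cat{A}$ does nothing.

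There are, however, genuine gaps. The central one is the step you yourself flag: essential surjectivity of $\cat{K}\cattens{s}\cat{L}_\cat{A}\hookrightarrow(\cat{K}\cattens{s}\cat{L})_\cat{A}$. You say you ``expect to produce a representative with the $\cat{L}$-component neutral'' by moving data across the $\boxtimes$, but you never say which object of $\dcentcat{A}$ gets moved, and in a general tensored $\cat{K}$ there is no distinguished base object $k_0$ with $k\cong z\cdot k_0$ that would let you strip $k$ down. Note that a neutral object $k\boxtimes l$ need not have $l\in\cat{L}_\cat{A}$ (take $k\in\cat{K}_\cat{A}$ and $l$ arbitrary), so producing the required isomorphism is a real construction that must use the sector/support decomposition of $\otimes_s$ from \cite{Wasserman2017} in an essential way; as written this step is a statement of intent rather than an argument. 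Two smaller points: your justification that $\cat{K}(k,k')\otimes_s\cat{L}(l,l')$ lands in $\cat{A}$ when $\cat{L}(l,l')\in\cat{A}$ appeals to Definition \ref{halfbraidequip}, which concerns the specific half-braidings on $\underleftarrow{\underline{\cat{C}}}$, whereas the proposition is about arbitrary $\dcentcat{A}_s$-enriched tensored categories --- the fact you actually need is the intrinsic one that $z\otimes_s a\in\cat{A}$ for all $z\in\dcentcat{A}$ and $a\in\cat{A}$. And in the final equivalence you treat $\cat{K}_\cat{A}\cattens{s}\cat{L}$ as if both factors were already neutral; you need one more application of the first equivalence in the other variable (using $(\cat{K}_\cat{A})_\cat{A}=\cat{K}_\cat{A}$) to reduce to $\cat{K}_\cat{A}\cattens{s}\cat{L}_\cat{A}$ before the change-of-basis argument applies.
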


This proposition is \cite[Proposition 18]{Wasserman2017b}.

\subsubsection{Reduced tensor product and the commutant of $\cat{A}$}
It is interesting to examine what the reduced tensor product becomes on the commutant (Definition \ref{commutantdef}) of $\cat{A}$ in $\cat{C}$. When taking the reduced tensor product, this commutant behaves nicely. We will use the following bit of notation:

\begin{notation}
	Let $\cat{C}$ and $\cat{D}$ be braided tensor categories containing $\cat{A}$. The symbol $\bxc{A}$, with slight abuse of notation, denotes
	$$
	\cat{C}\bxc{A}\cat{D}=\DE(\underleftarrow{\cat{C}}\bxc{A}\underleftarrow{\cat{D}}),
	$$
	where the use of $\bxc{A}$ on the right hand side denotes the $\cat{A}$-product introduced in Definition \ref{enrichedcartprod}, and $\DE$ denotes change of basis along $\cat{A}(\mathbb{I}_\cat{A},-)$.
\end{notation}

\begin{prop}\label{commutantrt}
	Let $\cat{C},\cat{D}\in \BTCA$. Then the commutant of $\cat{A}$ in $\cat{C}\rt\cat{D}$ satisfies
	$$
	\cat{Z}_2(\cat{A},\cat{C}\rt\cat{D})\cong \MC{A}{C}\bxc{A}\MC{A}{D}.
	$$
\end{prop}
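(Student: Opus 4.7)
The plan is to identify the commutant of $\cat{A}$ in $\cat{C}\rt\cat{D}$ with (the de-enrichment of) the neutral subcategory of $\underleftarrow{\underline{\cat{C}}}\cattens{s}\underleftarrow{\underline{\cat{D}}}$, and then use the fact that the neutral subcategory of a $\cattens{s}$-product factors.

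More concretely, I would proceed in the following steps. First, recall that by definition $\cat{C}\rt\cat{D}=\DE(\underleftarrow{\underline{\cat{C}}}\cattens{s}\underleftarrow{\underline{\cat{D}}})$, and Theorem~\ref{bfcazaxbtev} then gives a canonical equivalence $\underleftarrow{\underline{\cat{C}\rt\cat{D}}}\cong \underleftarrow{\underline{\cat{C}}}\cattens{s}\underleftarrow{\underline{\cat{D}}}$ of $\dcentcat{A}$-crossed braided tensor categories. Proposition~\ref{yonedacommutant} applied to $\cat{C}\rt\cat{D}$ identifies the full subcategory on the commutant of $\cat{A}$ with the neutral subcategory of its associated $\dcentcat{A}_s$-enrichment. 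Transporting along the above equivalence therefore reduces the problem to computing $(\underleftarrow{\underline{\cat{C}}}\cattens{s}\underleftarrow{\underline{\cat{D}}})_\cat{A}$ and then de-enriching the result back into $\BTCA$.

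Second, I would apply Proposition~\ref{tensofneutralpart} to obtain
\[
(\underleftarrow{\underline{\cat{C}}}\cattens{s}\underleftarrow{\underline{\cat{D}}})_\cat{A}
\cong \overline{(\underleftarrow{\underline{\cat{C}}})_\cat{A}}\cattens{\cat{A}}\overline{(\underleftarrow{\underline{\cat{D}}})_\cat{A}}.
\]
Another application of Proposition~\ref{yonedacommutant}, now to $\cat{C}$ and $\cat{D}$ themselves, identifies $(\underleftarrow{\underline{\cat{C}}})_\cat{A}=\underleftarrow{\underline{\MC{A}{C}}}$ and likewise for $\cat{D}$. Since $\MC{A}{C}=\MC{A}{C}$ is its own commutant with respect to $\cat{A}$ (in particular its $\dcentcat{A}_s$-enrichment is purely $\cat{A}$-enriched), Corollary~\ref{centralenrich} yields $\overline{\underleftarrow{\underline{\MC{A}{C}}}}=\underleftarrow{\MC{A}{C}}$, and similarly for $\cat{D}$. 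Combining these three identifications gives
\[
(\underleftarrow{\underline{\cat{C}}}\cattens{s}\underleftarrow{\underline{\cat{D}}})_\cat{A}
\cong \underleftarrow{\MC{A}{C}}\cattens{\cat{A}}\underleftarrow{\MC{A}{D}}.
\]

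Finally, I would de-enrich both sides along $\cat{A}(\mathbb{I}_\cat{A},-)$: on the left hand side this recovers the commutant $\cat{Z}_2(\cat{A},\cat{C}\rt\cat{D})$ as a full subcategory of $\DE(\underleftarrow{\underline{\cat{C}}}\cattens{s}\underleftarrow{\underline{\cat{D}}})=\cat{C}\rt\cat{D}$, while on the right hand side, by the very definition of the notation $\bxc{A}$ and because change of basis along $\dcentcat{A}_s\rar\cat{A}\rar\Vect$ agrees with $\cat{A}(\mathbb{I}_\cat{A},-)$ on categories enriched purely over $\cat{A}$, we obtain $\MC{A}{C}\bxc{A}\MC{A}{D}$. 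Checking that this identification is compatible with the braided tensor structure uses that the forgetful functor $\dcentcat{A}_s\to \cat{A}$ is lax monoidal and that the braiding on the commutant is induced from the one on $\cat{C}\rt\cat{D}$, both of which are immediate from the constructions in Section~\ref{enrichingsection}. The main subtlety, and the only step requiring care, is the bookkeeping around the passage between the $\dcentcat{A}_s$- and $\cat{A}$-enrichments of the commutant and the identification of the two notions of ``tensor product over $\cat{A}$'' ($\cattens{\cat{A}}$ of $\cat{A}$-enriched categories versus $\bxc{A}$ of braided tensor categories), which is handled by Corollary~\ref{centralenrich}.
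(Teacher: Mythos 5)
Your proposal is correct and follows essentially the same route as the paper: the paper's proof is the one-line observation that the claim follows from Proposition \ref{yonedacommutant} combined with Proposition \ref{tensofneutralpart}, and your argument is simply a careful unpacking of that combination (including the use of Corollary \ref{centralenrich} to mediate between the $\dcentcat{A}_s$- and $\cat{A}$-enrichments, which the paper leaves implicit but uses in exactly this way in the subsequent examples).
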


\begin{proof}
	Using Proposition \ref{yonedacommutant}, this follows directly from \cite[Proposition 18]{Wasserman2017b}.
\end{proof}

\subsubsection{Examples}
To give the reader some intuition for the reduced tensor product, we compute some examples.

\begin{ex}
	Let $\cat{C}$ be a braided tensor category containing a symmetric fusion category $\cat{A}$. Then the reduced tensor product over $\cat{A}$ of $\cat{C}$ with $\cat{A}$ is given by
	$$
	\cat{C}\rt \cat{A} \cong \cat{Z}_2(\cat{A},\cat{C})\bxc{A}\cat{A}\cong \cat{Z}_2(\cat{A},\cat{C}).
	$$
	To see this, we observe that the neutral subcategory of $\cat{A}$ enriched over itself is all of $\underleftarrow{\underline{\cat{A}}}$. Now apply Proposition \cite[Proposition 18]{Wasserman2017b} to get
	$$
	\cat{C}\rt \cat{A} \cong \DE(\underleftarrow{\underline{\cat{C}}}_\cat{A}\cattens{s}\underleftarrow{\underline{\cat{A}}})\cong \DE(\underleftarrow{\MC{A}{C}}\cattens{\cat{A}}\underleftarrow{\cat{A}})\cong\MC{A}{C}.
	$$
	Here, we have used Corollary \ref{centralenrich} and that $\underleftarrow{\cat{A}}$ is the unit for $\bxc{A}$.
\end{ex}

\begin{ex}
	Let $\cat{C}$ and $\cat{D}$ be braided tensor categories containing $\cat{A}$, and assume that $\cat{D}=\MC{A}{D}$. Then
	$$
	\cat{C}\rt\cat{D}\cong \MC{A}{C}\bxc{A}\cat{D}.
	$$
	The assumption on $\cat{D}$ means that we have $\underleftarrow{\underline{\cat{D}}}_\cat{A}=\underleftarrow{\underline{\cat{D}}}$, by Proposition \ref{yonedacommutant}. Using Proposition \ref{tensofneutralpart} and Corollary \ref{centralenrich}, we get:
	$$
	\cat{C}\rt\cat{D}\cong\DE(\underleftarrow{\underline{\cat{C}}}_\cat{A}\cattens{s}\underleftarrow{\underline{\cat{D}}})\cong \DE(\underleftarrow{\MC{A}{C}}\bxc{A}\underleftarrow{\cat{C}})\cong \MC{A}{C}\bxc{A}\cat{D}.
	$$
\end{ex}

\begin{ex}
	Let $\cat{C}$ and $\cat{D}$ be braided tensor categories containing $\cat{A}$, and assume that $\cat{C}=\MC{A}{C}$ and that $\cat{D}=\MC{A}{D}$. Then
	$$
	\cat{C}\rt\cat{D}\cong \cat{C}\bxc{A}\cat{D}.
	$$
	The assumption on $\cat{C}$ and $\cat{D}$ means that we have $\underleftarrow{\underline{\cat{C}}}_\cat{A}=\underleftarrow{\underline{\cat{C}}}$ and $\underleftarrow{\underline{\cat{D}}}_\cat{A}=\underleftarrow{\underline{\cat{D}}}$, by Proposition \ref{yonedacommutant}. Using Proposition \ref{tensofneutralpart} and Corollary \ref{centralenrich}, we get:
	\begin{align*}
	\cat{C}\rt\cat{D}&\cong \DE(\underleftarrow{\underline{\cat{C}}}\cattens{s}\underleftarrow{\underline{\cat{D}}})\\
	&\cong \DE(\underleftarrow{\cat{C}}\bxc{A}\underleftarrow{\cat{D}})\\
	&\cong \cat{C}\bxc{A}\cat{D}.
	\end{align*}
\end{ex}

\subsubsection{Minimal modular extensions}
In this section we will show that the reduced tensor product between so called minimal modular extensions is again a minimal modular extension. We first recall the definition of a minimal modular extension.

\begin{df}[\cite{Mueger2002}]
	Let $\cat{C}\in \BTCA$, then a \emph{minimal modular extension of $\cat{C}$ over $\cat{A}$} is a braided tensor category $\cat{M}$ containing $\cat{C}$ with $\MC{M}{M}=\Vect$ and $\MC{A}{M}=\cat{C}$. The (possibly empty) set of minimal modular extensions of $\cat{C}$ over $\cat{A}$ will be denoted by $\MME{C}$.
\end{df}

\begin{rmk}
	In \cite{Mueger2002}, a minimal modular extension of $\cat{C}$ over $\cat{A}$ is defined to be a \emph{modular} tensor category $\cat{M}$ such that 
	$$
	\dim \cat{M}=  \dim \MC{C}{C}\cdot \dim \cat{C},
	$$
	wher $\dim \cat{M}$ denotes the global dimension of $\cat{M}$. Our notion of minimal modular extension agrees with this under the additional assumption that one works with ribbon categories over $\C$, rather than braided tensor categories. By \cite{Mueger2002} a pre-modular category $\cat{N}$ (over $\C$, or more generally with $\dim \cat{N}\neq 0$) is modular if and only if $\MC{N}{N}=\Vect$, and further
	$$
	\dim \cat{N}=\dim \cat{A} \cdot \dim \MC{A}{N}.
	$$
	Now suppose that $\cat{N}$ is a modular tensor category containing $\cat{C}$ and that $\cat{A}=\MC{C}{C}$. Observe that $\cat{C}\subset \MC{A}{N}$. So $\MC{A}{N}=\cat{C}$ is equivalent to
	$$
	\dim \cat{N}=  \dim \cat{A}\cdot \dim \cat{C}.
	$$
\end{rmk}

The reduced tensor product works particularly well with minimal modular extensions:

\begin{prop}
	Let $\cat{M}\in \MME{C}$ and $\cat{N}\in \MME{D}$ for $\cat{C},\cat{D}\in \BTCA$ with $\MC{A}{C}\cong \MC{A}{D}=\cat{A}$. Then $\cat{M}\rt \cat{N}\in \mme(\cat{C}\cattens{\cat{A}}\cat{D},\cat{A})$.
\end{prop}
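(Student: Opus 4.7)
The plan is to verify the two defining properties of a minimal modular extension for $\cat{M}\rt\cat{N}$: first, that its relative centralizer of $\cat{A}$ equals $\cat{C}\bxc{A}\cat{D}$, and second, that its M\"uger centre is trivial. The first property is immediate from Proposition \ref{commutantrt} applied to $\cat{M}$ and $\cat{N}$: we get $\MC{A}{M\rt N}\cong\MC{A}{M}\bxc{A}\MC{A}{N}=\cat{C}\bxc{A}\cat{D}$, since by hypothesis $\MC{A}{M}=\cat{C}$ and $\MC{A}{N}=\cat{D}$. So the bulk of the work is in establishing modularity.

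For modularity, I would proceed as follows. Any transparent simple object $X$ of $\cat{M}\rt\cat{N}$ must in particular double-braid trivially with every object of $\cat{A}$, hence lies in $\MC{A}{M\rt N}=\cat{C}\bxc{A}\cat{D}$. By Proposition \ref{tensofneutralpart}, this subcategory is identified with $\mathbf{DeEnrich}(\underleftarrow{\cat{C}}\bxc{A}\underleftarrow{\cat{D}})$ sitting inside $\DE(\underleftarrow{\underline{\cat{M}}}\cattens{s}\underleftarrow{\underline{\cat{N}}})$, whose simple objects are direct summands of objects of the form $m\boxtimes n$ with $m$ a simple of $\cat{C}$ and $n$ a simple of $\cat{D}$. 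Since transparency is inherited by direct summands, it suffices to establish triviality for $X=m\boxtimes n$ of this form.

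Next, I would exploit the fact that objects of $\cat{M}$ and of $\cat{N}$ give rise to objects of $\cat{M}\rt\cat{N}$ via the unit functors paired with the inclusions $\cat{A}\subset \cat{N}$ and $\cat{A}\subset\cat{M}$: concretely, for $m'\in \cat{M}$ the object $m'\boxtimes \mathbb{I}_\cat{N}$ lives in $\cat{M}\rt\cat{N}$ (using Lemma \ref{unitobjidentity} to recognise the unit-tensoring as the identity), and similarly on the other side. By construction, the $\dcentcat{A}$-crossed braiding on $\underleftarrow{\underline{\cat{M}}}\cattens{s}\underleftarrow{\underline{\cat{N}}}$ is assembled from the individual braidings on the two factors, so the induced braiding of $m\boxtimes n$ with $m'\boxtimes \mathbb{I}_\cat{N}$ in $\cat{M}\rt\cat{N}$ reduces, after de-enrichment, to $\beta^\cat{M}_{m,m'}\otimes \id_n$, and similarly with $\mathbb{I}_\cat{M}\boxtimes n'$ on the other factor. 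Requiring $X$ to be transparent therefore forces $m\in\MC{M}{M}=\Vect$ and $n\in\MC{N}{N}=\Vect$, whence $X$ is a multiple of $\mathbb{I}$.

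The main obstacle I anticipate is the careful bookkeeping needed to verify that the $\dcentcat{A}$-crossed braiding on $\underleftarrow{\underline{\cat{M}}}\cattens{s}\underleftarrow{\underline{\cat{N}}}$ really does decompose in this component-wise fashion when one of the factors is the unit, and that this decomposition survives de-enrichment to yield the asserted separation of braidings in $\cat{M}\rt\cat{N}$. This amounts to unpacking Proposition \ref{cunllbraid} together with the explicit form of the unitors for $\cattens{s}$ from Proposition \ref{rtunit}, and checking that the half-braidings introduced on hom-objects (Definition \ref{halfbraidequip}) collapse correctly when one argument is the monoidal unit. Once this is in place, the reduction to M\"uger centres of $\cat{M}$ and $\cat{N}$ is straightforward, and combined with step one gives $\cat{M}\rt\cat{N}\in \mme(\cat{C}\bxc{A}\cat{D},\cat{A})$.
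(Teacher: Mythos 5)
Your first step, identifying $\cat{Z}_2(\cat{A},\cat{M}\rt\cat{N})$ with $\cat{C}\bxc{A}\cat{D}$ via Proposition \ref{commutantrt}, coincides with the paper's. The modularity argument, however, has a genuine gap at the reduction ``since transparency is inherited by direct summands, it suffices to establish triviality for $X=m\boxtimes n$''. Transparency passes \emph{down} to direct summands, not up from them: knowing that one simple summand $X$ of $m\boxtimes n$ is transparent tells you nothing about the monodromy of the whole object $m\boxtimes n$, so you cannot run your componentwise-braiding computation on $m\boxtimes n$ and conclude $m\in\cat{Z}_2(\cat{M},\cat{M})$. Worse, in $\cat{C}\bxc{A}\cat{D}$ the same simple object occurs as a summand of many different products (e.g.\ of $m\boxtimes n$ and of $(m\otimes a)\boxtimes (a^*\otimes n)$ for $a\in\cat{A}$), so ``the $m$ with $X\subseteq m\boxtimes n$'' is not even well defined, and the intended conclusion ``$m$ is transparent in $\cat{M}$'' does not make sense for a general transparent simple $X$.

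The paper circumvents exactly this difficulty by a different route: it first applies M\"uger's double commutant theorem to compute $\cat{Z}_2(\cat{Z}_2(\cat{A},\cat{M}\rt\cat{N}),\cat{M}\rt\cat{N})=\cat{A}$ (using that the braiding is componentwise and that $\cat{Z}_2(\cat{Z}_2(\cat{A},\cat{M}),\cat{M})=\cat{A}$ and likewise for $\cat{N}$), which traps the M\"uger centre of $\cat{M}\rt\cat{N}$ inside $\cat{A}$; it then rules out non-unit transparent objects $a$ of $\cat{A}$ by observing that transparency of $a$ is detected by whether the non-$\cat{A}$ objects of $\cat{Z}_s(\langle a\rangle,\dcentcat{A})$ annihilate $\cat{M}\rt\cat{N}$ under the $\dcentcat{A}_s$-tensoring, which modularity of $\cat{M}$ and $\cat{N}$ forbids. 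If you want to keep your more direct strategy, you would need to argue about the transparent subcategory as a whole (a fusion subcategory of $\cat{C}\bxc{A}\cat{D}$ closed under duals and under tensoring by $\cat{A}$) rather than about individual summands of $m\boxtimes n$; the double commutant theorem is precisely the tool that packages this.
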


\begin{proof}
	We observe that $\MC{\cat{A}}{\cat{M}\rt \cat{N}}=\cat{C}\cattens{\cat{A}}\cat{D}$ is immediate from Proposition \ref{commutantrt}. This leaves showing that $\cat{M}\rt \cat{N}$ has $\Vect$ as its subcategory of transparent objects. To see this, observe that by the double commutant theorem \cite{Mueger2002}, we have that 
	$$\cat{Z}_2(\MC{A}{M},\cat{M})=\MC{A}{A}=\cat{A}$$ and $$\cat{Z}_2(\MC{A}{N},\cat{N})=\MC{A}{A}=\cat{A}.$$ We then have
	\begin{align*}
		\cat{Z}_2(\cat{Z}_2(\cat{A},\cat{M}\rt \cat{N}),\cat{M}\rt \cat{N})&=\cat{Z}_2(\cat{Z}_2(\cat{A},\cat{M})\cattens{\cat{A}}\MC{A}{N},\cat{M}\rt \cat{N})\\&=\cat{A}\cattens{\cat{A}}\cat{A}=\cat{A},
	\end{align*}
	as the braiding on $\cat{M}\rt \cat{N}$ is componentwise. We further have that $$\cat{Z}_2(\cat{Z}_2(\cat{A},\cat{M}\rt \cat{N}))\supset \cat{Z}_(\cat{M}\rt\cat{N},\cat{M}\rt\cat{N}),$$ so we are left with establishing which objects $a$ in $\cat{A}$ are transparent for all of $\cat{M}\rt\cat{N}$. This is detected by the $\dcentcat{A}_s$-tensoring restricted to the non-$\cat{A}$ objects of the subcategory $\cat{Z}_s(\langle a \rangle,\dcentcat{A})$, where $\langle a \rangle$ denotes the subcategory spanned by $a$: the object $a$ is transparent if and only if these objects annihilate $\cat{M}\rt \cat{N}$. As, by modularity, no such set of objects of $\dcentcat{A}$ annihilates $\cat{M}$ or $\cat{N}$, and therefore no non-unit objects of $\cat{A}$ are transparent in $\cat{M}\rt\cat{N}$, and we conclude that $\cat{M}\rt \cat{N}$ is modular.
\end{proof}

\begin{rmk}
	We observe that the reduced tensor product gives $\MME{A}$ the structure of an abelian group. For the case $\cat{A}=\Rep(G)$ this abelian group was, in \cite{Lan2016}, identified with the group $H^3(G,U(1))$, and the reduced tensor product corresponds to the pairing given there between sets of minimal modular extensions. The advantage of the approach given here is that the constructions are done purely in terms of the modular structure of the categories involved, using only the braidings and the fusion rules.
\end{rmk}

\end{document}